\theoremstyle{thmstyleone}%
\newtheorem{theorem}{Theorem}% 
\newtheorem{remark}{Remark}%
\newtheorem{defn}{Definition}%
\newtheorem{thm}{Theorem}
\newtheorem{assm}{Assumption}[section]
\newtheorem{lem}[theorem]{Lemma}
\newtheorem{cor}[theorem]{Corollary}
\numberwithin{theorem}{section} 
\newcommand{\bx}{{\bf x}}
\newcommand{\R}{\mathbb{R}}
\newcommand{\bn}{{\bf n}}
\newcommand{\bg}{{\bf g}}
\newcommand{\sU}{{\mathscr{U}}}
\newcommand{\s}{\color{black}}
\newcommand{\cred}{\color{red}}
\newcommand{\sB}{\mathcal B}
\newcommand{\sD}{\mathscr D}
\newcommand{\Tr}{\text{Tr}}
\newcommand{\bQ}{{\bf Q}}
\newcommand{\sC}{\mathcal C}
\newcommand{\sO}{\mathcal O}
\newcommand{\sS}{\mathcal S}
\newcommand{\sY}{\mathcal Y}
\newcommand{\ep}{\varepsilon}
\newcommand{\sL}{\mathscr{L}}
\newcommand{\sF}{\mathcal F}
\newcommand{\bu}{{\bf u}}
\newcommand{\bq}{{\bf q}}
\newcommand{\bP}{{\mathbb P}}
\newcommand{\bw}{{\bf w}}
\newcommand{\bW}{{\bf W}}
\newcommand{\br}{{\bf r}}
\newcommand{\bE}{{\mathbb E}}
\newcommand{\bD}{{\bf D}}
\newcommand{\bH}{{\bf H}}
\newcommand{\bC}{{\bf C}}
\newcommand{\bU}{{\bf U}}
\newcommand{\bL}{{\bf L}}
\newcommand{\sW}{\mathscr{W}}
\newcommand{\sV}{\mathscr{V}}
\newcommand{\bv}{{\bf v}}
\newcommand{\bfeta}{\boldsymbol{\eta}}
\newcommand{\bfpsi}{\boldsymbol{\psi}}
\newcommand{\bfphi}{\boldsymbol{\phi}}
\newcommand{\bfvarphi}{\boldsymbol{\varphi}}
\renewcommand{\tilde}{\widetilde}
\title[Stochastic moving boundary FSI problem]{A 2D stochastic fluid-structure interaction problem in compliant arteries with non-zero longitudinal displacement}
	\author[K. Tawri]{Krutika Tawri$^{1}$}
\address{\newline	$^1$ Department of Mathematics, University of California Berkeley, CA, USA.}
\email{ktawri@berkeley.edu }
\begin{document}

\maketitle

% REQUIRED
\begin{abstract}
In this paper, we study a nonlinear fluid-structure interaction problem driven by a multiplicative, white-in-time noise. The problem consists of the  Navier-Stokes equations describing the flow of an incompressible, viscous fluid in a 2D cylinder interacting with an elastic wall whose elastodynamics is described by a membrane/shell equation. The stochastic force is applied both to the fluid equations as a volumetric body force, and to the structure as an external forcing to the deformable fluid boundary. The fluid and the structure are nonlinearly coupled via the	kinematic and dynamic conditions assumed at the moving interface, which is a random variable not known a priori. In particular, we consider the case where the structure is allowed to have non-zero longitudinal displacement.\\

\noindent Keywords: Stochastic moving boundary problems, Fluid-structure interaction, martingale solutions\\
\noindent MSC: 60H15, 35A01
\end{abstract}

\section{Introduction}

This paper introduces a constructive approach for investigating martingale solutions to a nonlinear stochastic problem describing the interaction between a deformable elastic membrane and a two-dimensional viscous, incompressible fluid, both subject to multiplicative stochastic forces. The fluid equations are given by the 2D Navier-Stokes equations and the elastodynamics of the structure is given by membrane/ shell equations. The fluid and the structure are fully coupled across the  moving interface via a two-way coupling that ensures continuity of their velocities and contact forces at the fluid-structure interface. This two-way coupling gives rise to a strong geometric nonlinearity in the problem, since the location of the fluid domain is not known a priori and is one of the unknowns in
the problem.

The main result of this paper is the establishment of the existence of weak martingale solutions to this highly nonlinear stochastic fluid-structure interaction, moving boundary problem via an operator splitting scheme. The recent article \cite{TC23}, provided the existence of weak martingale solutions to a stochastic FSI problem where the structure displacement is assumed only in the radial (vertical) direction. In this paper we remove this restriction and consider a thin structure which is allowed to be displaced longitudinally. %The article \cite{TC23} also concerns with the no-slip boundary conditions imposed at the fluid-structure interface. While this is a common assumption in the blood flow literature (see e.g. \cite{QTV00},\cite{CTGMHR06}), the Navier-slip condition is considered to be a more realistic boundary condition in modeling near the closure of heart valves (see e.g. \cite{NP10}, \cite{MC16}). Hence In the present work, along with the no-slip boundary conditions, we also explore the case of Navier-slip boundary conditions at the fluid-structure interface. 
%In \cite{TC23}, it was possible to find the pathwise Arbitrary Lagrangian-Eulerian (ALE) mappings (that map the moving fluid domains onto a fixed domain) explicitly, which is not possible in the present case.
Due
to the non-zero longitudinal displacement of the structure, extra care has to be taken to deal with the fluid domains degeneracies, i.e. when the structure touches a part of the fluid domain boundary during its deformation.
% The incompressibility condition poses more challenges in terms of the construction of test functions, which depend on the random fluid domain configurations. %This problem arises,  among other places, in the application of the Skorohod representation theorem which is used to obtain almost sure convergence results.
%Additional nonlinearities appear in the weak formulation of the problem that track the geometric quantities such as the fluid-structure interface tangent and normal, and the change in the moving fluid domain measure (given by the Jacobian of the Arbitrary Lagrangian Eulerian (ALE) mapping).

There has been a lot of work done in the field of deterministic FSI in the past two decades (see e.g. \cite{CDEG, G08,KT12, MC13, GH16} and the references therein). However, even though there is evidence pointing to the need for studying their stochastic perturbations \cite{StochasticHeartFSI}, the mathematical theory of stochastic FSI or, more generally, of stochastic PDEs on randomly moving domains is undeveloped. 
Moreover, in most of the deterministic FSI literature, the structure is allowed to be displaced
only in the radial direction. To the best of our knowledge only \cite{MCG20} and \cite{KSS23} have studied the existence of weak solutions in the more general, unrestricted structure displacement case and, this present work is the {\bf first existence result} involving unrestricted structure displacement for a {\bf stochastic} moving boundary problem.

The coupled problem is discretized in time and split into a structure and a fluid subproblem using a Lie operator splitting scheme. 
The mathematical issues that we come across while building the splitting scheme are related to the following: (1) the fluid domain boundary is a random variable, not known a priori and can possibly degenerate in a random fashion and, (2)
the incompressibility condition and the kinematic coupling conditions require the test functions to be random. The first issue here is handled by introducing an appropriate cut-off function whereas the second issue is handled via a penalization method.

We use the Arbitrary Lagrangian Eulerian (ALE) mapping approach to transform the fluid equations from the time-dependent domain to a fixed domain.  The use of the ALE mappings and the analysis that follows is valid if there is no loss of injectivity of the ALE transformation.
While similar issues arise in the deterministic case as well, their resolution in the stochastic case is markably different. We can not use the ALE maps introduced in \cite{MCG20} since they would further enforce the dependence of the test functions on the domain configurations. Unlike the incompressibility, we can not penalize the boundary behavior of the fluid and structure velocities either since we rely on obtaining partial regularity for the structure velocity, as the trace of the fluid velocity on the interface, from the fluid viscous dissipation. This is required to obtain convergence of the structure velocity in strong topology to pass to the limit in the weak formulation, in particular in the stochastic integral.
Moreover, handling injectivity of the ALE maps in the stochastic case requires a different approach, which in this manuscript is done using a cut-off function and a stopping time argument. The cut-off function artificially provides a {\it deterministic} lower bound on the Jacobian of the ALE maps and a {\it deterministic} upper bound on the $H^s$-norm (large enough $s<2$) of the structure displacement. Using this cut-off function, artificial structure displacement variables are introduced in a way that still provides us with a stable scheme. A stopping time argument is then developed to 
show that this cut-off function does not ``kick in'' until some stopping time which is strictly positive almost surely.

Furthermore, the incompressibility condition requires that we construct test functions that are random due to their dependence on the domain configurations via the ALE mappings. This creates challenges as we apply, for example, the Skorohod representation theorem and move to a new probability space to upgrade to almost sure convergence of the approximate solutions. Hence we introduce a system that approximates the original system by augmenting it by a singular term that penalizes the divergence of the approximate solutions. However, addition of this penalty term and the low temporal regularity of the solutions create further difficulties in establishing compactness which we overcome by employing non-standard compactness arguments in Section \ref{sec:tight}.  We then show that the solutions to the approximate systems indeed converge to a desired martingale solution of the limiting equations.

Finally, since the stochastic forcing appears not only in the structure equations but also in the fluid equations themselves, we come across additional difficulties, which are associated with the construction of the 
appropriate "test processes" on the approximate and limiting moving fluid domains.  Namely, along with the required divergence-free property on these domains, the test functions also have to satisfy appropriate boundary conditions and measurability properties. We construct these approximate test functions by first constructing a Carath\'eodory function that gives the definition of a test function for the limiting equations
and then by transforming it on the approximate domains in a way such that the desired properties are preserved.

%\section{Main results}
\section{Problem setup}\label{sec:det_setup}
We begin describing the problem by first considering the deterministic model.

	\subsection{The deterministic model and a weak formulation}
We consider the flow of an incompressible, viscous fluid in a two-dimensional compliant cylinder $\sO=(0,L)\times(0,1)$ with a deformable lateral boundary $\Gamma$.
	The left and the right boundary of the cylinder are the inlet  and outlet for the time-dependent fluid flow. 
	We assume ``axial symmetry'' of the data and of the flow, rendering the central horizontal line as the axis of symmetry.	This allows us to  consider the flow only in the upper half of the domain, with the bottom boundary fixed and equipped with the symmetry boundary conditions. 

Assume that the time-dependent fluid domain, whose displacement is not known a priori is denoted by 	$\sO_{{\bfeta}}(t)={\bfvarphi}(t,\sO)$ whereas its deformable interface is given by $\Gamma_{\bfeta}(t)={\bfvarphi}(t,\Gamma)$. Assume that ${\bfvarphi}:\sO\rightarrow\sO_{{\bfeta}}$ is a $C^1$ diffeomorphism such that
$${\bfvarphi}|_{\Gamma_{in},\Gamma_{out},\Gamma_{b}}=id,\quad det\nabla{\bfvarphi}( t,\bx)>0,$$
where the left, right and bottom boundaries of $\sO$ are given by $\Gamma_{\text{in}}=\{0\}\times (0,1),\Gamma_{\text{out}}=\{L\}\times (0,1),\Gamma_b=(0,L)\times \{0\}$ respectively.
The displacement of the elastic structure at the top lateral boundary $\Gamma$, which can be identified by $(0,L)$, will be given by ${\bfeta}(t,z)={\bfvarphi}(t,z)-(z,1)$ for $z\in (0,L)$ (see Fig. \ref{fig}). The mapping ${{\bfeta}}:[0,L]\times[0,T] \rightarrow \R^2$ such that ${\bfeta}=({\eta}_z(z,t),{\eta}_r(z,t))$ is one of the unknowns in the problem.

\begin{figure}[h]\centering	\includegraphics[scale=0.8]{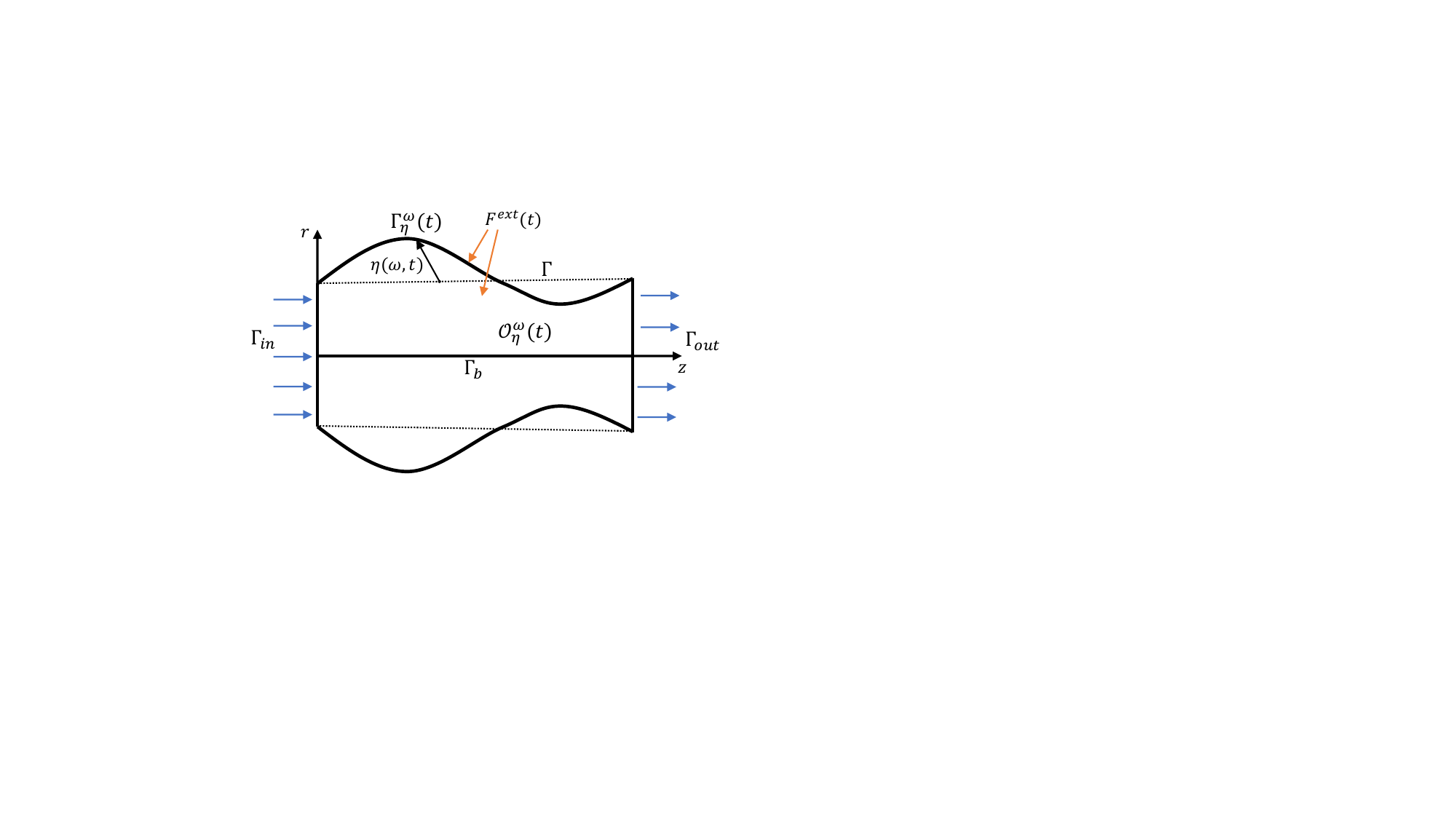}
	\caption{A realization of the domain}\label{fig}
\end{figure}

%Denote 	$$H_f(t)=L^2(\Omega_{\bfeta}),$$	
\noindent{\bf The fluid subproblem:}		
The fluid flow is modeled by the incompressible Navier-Stokes equations in the 2D time-dependent domain $\sO_{{\bfeta}}(t)$:  %or possibly Ladyzenskaya-Smagorinsky:
\begin{equation}
	\begin{split}\label{u}
		\partial_t\bu + (\bu\cdot \nabla)\bu &= \nabla \cdot \sigma + F^{ext}_u\\
		\nabla \cdot \bu&=0,
	\end{split}
\end{equation}
where $\bu=(u_z,u_r)$ is the fluid velocity. Hereon, subscripts with $z$ and $r$ will denote the first and the second components respectively. The Cauchy stress tensor is $\sigma=-p I+2\nu \bD(\bu)$ where $p$ is the fluid pressure, $\nu$ is the kinematic viscosity coefficient and $\bD(\bu)=\frac12(\nabla\bu+(\nabla\bu)^T)$ is the symmetrized gradient. Here $F_u^{ext}$ represents any external forcing impacting the fluid.	{\s In this work we will be assuming that this force is random, as we shall see below.} The fluid flow is driven by dynamic pressure data given at the inlet and the outlet boundaries as follows:
\begin{equation}
	\begin{split}\label{bc:in-out}
		p +\frac12 |\bu|^2&=P_{in/out}(t),\\
		u_r&=0 \quad\text{ on } \Gamma_{in/out}.
\end{split}	\end{equation}
Whereas on the bottom boundary $\Gamma_b$ we prescribe the symmetry boundary condition:
\begin{align}
	u_r=\partial_r u_z=0 \quad\text {on } \Gamma_b.\label{bc:bottom}
\end{align}		 
{\bf The structure sub-problem:} The elastodynamics problem is given in terms of the displacement
${\bfeta}=(\eta_z,\eta_r)$ with respect to $\Gamma$:
\begin{align}\label{eta}
	\partial^2_{t}{\bfeta} +\sL_e({\bfeta}) = F_{\bfeta} \quad \text{ in } (0,L),
\end{align}
where $F_{{\bfeta}}$ is the total force experienced by the structure and $\sL_e$ is a continuous, self-adjoint, coercive, linear operator on $\bH^2_0(0,L)$.	
The above equation is supplemented with the following boundary conditions:
\begin{equation}\label{bc:eta}
	\begin{split}
		{\bfeta}(0)={\bfeta}(L)=\partial_z{\bfeta}(0)=\partial_z{\bfeta}(L)=0.
		%u_r=0 \quad\text{ on } \Gamma_{\text{in/out}}\\
		%u_r=\partial_ru_z=0 \quad\text{ on }\Gamma_b
	\end{split}
\end{equation}
{\bf The non-linear fluid-structure coupling}: The coupling between the fluid and the structure takes place {\s across the current location of the fluid-structure interface, which is simply the current location of the membrane/shell, described above. %We consider a two-way coupling described by }the kinematic and dynamic coupling conditions that describe continuity of velocity and continuity of normal stress at the fluid-structure interface, respectively:
\begin{itemize}
	\item The kinematic coupling conditions are:
	\begin{align}\label{kinbc1}
		\partial_t {\bfeta}(t,z) =\bu({\bfvarphi}(t,z)),\quad (t,z)\in[0,T]\times [0,L].
	\end{align}
	\item
	The dynamic coupling condition is:
	\begin{align}
		F_{\bfeta}=-S_{\bfeta}(t,z) (\sigma {\bf n}^{\bfeta})|_{(t,z,{\bfvarphi}(t,z))} + F_{\bfeta}^{ext},
	\end{align}
	where $\bn^{\bfeta}(t,z)$ is the unit outward normal to the top boundary at the point, and $S_{\bfeta}(t,z)$ is the Jacobian of the transformation from Eulerian to Lagrangian coordinates. As earlier, $F^{ext}_{\bfeta}$ denotes any external force impacting the structure.
\end{itemize}
This system is supplemented with the following initial conditions:
\begin{align}\label{ic}
	\bu(t=0)=\bu_0,{\bfeta}(t=0)={\bfeta}_0, \partial_t{\bfeta}(t=0)=\bv_0.
\end{align}

\subsection*{Weak formulation on moving domain}
Before we derive the weak formulation of the deterministic system described in the previous sub-section, we define the following relevant function spaces
{\s for the fluid velocity, the structure, and the coupled FSI problem}:
\begin{align*}
	&\tilde\sV_F(t)= \{{\bf u}=(u_z,u_r)\in \bH^{1}(\sO_{{\bfeta}}(t)) :
	\nabla \cdot {\bf u}=0,  u_r=0
	\text{ on }\partial \sO_{{\bfeta}}\setminus \Gamma_{{\bfeta}}(t)\}\nonumber\\
	&\tilde\sW_F(0,T)=L^\infty(0,T;\bL^2(\sO_{\bfeta}(\cdot))) \cap L^2(0,T;\tilde\sV_F(\cdot))\\
	&\tilde\sW_S(0,T)=W^{1,\infty}(0,T;\bL^2(0,L)) \cap L^\infty(0,T;\bH^2_0(0,L))\\
	&\tilde\sW(0,T)=\{(\bu,{\bfeta})\in \tilde\sW_F(0,T)\times\tilde\sW_S(0,T):{ \bu({\bfvarphi}(t,z))}=\partial_t{\bfeta}(t,z), (t,z)\in (0,T)\times\Gamma\}.
\end{align*} 
We use the convention that bold-faced letters denote spaces containing vector-valued functions.
{\s Next, we derive} a weak formulation of the problem on the moving domains. We consider $\bq \in C^1([0,T];\tilde\sV_F(\cdot))$ such that ${\bq}({\bfvarphi}(t,z))=\bfpsi(t,z)$ on $(0,T)\times \Gamma_{}$ for some $\bfpsi \in C^1([0,T];\bH^2_0(\Gamma))$. We multiply \eqref{u} by $\bq$, integrate in time and space and use Reynold's transport theorem to obtain,
\begin{align*}
	&(\bu(t), \bq(t))_{\sO_{{\bfeta}}(t)}= (\bu(0), \bq(0))_{\sO_{{\bfeta}}(0)} +\int_0^t\int_{\sO_{{\bfeta}}(s)} \bu(s)\cdot\partial_s\bq(s) \\
	&+ \int_0^t\int_{\Gamma_{{\bfeta}}(s)}(\bu(s)\cdot\bq(s))(\bu(s)\cdot \bn^{\bfeta}(s))
	-\int_0^t\int_{\sO_{{\bfeta}}(s)}(\bu(s)\cdot \nabla )\bu(s) \bq(s)\\
	&- 2\nu\int_0^t \int_{\sO_{{\bfeta}}(s)}  \bD(\bu(s))\cdot\bD(\bq(s))  ds+\int_0^t\int_{\partial\sO_{{\bfeta}}(s)} (\sigma \bn^{\bfeta}(s))\cdot \bq(s)+\int_0^t\int_{\sO_{{\bfeta}}(s)}F^{ext}_u(s)\bq(s).
\end{align*}
Let,
$$b(t,\bu,{\bf v},{\bf w}):=\frac12\int_{\sO_{{\bfeta}}(t)}\left( (\bu\cdot\nabla){\bf v}\cdot{\bf w}-(\bu\cdot\nabla){\bf w}\cdot{\bf v}\right)  .$$
With a slight abuse of notation, we let $\bn^{\bfeta}$ denote the unit normal to $\partial\sO_{\bfeta}$ in the following calculation:
\begin{align*}
	-((\bu\cdot\nabla)\bu,\bq)_{\sO_{{\bfeta}}}&=-\frac12((\bu\cdot\nabla)\bu,\bq)_{\sO_{{\bfeta}}}+\frac12((\bu\cdot\nabla)\bq,\bu)_{\sO_{{\bfeta}}} -\frac12\int_{\partial\sO_{{\bfeta}}}\bu\cdot\bq \bu\cdot \bn^{\bfeta} \\
	&=-b(s,\bu,\bu,\bq) -\frac12\int_{\Gamma_{{\bfeta}}}\bu\cdot\bq\bu\cdot \bn^{\bfeta}+ \frac12\int_{\Gamma_{in}}|u_z|^2q_z-\frac12\int_{\Gamma_{out}}|u_z|^2q_z.
\end{align*}
Next we consider the structure equation \eqref{eta}. We multiply \eqref{eta} by $\bfpsi$ and integrate in time and space to obtain
\begin{align*}
	(\partial_t{\bfeta}(t),\bfpsi(t))&=(\bv_0,\bfpsi(0) +\int_0^t\int_0^L\partial_s{\bfeta}\cdot\partial_s\bfpsi dzds - \int_0^t\langle \sL_e({\bfeta}),\bfpsi \rangle ds  \\
	&-\int_0^t\int_0^LS_{\bfeta}\sigma \bn^{\bfeta} \cdot\bfpsi dzds +\int_0^t \int_0^LF_{\bfeta}^{ext}\cdot\bfpsi dz ds.
\end{align*}
Hence, to summarize, for any test function $\bQ=(\bq,\bfpsi)$ we look for a solution $(\bu,{\bfeta}) \in \tilde\sW(0,T)$,  that satisfies the following equation for {almost} every $t \in [0,T]$:
\begin{equation}
	\begin{split}\label{origweakform}
		&{\int_{\sO_{{\bfeta}(t )}}\bu(t )\bq(t ) d\bx+\int_0^L\partial_t{\bfeta}(t)\bfpsi(t )dz}-\int_0^{t }\int_{\sO_{{\bfeta}(s)}}\bu\cdot\partial_s\bq d\bx ds\\%-\int_0^{t}\int_{\Gamma_{\bfeta}}(\bu\cdot\bq)(\bw\cdot\bn^{\bfeta}) d\Gamma_{\bfeta} ds\\
		&+\int_0^{t} b(s,\bu,\bu,\bq)ds -\frac12\int_0^{t}\int_{\Gamma_{\bfeta}}(\bu\cdot\bq)(\bu\cdot\bn^{\bfeta}) dSds+ 2\nu\int_0^{t} \int_{\sO_{{\bfeta}(s)}} \bD(\bu)\cdot \bD(\bq) d\bx ds\\
			&-\int_0^{t}\int_0^L\partial_s{\bfeta}\partial_s\bfpsi dzds +\int_0^{t } \langle\sL_e({\bfeta}),\bfpsi \rangle ds
		=\int_{\sO_{{\bfeta}_0}}\bu_0\bq(0) d\bx+ \int_{0}^L \bv_0\bfpsi(0) dz\\
&	+\int_0^{t}P_{{in}}\int_{0}^1q_z\Big|_{z=0}drds-\int_0^tP_{{out}}\int_{0}^1q_z\Big|_{z=1}drds \\
&	 +\int_0^{t}\int_{\sO_{{\bfeta}}(s)}\bq \cdot  F_{u}^{ext}\, d\bx ds 
		+\int_0^{t}\int_0^L \bfpsi\cdot  F_{{\bfeta}}^{ext}\, dz ds,
\end{split}\end{equation}
where $F_{u}^{ext}$ is the volumetric external force and $F_{{\bfeta}}^{ext}$ is the external force applied to the deformable boundary, 
chosen to be random in nature, as we explain next. 
%where $F_{ext}$ represents the total external force experienced by the fluid-structure system as a whole, which, as explained in the subsequent sections, is chosen to be random in nature. Before explaining the structure of the noise and the assumptions on the noise coefficient we give a basic framework for our stochastic analysis.
\subsection{The stochastic framework and definition of martingale solutions}
We will assume that the external forces $F_u^{ext}$ and ${\s F_{{\bfeta}}^{ext}}$ are multiplicative stochastic forces and  that we can then write the combined stochastic forcing $F^{ext}$ as follows:
\begin{equation}\label{StochasticForcing}
	{\s F^{ext}} := G(\bu,\bv,{\bfeta}) {dW},
\end{equation}
where $\bu$ is the fluid velocity, $\bv$ is the structure velocity, ${\bfeta}$ is the structure displacement, and $W$ is a Wiener process. 

We will now give the relevant probabilistic framework. 	The stochastic noise term is
defined on a given filtered probability space $(\Omega,\sF,(\sF_t)_{t \geq 0},\bP)$ that satisfies the usual
assumptions, i.e., $\sF_0$ is complete and the filtration is right continuous, that is, $\sF_{t}=\cap_{s \geq t}\sF_s$ for all $t \geq 0$. 
We assume that $W$ is a $U$-valued Wiener process with respect to the filtration $(\sF_t)_{t \geq 0}$, where $U$ is a separable Hilbert space. We denote by $Q$ 
the covariance operator of $W$, which is a positive, trace class operator on $U$, 
and define $U_0:=Q^{\frac12}(U)$.

Introducing the notation $\bL^2:=\bL^2(\sO)\times \bL^2(0,L)$ we now give assumptions on the noise coefficient $G$:
{\s \begin{assm}\label{G}
		Let $L_2(X,Y)$ denote the space of Hilbert-Schmidt operators from a Hilbert space $X$ to another Hilbert space $Y$. The {\bf noise coefficient} $G$ is a function $G:\sU\times \bH^s(0,L) \rightarrow L_2(U_0;\bL^2)$, for any $\frac32<s<2$, such that the following assumptions hold:
		\begin{equation}\begin{split}\label{growthG}
				&\|G(\bu,\bv,{\bfeta})\|_{L_2(U_0;\bL^2)} \leq \|{\bfeta}\|_{\bH^s(0,L)}\|{\bu}\|_{\bL^2(\sO)} + \|\bv\|_{\bL^2(0,L)},\\
				&\|G(\bu_1,\bv_1,{\bfeta})-G(\bu_2,\bv_2,{\bfeta})\|_{L_2(U_0;\bL^2)} \leq \|{\bfeta}\|_{\bH^s(0,L)}\|{\bu_1}-{\bu_2 }\|_{\bL^2(\sO)} + \|\bv_1-\bv_2\|_{\bL^2(0,L)},\\
				&\|G(\bu,\bv,{\bfeta}_1)-G(\bu,\bv,{\bfeta}_2)\|_{L_2(U_0;\bL^2)} \leq \|{\bfeta}_1-{\bfeta}_2\|_{\bH^s(0,L)}\|\bu\|_{\bL^2(\sO)}.
		\end{split}\end{equation}	
	\end{assm}
	
	We will use this framework below to define a martingale solution to our stochastic FSI problem.
	In order to do this, we first transform the problem defined on the moving domains onto a fixed reference domain using
	a family of Arbitrary Lagrangian-Eulerian (ALE) mappings. The mappings are defined next.
	
	\subsubsection{ALE mappings}\label{sec:ale}
	To deal the geometric nonlinearity arising due to the motion of the fluid domain, we consider the arbitrary Lagrangian-Eulerian (ALE) mappings which are a family of diffeomorphisms from the fixed domain $\sO=(0,L)\times(0,1)$ onto {the moving domain}  $
	\sO_{{\bfeta}}(t)$. Notice that the presence of the stochastic forcing implies that the domains $\sO_{\bfeta}$ are themselves random and that we must consider the ALE mappings defined {\it pathwise}, that is, for every $\omega\in \Omega$ we will consider the maps $
	A_{{\bfeta}}^\omega(t):\sO \rightarrow \sO_{{\bfeta}}(t,\omega)
	$ such that $A^\omega_{\bfeta}(t)=\textbf{id}+{\bfeta}(t,\omega) \text{ on } \Gamma$ and $
	A^\omega_{\bfeta}(t)=\textbf{id}  \text{ on } \partial\sO\setminus\Gamma.$
	
	\iffalse
	Note that ${\bfeta}$ will provide boundary values for the ALE maps on $\Gamma$ whereas the boundaries $\partial\sO\setminus\Gamma$ will be mapped to themselves. For its values in the interior of $\sO$, any reasonable extension can be chosen. We choose to work with the following: For a given, sufficiently smooth ${\bfeta}$, any $\omega\in\Omega$ and $t\in[0,T]$, $A_{\bfeta}^\omega(t)$ is the solution of
	\begin{align*}
		\Delta A^\omega_{\bfeta}(t)&=0, \quad \text{ in } \sO,\\
		A^\omega_{\bfeta}(t)&=\textbf{id}+{\bfeta}(t,\omega), \quad \text{ on } \Gamma,\\
		A^\omega_{\bfeta}(t)&=\textbf{id}, \quad \text{ on } \partial\sO\setminus\Gamma.
	\end{align*}
	\fi 
	Assuming the existence of such a map we will give the definition of martingale solutions (see Definition \ref{def:martingale}) on fixed domain by first introducing the required notations.
	Under this transformation, the pathwise transformed gradient and symmetrized gradient will be denoted by
	$$\nabla^{\bfeta}f=\nabla f(\nabla A_{\bfeta})^{-1},  \text{ and } \bD^{\bfeta}(\bu)=\frac12(\nabla^{\bfeta}\bu+(\nabla^{\bfeta})^T\bu). $$
	This gives us the definition of the transformed divergence as:
	$$	div^{\bfeta}f=tr(\nabla^{\bfeta}f).$$	
	The Jacobian of the ALE mapping is given by
	\begin{equation}\label{ALE_Jacobian}
		J^\omega_{\bfeta}(t)=\text{det }\nabla A_{{\bfeta}}^\omega(t).
	\end{equation}
	Using $\bw^{\bfeta}$ to denote the ALE velocity
	$\bw^{\bfeta}=\frac{d}{dt}A_{\bfeta}$, 
	we rewrite the advection term as follows:
	$$b^\eta(\bu,\bw^{\bfeta},\bq)=\frac12\int_{\sO}J_{{\bfeta}}\left(((\bu-\bw^{\bfeta})\cdot\nabla^{\bfeta} )\bu\cdot\bq-((\bu-\bw^{\bfeta})\cdot\nabla^{\bfeta} )\bq\cdot\bu\right). $$
	\subsubsection{Definition of martingale solutions} 
	{\s We start by introducing} the {\bf functional framework for the stochastic problem on the fixed reference domain} $\sO=(0,L) \times (0,1)$.  The following are the functions spaces for the stochastic FSI problem defined on the fixed domain $\sO$:
	\begin{align*}
		& V= \{{\bf u}=(u_z,u_r)\in {\bH}^{1}(\sO): 
	%	u_z|_{z=0}=u_z|_{z=L}, \text{ and } u_r=0 
	u_r=0 \text{ on }\partial \sO_{}\setminus \Gamma \},\\
		&\sW_F=L^2(\Omega;L^\infty(0,T;\bL^2(\sO))) \cap L^2(\Omega;L^2(0,T;V)),\\
		&\sW_S=L^2(\Omega;W^{1,\infty}(0,T;\bL^2(0,L)) \cap L^\infty(0,T;\bH^2_0(0,L))),\\
		&\sW(0,T)=\{(\bu,{\bfeta})\in \sW_F\times \sW_S:
		\bu(t)|_{\Gamma}=\partial_t{\bfeta}(t),\,\,\nabla^{\bfeta} \cdot {\bu}=0\, \,\bP-a.s.\}.
	\end{align*} 
	We define the following spaces for test functions and for fluid and structure velocities:
	\begin{equation}\label{Dspace}
		\sD=\{({\bf q},\bfpsi) \in  V \times \bH^2_0(0,L): {\bq}|_\Gamma=\bfpsi
		\},\,
		\sU=\{(\bu,\bv)\in V\times \bL^2(0,L): \bu|_{\Gamma}=\bv \}.
	\end{equation}
	
	\begin{defn}({\bf \s Martingale solution})\label{def:martingale}
		Given compatible deterministic initial fluid and structure velocity data,  $\bu_0 \in \bL^2(\sO)$, $\bv_0\in \bL^2(0,L)$ and compatible initial structure displacement ${\bfeta}_0\in \bH_0^2(0,L)$ 
			such that
			for some $\delta_1,\delta_2>0$,
			\begin{align}	 \label{etainitial}
				{\delta_1<\inf_{\sO}J_{{\bfeta}_0},\quad \text{ and }\quad \|{\bfeta}_0\|_{\bH^2_0(0,L)}<\frac1{\delta_2}},
		\end{align}
		we say that  $(\mathscr{S},\bu,{\bfeta},T^{\bfeta})$ is a  martingale solution to the system \eqref{u}-\eqref{ic} under the assumptions \eqref{growthG} if %with respect to a stochastic basis 
		\begin{enumerate}
			\item  $\mathscr{S}=(\Omega,\sF,(\sF_t)_{t\geq 0},\bP,W)$ is a stochastic basis, that is, $(\Omega,\sF,(\sF_t)_{t\geq 0},\bP)$ is a filtered probability space satisfying the usual conditions and $W$ is a $U$-valued Wiener process.
			\item $(\bu,{\bfeta})\in \sW(0,T)$. %is an $\tilde\sF_t-$adapted process.
			\item $T^{\bfeta}$ is a $\bP$-a.s. strictly positive, $\sF_t-$stopping time.
			\item  %The process ${\bfeta}$ 	is $(\sF_t)_{t \geq 0}-$progressively measurable and
			$(\bu,\partial_t{\bfeta})$ and ${\bfeta}$ are  $(\sF_t)_{t \geq 0}-$progressively measurable.   
			\item	For every $(\sF_t)_{t \geq 0}-$adapted, essentially bounded process $\bQ:=(\bq,\bfpsi)$ with $C^1$ paths in $\sD$ such that % $\bQ$ is $(\sF_t)_{t \geq 0}$-adapted and 
			$\nabla^{\bfeta} \cdot \bq=0$, the equation 
		\end{enumerate}
		\begin{equation}\label{weaksol}
			\begin{split}
				&{\int_{\sO}J_{\bfeta}(t)\bu(t)\bq(t) +\int_0^L\partial_t{\bfeta}(t)\bfpsi(t)}= \int_{\sO}J_0\bu_0\bq(0)   + \int_{0}^L \bv_0\bfpsi(0)  \\
				&+\int_0^{t }\int_{\sO}J_{\bfeta}\,\bu\cdot \partial_t\bq -{\int_0^{t }b^{{\bfeta}}(\bu,\bw^{\bfeta},\bq)}+\int_0^t\int_\sO J_{\bfeta}(\nabla^{\bfeta}\cdot\bw)\bu\cdot\bq\\
				&- 2\nu\int_0^{t } \int_{\sO} J_{\bfeta}\bD^{\bfeta}(\bu)\cdot \bD^{\bfeta}(\bq)
				+\int_0^{t }\int_0^L\partial_t{\bfeta}\partial_t\bfpsi -\int_0^{t }\int_0^L \langle\sL_e({\bfeta}),\bfpsi \rangle \\
				&+\int_0^{t }\left( P_{{in}}\int_{0}^1q_z\Big|_{z=0}dr-P_{{out}}\int_{0}^1q_z\Big|_{z=1}dr\right) 
				+\int_0^{t }(\bQ,G(\bu,\partial_t\bfeta,{\bfeta})\,dW),
		\end{split}\end{equation}
		holds $\bP$-a.s. for almost every $t \in[0,T^{\bfeta})$.
	\end{defn}
	In the rest of this manuscript we will present a constructive proof of the existence of martingale solutions. The construction is based on the following operator splitting scheme.
	\section{Operator splitting scheme} \label{sec:splitscheme}
	In this section we introduce a Lie operator splitting scheme that defines a sequence of approximate solutions to \eqref{weaksol} by semi-discretizing the problem in time. The final goal is to show that up to a subsequence, approximate solutions converge in a certain sense to a martingale solution of the stochastic FSI problem. 
	
	\subsection{Definition of the splitting scheme} We semidiscretize the problem in time and use operator splitting to divide the coupled stochastic problem into two subproblems, a fluid and a structure subproblem.
	We denote the time step by $\Delta t=\frac{T}{N}$ and use the notation $t^n=n\Delta t$ for $n=0,1,...,N$. 
	
	Let $(\bu^0,{\bfeta}^0,\bv^0)=(\bu_0,{\bfeta}_0,\bv_0)$ be the initial data. At the $i^{th}$ time level, we update the vector $(\bu^{n+\frac{i}{2}},{\bfeta}^{n+\frac{i}{2}},\bv^{n+\frac{i}{2}})$, where $i=1,2$ and $n=0,1,2,...,N-1$, according to the following scheme.
	\subsection*{The structure sub-problem} 
	In this sub-problem we update the structure displacement and the structure velocity while keeping the fluid velocity the same. That is, given $({\bfeta}^n,\bv^n) \in \bH^2_0(0,L)\times \bL^2(0,L)$ we look for a pair $({\bfeta}^{n+\frac12},\bv^{n+\frac12}) \in \bH^2_0(0,L) \times \bH^2_0(0,L)$ that satisfies the following equations pathwise i.e. for each $\omega\in\Omega$:
	\begin{equation}
		\begin{split}\label{first}
			\bu^{n+\frac12}&=\bu^n,\\
			\int_0^L({\bfeta}^{n+\frac12}-{\bfeta}^n) \bfphi dz&= (\Delta t)\int_0^L \bv^{n+\frac12}\bfphi dz,\\
			\int_0^L \left( \bv^{n+\frac12}-\bv^n\right)  \bfpsi dz &+ (\Delta t)\langle \sL_e({\bfeta}^{n+\frac12}),\bfpsi \rangle+\ep(\Delta t)\int_0^L\partial_z\bv^{n+\frac12}\partial_z\bfpsi dz=0,
		\end{split}
	\end{equation}
	for any $\bfphi \in \bL^2(0,L)$ and $\bfpsi \in \bH^2_0(0,L)$.
	Note that the extra regularization term in this subproblem is added to provide the structure velocities $\bv^{n+\frac12}$ with the required regularity in order to establish compactness in Section \ref{sec:tight}. We will later pass $\ep \to 0$ by using the estimates on the fluid dissipation and the interplay between the
	convective term and the boundary deformation.

Before commenting on the existence of the random variables ${\bfeta}^{n+\frac12},\bv^{n+\frac12}$ and their measurability properties, we introduce the associated ALE maps and the second sub-problem.

For each $\omega \in \Omega$ and $n$, we define the (pathwise) ALE map associated with this structure variable as the solution to,
\begin{equation}\label{ale}
	\begin{split}
		\Delta A^\omega_{{\bfeta}^n}&=0, \quad \text{ in } \sO,\\
		A^\omega_{{\bfeta}^n}&=\textbf{id}+{\bfeta}^n(\omega), \quad \text{ on } \Gamma,\\
		A^\omega_{{\bfeta}^n}&=\textbf{id}, \quad \text{ on } \partial\sO\setminus\Gamma.
	\end{split}
\end{equation}

	\subsection*{The fluid sub-problem}   In this step we update the fluid and the structure velocities while keeping the structure displacement unchanged.  However there are two major difficulties associated with this method in this sub-problem. The {\bf first difficulty} is associated with the fact that the fluid domains  can degenerate randomly. To overcome this problem we introduce an "artificial structure displacement" random variable by the means of a cut-off function as follows:
\iffalse
\begin{enumerate}
	\item  To deal with the possibility of fluid domain denegeration, we introduce an "artificial" structure displacement variable ${\bfeta}_*^n$, using a cutoff function, so that the associated ALE map has suitable deterministic bounds. This artificial structure displacement is defined and introduced in the problem in a way that ensures the stability of our time-marching scheme. At every time step we then solve the fluid equations with respect to this artificial domain, determined by ${\bfeta}_*^n$, and then show in Lemma~\ref{StoppingTime}  that there exists a time (stopping time) that is a.s. strictly greater than zero, such that the cutoff in the definition of the artificial displacement is, in fact,  never used until possibly that stopping time, thereby providing a solution to the original problem without the cutoff until the stopping time.
	
	\item 
\end{enumerate} 
\fi
For $\delta=(\delta_1,\delta_2)$, let $\Theta_\delta$ be the
step function such that
$\Theta_{\delta}(x,y)=1$ if $\delta_1 < x, \text{} y<  \frac1{\delta_2}$, 
and $\Theta_{\delta}(x,y)=0$ otherwise. %if $0<x<\frac{\delta}2$ and $ \frac2{\delta} <y$. 
For brevity we define, for a fixed $s \in (\frac{3}2,2)$, a real-valued function $\theta_\delta({\bfeta}^n)$ which tracks all the structure displacements until the time step $n$, and is equal to 1 until the step for which the structure quantities leave the desired bounds given in terms of $\delta$:
\begin{align}\label{theta}
	\theta_\delta({\bfeta}^n):=\min_{k\leq n}\,
	\Theta_{\delta}\left( \inf_{\sO}J^k, \|{\bfeta}^k\|_{\bH^s(\Gamma)}\right),\end{align}
where $J^k(\omega)=\text{det}\nabla A^\omega_{{\bfeta}^k}$ is the Jacobian of the map defined in \eqref{ale}.

Now we define the artificial structure displacement random variable as follows,
\begin{align}\label{eta*}
	{\bfeta}^{n}_*(z,\omega)={\bfeta}^{\max_{0\leq k\leq n}\theta_\delta({\bfeta}^k)k}(z,\omega)\quad \text{for every } \omega \in \Omega,\end{align}
where the superscript in the definition above indicates the time step and not the power of structure displacement. This artificial structure variable is defined with care in a way that ensures the stability of our scheme.

Observe that, for any $p>2$ such that $s\geq\frac{5}2-\frac2p$, we have the following regularity result for the harmonic extension of the boundary data associated with $\bfeta^n_*$ on a square (see Section 5 in \cite{G11}):
\begin{align}\label{boundsA1}
	\|A^\omega_{{\bfeta}_*^n}-{\bf id}\|_{\bW^{2,p}(\sO)} \leq C\|{\bfeta}_*^n\|_{\bW^{2-\frac1{p},p}(\Gamma)} \leq C\|{\bfeta}_*^n\|_{\bH^s(\Gamma)} .
\end{align}
Hence, by Morrey's inequality, for some $C_*>0$ we obtain (see Theorem 7.26 in \cite{GT83}) for some $p<4$,
\begin{align}\label{boundJ}
	\|\nabla (A^\omega_{{\bfeta}_*^n}-{\bf id})\|_{\bC^{0,\frac1p}( \bar\sO)} \leq C\|{\bfeta}_*^n\|_{\bH^s(\Gamma)} \leq \frac{C_*}{\delta_2}, \quad \frac{5}2-\frac2p\leq s<2.
\end{align}
Now thanks to Theorem 5.5-1 (B) of \cite{C88}, if $\delta_2$ satisfies 
\begin{align}\label{delta}
	C_* < \delta_2,
\end{align}
then the map $A^{\omega}_{{\bfeta}^n_*}\in \bC^{1,\frac1p}(\bar\sO)$ is injective for any $n$. 
\iffalse
Observe also that for any $\delta_2>0$, $s>\frac32$ and $\bfeta=(\eta_z,\eta_r) \in \bH_0^2(\Gamma)$, Sobolev embedding theorem gives us the existence of a deterministic constant $\hat C>0$ depending only on $\Gamma$ and $s$ such that, $$\|\partial_z\bfeta^n_*\|_{\bL^\infty(\Gamma)}\leq C\|\bfeta^n_*\|_{\bH^s(\Gamma)}\leq\frac{ \hat C}{\delta_2}.$$ Hence for any $z\in(0,L)$ we have $1- \frac{\hat C}{\delta_2}\leq (1+\partial_z(\eta^n_*)_z(z))$. Then choosing  $\delta_2>0$ large enough to ensure
\begin{align}\label{delta2}
	0<1-\frac{\hat C}{\delta_2}<1+\partial_z(\eta^n_*)_z(z),\quad\forall z\in[0,L]
\end{align}
we obtain that the map $z+\eta_z(z)$ is injective. Moreover since  $H^1(\Gamma) \hookrightarrow C^{0,\frac12}(\bar\Gamma)$, the intermediate value theorem implies that $id+(\eta^n_*)_z:[0,L] \to [0,L]$ is bijective and thus the corresponding domain $\sO_{\bfeta^n_*}$ for the fluid is a sub-graph of a function Lipschitz function $\bfeta^n_*$.
\fi

Hence the domain configurations corresponding the the artificial variables $\bfeta^n_*$ are non-degenerate and their Jacobians have a deterministic lower bound of $\delta_1$. We will use these artificial domain configurations to define the fluid sub-problem.

 We will define the fluid-subproblem on these artificial domain configurations.

The {\bf second difficulty} in defining the fluid sub-problem is associated with the fact that, through the transformed divergence-free condition, the test functions depend on the structure displacement found in the previous sub-problem.  However it is not clear how to deal with such test functions, as in the following section we construct and work on a new probability space.
Hence, in this sub-problem we supplement the weak formulation by a penalty term of the form $\frac1{\ep}\int \text{div}^{\bfeta} \bu \text{div}^{\bfeta}\bq$  {\s to enforce the incompressibility condition only in the limit as $\ep \rightarrow 0$}. 
	
	\noindent{\bf A penalized system:} We introduce a  penalty term and define the fluid subproblem as follows. Let $\Delta_n W:=W(t^{n+1})-W(t^n)$.\\
	Then for any $\ep>0$ and given $\bU^n=(\bu^n,\bv^n) \in \sU$ and $\delta_2$ that satisfies \eqref{delta}, we look for $ (\bu^{n+1},\bv^{n+1})\in \sU$ that solves
	\begin{equation}
		\begin{split}\label{second}
			&\qquad\qquad{\bfeta}^{n+1}:={\bfeta}^{n+\frac12}, \\
			&\int_{\sO}J^n_*\left( \bu^{n+1}-\bu^{n{}}\right) \bq d\bx  +\frac{1}2\int_\sO 
			\left( J_*^{n+1}-J^n_*\right)
	\bu^{n+1}\cdot\bq 
			d\bx +\int_0^L(\bv^{n+1}-\bv^{n+\frac12} )\bfpsi dz  \\
			&+\frac12(\Delta t)\int_{\sO}J_*^n((\bu^{n+1}-
			\bw^{{n}}_*)\cdot\nabla^{{\bfeta}_*^n}\bu^{n+1}\cdot\bq - (\bu^{n+1}-
			\bw^{{n}}_*)\cdot\nabla^{{\bfeta}_*^n}\bq\cdot\bu^{n+1})d\bx \\
			&+2\nu(\Delta t)\int_{\sO}J^n_* \bD^{{\bfeta}_*^{n}}(\bu^{n+1})\cdot \bD^{{\bfeta}_*^{n}}(\bq) d\bx 
			+ \frac{(\Delta t)}{\ep}\int_\sO 
			\text{div}^{{\bfeta}^n_*}\bu^{n+1}\text{div}^{{\bfeta}^n_*}\bq d\bx \\
	&=(\Delta t)\left( P^n_{{in}}\int_{0}^1q_z\Big|_{z=0}dr-P^n_{{out}}\int_{0}^1q_z\Big|_{z=1}dr\right) +  		(G(\bU^{n},{\bfeta}_*^n)\Delta_n W, \bQ),
		\end{split}
	\end{equation}
	for any $\bQ=(\bq,\bfpsi) \in \sU$.
	Here,
		$$P^n_{in/out}:=\frac1{\Delta t}\int_{t^n}^{t^{n+1}}P_{in/out}\,dt,\quad \text{div}^{{\bfeta}}\bu=tr(\nabla^{\bfeta}\bu),	$$ and
	$$\bw^{n}_*=\frac{1}{\Delta t}(A^\omega_{{\bfeta}^{n+1}_*}-A^\omega_{{\bfeta}^n_*}),\quad J^n_*=\text{det}\nabla A^\omega_{{\bfeta}^n_*}.$$
	\begin{remark}	
	Observe that using the data from the second subproblem, we update $\bfeta^n$ in the first subproblem and not $\bfeta_*^n$. This is required to obtain a stable scheme. However this also means that after a certain time (which will be random in nature), we will produce solutions that are meaningless.	However the discrepensies caused by the artificial structure variable will be taken care of by introducing a stopping time until which the limiting solutions, corresponding to the approximations constructed in Section \ref{subsec:approxsol} using ${\bfeta}^n$'s and ${\bfeta}^n_*$'s, are equal. 
\end{remark}
	We are now ready to prove the existence of solutions to the two sub-problems. For this purpose we introduce the following discrete energy and dissipation for $i=0,1$:
	\begin{equation}\label{EnDn}
		\begin{split}
			E^{n+\frac{i}2}&:=\frac12\Big(\int_{\sO}J_*^n|\bu^{n+\frac{i}2}|^2 d\bx
			+\|\bv^{n+\frac{i}2}\|^2_{\bL^2(0,L)}+\|{\bfeta}^{n+\frac{i}2}\|^2_{\bH_0^2(0,L)}\Big),\\
			D_1^{n}&:=\Delta t\int_0^L\ep|\partial_z\bv^{n+\frac12}|^2 dz,\\
			D^n_2&:=\Delta t \int_{\sO}\left( 2\nu J_*^n |\bD^{{\bfeta}_*^{n}}(\bu^{n+1})|^2+\frac1{\ep}|\text{div}^{{\bfeta}^n_*}\bu^{n+1}|^2 \right) d\bx .
	\end{split}\end{equation}
	\begin{lem}({\bf{Existence for the structure sub-problem.}})
		Assume that ${\bfeta}^n$ and $\bv^{n}$ are $\bH^2_0(0,L)$ and $\bL^2(0,L)$ valued $\sF_{t^n}$-measurable random variables, respectively. Then there exist $\bH^2_0(0,L)$- valued $\sF_{t^n}$-measurable random variables ${\bfeta}^{n+\frac12},\bv^{n+\frac12}$ that solve \eqref{first}, and the following semidiscrete energy inequality holds:
		\begin{equation}\label{energy1}
			\begin{split}
				E^{n+\frac12} +D^n_1+C_1^n \leq E^{n},
			\end{split}
		\end{equation}
		where
		$$C^{n}_1:= \frac12\|\bv^{n+\frac12}-\bv^n\|_{\bL^2(0,L)}^2   +\frac12 \|{\bfeta}^{n+\frac12}-{\bfeta}^{n}\|_{\bH_0^2(0,L)}^2,$$ corresponds to numerical dissipation.
	\end{lem}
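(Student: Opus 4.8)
The plan is to solve \eqref{first} pathwise by a Lax--Milgram argument, then upgrade to measurability, and finally read off the energy balance by a polarization identity. The first equation of \eqref{first} prescribes $\bu^{n+\frac12}=\bu^n$, so there is nothing to do for the fluid velocity. For the structure, I would first eliminate $\bv^{n+\frac12}$: testing the second equation of \eqref{first} against arbitrary $\bfphi\in\bL^2(0,L)$, and using that both unknowns are sought in $\bH^2_0(0,L)$, forces the kinematic relation $\bv^{n+\frac12}=\frac1{\Delta t}({\bfeta}^{n+\frac12}-{\bfeta}^n)$ as an identity in $\bH^2_0(0,L)$. Substituting this into the third equation reduces \eqref{first} to a single variational problem: find ${\bfeta}^{n+\frac12}\in\bH^2_0(0,L)$ with $a({\bfeta}^{n+\frac12},\bfpsi)=\ell(\bfpsi)$ for all $\bfpsi\in\bH^2_0(0,L)$, where $a({\bfeta},\bfpsi)=\frac1{\Delta t}\int_0^L{\bfeta}\bfpsi\,dz+(\Delta t)\langle\sL_e({\bfeta}),\bfpsi\rangle$ and $\ell(\bfpsi)=\frac1{\Delta t}\int_0^L{\bfeta}^n\bfpsi\,dz+\int_0^L\bv^n\bfpsi\,dz$. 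Boundedness and coercivity of $a$ on $\bH^2_0(0,L)$ follow from the assumed continuity and coercivity of $\sL_e$ (the added $\bL^2$-term being nonnegative and bounded via $\bH^2_0(0,L)\hookrightarrow\bL^2(0,L)$), and $\ell$ is a bounded functional since ${\bfeta}^n,\bv^n\in\bL^2(0,L)$; so Lax--Milgram gives a unique ${\bfeta}^{n+\frac12}$, and then $\bv^{n+\frac12}:=\frac1{\Delta t}({\bfeta}^{n+\frac12}-{\bfeta}^n)\in\bH^2_0(0,L)$ completes the solution of \eqref{first}.

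For the measurability claim, I would note that the solution operator $S:\bL^2(0,L)\times\bL^2(0,L)\to\bH^2_0(0,L)$, $({\bfeta}^n,\bv^n)\mapsto{\bfeta}^{n+\frac12}$, is linear and, by the Lax--Milgram estimate, bounded, hence continuous and therefore Borel measurable. Composing $S$ with the $\sF_{t^n}$-measurable $\bL^2(0,L)\times\bL^2(0,L)$-valued random variable $({\bfeta}^n,\bv^n)$ shows that ${\bfeta}^{n+\frac12}$, and then $\bv^{n+\frac12}=\frac1{\Delta t}({\bfeta}^{n+\frac12}-{\bfeta}^n)$, are $\bH^2_0(0,L)$-valued $\sF_{t^n}$-measurable random variables.

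For the energy inequality, since $\bv^{n+\frac12}\in\bH^2_0(0,L)$ is an admissible test function, I would test the third equation of \eqref{first} with $\bfpsi=\bv^{n+\frac12}$ and use the kinematic relation to replace $(\Delta t)\langle\sL_e({\bfeta}^{n+\frac12}),\bv^{n+\frac12}\rangle$ by $\langle\sL_e({\bfeta}^{n+\frac12}),{\bfeta}^{n+\frac12}-{\bfeta}^n\rangle$. Then I would apply the elementary identity $\langle a,a-b\rangle=\frac12(\|a\|^2-\|b\|^2+\|a-b\|^2)$ to the $\bL^2(0,L)$ inner product in the first term and to the inner product $\langle\sL_e(\cdot),\cdot\rangle$ in the second, using the convention $\|{\bfeta}\|_{\bH^2_0(0,L)}^2:=\langle\sL_e({\bfeta}),{\bfeta}\rangle$ (equivalent to the standard norm by the boundedness and coercivity of $\sL_e$) in \eqref{EnDn}. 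Adding $\frac12\int_{\sO}J_*^n|\bu^n|^2\,d\bx$ to both sides and recalling $\bu^{n+\frac12}=\bu^n$, so that the fluid contribution to the energy is unchanged, yields the balance $E^{n+\frac12}+C_1^n=E^n$, which in particular gives \eqref{energy1}.

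I do not anticipate a serious obstacle; the argument is a textbook Lax--Milgram problem followed by an energy identity. The two points that need a little care are (i) the measurability step, which relies on recognizing the solution map as a bounded linear operator, and (ii) verifying that $\bv^{n+\frac12}$ genuinely lies in $\bH^2_0(0,L)$ so that it is an admissible test function in the energy estimate --- both handled by the initial elimination of $\bv^{n+\frac12}$ via the kinematic relation. It is also worth stating explicitly the convention that the $\bH^2_0(0,L)$-norm in \eqref{EnDn} is the $\sL_e$-induced one, so that the energy balance closes exactly rather than merely up to equivalence constants.
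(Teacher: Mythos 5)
Your proposal is correct and follows essentially the same route as the paper: the energy inequality is obtained exactly as in the paper's proof, by using the pathwise relation $\bv^{n+\frac12}=\frac{1}{\Delta t}({\bfeta}^{n+\frac12}-{\bfeta}^n)$, testing \eqref{first}$_3$ with $\bfpsi=\bv^{n+\frac12}$, applying the identity $a(a-b)=\frac12(|a|^2-|b|^2+|a-b|^2)$ (in $\bL^2$ and in the $\sL_e$-induced inner product), and adding the unchanged fluid term since $\bu^{n+\frac12}=\bu^n$. The existence, uniqueness and $\sF_{t^n}$-measurability part, which the paper simply declares straightforward and delegates to a citation, is filled in by you with the standard Lax--Milgram reduction and the continuity of the linear solution map, and your explicit remark that the $\bH^2_0$-norm in \eqref{EnDn} should be understood as the $\sL_e$-energy norm is a legitimate clarification of a convention the paper uses implicitly.
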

	\begin{proof}
		The proof of existence and uniqueness of measurable solutions is straightforward (see \cite{KC23}). %and assumptions on $K$.
		Furthermore we can write
		$$\bv^{n+\frac12}=\frac{{\bfeta}^{n+\frac12}-{\bfeta}^n}{\Delta t}.$$
		Using this pathwise equality while taking $\bfpsi=\bv^{n+\frac12}$ in \eqref{first}$_3$ and using $a(a-b)=\frac12(|a|^2-|b|^2+|a-b|^2)$,  we obtain,
		\begin{align}
			\notag	\|\bv^{n+\frac12}\|_{\bL^2(0,L)}^2 &+\|\bv^{n+\frac12}-\bv^n\|_{\bL^2(0,L)}^2  +\|{\bfeta}^{n+\frac12}\|_{\bH_0^2(0,L)}^2+\|{\bfeta}^{n+\frac12}-{\bfeta}^n\|_{\bH_0^2(0,L)}^2 +\ep\|\partial_z\bv^{n+\frac12}\|_{\bL^2(0,L)}\\
				&\leq\|\bv^n\|_{\bL^2(0,L)}^2+ \|{\bfeta}^{n}\|_{\bH_0^2(0,L)}^2.\label{energy1_1}
	\end{align}
		Hence using the fact that $\bu^n=\bu^{n+\frac12}$ and adding the relevant terms on both sides of \eqref{energy1_1} we obtain \eqref{energy1}.
	\end{proof}
	\begin{lem}({\bf{Existence for the fluid sub-problem.}})\label{existu}
		For given $\delta>0$, and given $\sF_{t^n}$-measurable random variables $(\bu^{n+\frac12},\bv^n)$ taking values in $\sU$ and $\bv^{n+\frac12}$ taking values in $\bH^2_0(0,L)$, there exists an $\sF_{t^{n+1}}$-measurable random variable $(\bu^{n+1},\bv^{n+1})$ taking values in $\sU$ that solves \eqref{second}, and the solution satisfies the following energy estimate
		\begin{equation}\label{energy2}
			\begin{split}
				E^{n+1}+D_2^{n}+C_2^{n}&\leq E^{n+\frac12} +C\Delta t((P^n_{in})^2+(P^n_{out})^2) 
				+ {C}\|\Delta_nW\|_{U_0}^2
				\|G(\bU^{n},{\bfeta}_*^n)\|_{L_2(U_0;\bL^2)}^2\\
				&+\|
				(G( \bU^{n},{\bfeta}_*^n)\Delta_n W, \bU^{n}) \|+\frac14\int_0^L|\bv^{n+\frac12}-\bv^n|^2dz
		\end{split}\end{equation}
		where
		$$C_2^{n}:=\frac14\int_\sO J_*^n\left( |\bu^{n+1}-\bu^{n}|^2 \right) d\bx +\frac14\int_0^L|\bv^{n+1}-\bv^{n+\frac12}|^2 dz$$
		is numerical dissipation, and ${\bfeta}^n_*$ is as defined in \eqref{eta*}.
	\end{lem}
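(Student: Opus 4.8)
The plan is to read \eqref{second} as a single linear variational identity for the pair $\bU^{n+1}=(\bu^{n+1},\bv^{n+1})$ on the Hilbert space $\sU=V\times\bL^2(0,L)$, with all the data --- ${\bfeta}^n_*$ (hence $J^n_*$, $J^{n+1}_*$, $\nabla A^\omega_{{\bfeta}^n_*}$, $\bw^{n+1}_*$), $\bu^{n+\frac12}=\bu^n$, $\bv^{n+\frac12}$, $P^n_{in/out}$ and $G(\bU^n,{\bfeta}^n_*)\Delta_nW$ --- frozen pathwise. Writing it as $a_\omega(\bU^{n+1},\bQ)=\ell_\omega(\bQ)$ for all $\bQ\in\sU$, I would first check the hypotheses of the Lax--Milgram lemma. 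Boundedness of $a_\omega$ on $\sU\times\sU$ uses the deterministic bounds \eqref{AS}, \eqref{boundvarphi} and \eqref{boundJ}: they give $\delta_1\le J^n_*,J^{n+1}_*$, $\|\nabla A^\omega_{{\bfeta}^n_*}\|_{\bL^\infty}\le C(\delta_2)$ and hence $\|(\nabla A^\omega_{{\bfeta}^n_*})^{-1}\|_{\bL^\infty}\le C(\delta)$, which combined with the two--dimensional embedding $V\hookrightarrow\bL^4(\sO)$ (for the skew--symmetric convection term, recalling $\bu^n\in V$ and that $\Delta t\,\bw^{n+1}_*$ is $\bL^\infty$-bounded by \eqref{boundJ}) and the trace theorem (for the inlet/outlet pressure terms) makes every form in \eqref{second} continuous on $\sU\times\sU$; likewise $\ell_\omega$ is a bounded linear functional on $\sU$, its inlet/outlet part controlled by the trace theorem and its noise part by \eqref{growthG}.

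The crucial point is coercivity. Testing with $\bQ=\bU^{n+1}$: the two mass terms combine via $2a(a-b)=|a|^2-|b|^2+|a-b|^2$ into $\tfrac12\int_\sO J^{n+1}_*|\bu^{n+1}|^2\ge\tfrac{\delta_1}{2}\|\bu^{n+1}\|^2_{\bL^2(\sO)}$, up to the nonnegative numerical term $\tfrac12\int_\sO J^n_*|\bu^{n+1}-\bu^n|^2$; the convection term vanishes by antisymmetry; the penalty term is $\ge0$; the viscous term equals $2\nu\Delta t\int_\sO J^n_*|\bD^{{\bfeta}^n_*}(\bu^{n+1})|^2\ge2\nu\Delta t\,\delta_1\|\bD^{{\bfeta}^n_*}(\bu^{n+1})\|^2_{\bL^2(\sO)}$, which controls $\|\bu^{n+1}\|^2_{\bH^1(\sO)}$ by a \emph{transformed Korn inequality} on $V$ whose constant depends only on $\delta$ and $\sO$ precisely because of \eqref{AS}; and the structure--velocity term yields $\|\bv^{n+1}\|^2_{\bL^2(0,L)}$. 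Thus $a_\omega$ is $\sU$-coercive with a constant depending on $(\delta,\nu,\Delta t,\sO)$ but \emph{not} on $\omega$, and Lax--Milgram produces a unique $\bU^{n+1}(\omega)\in\sU$ for every $\omega$. For $\sF_{t^{n+1}}$-measurability I would argue as for the structure subproblem (cf.\ \cite{KC23}): solve the finite--dimensional Galerkin systems in a fixed basis of $\sU$, whose matrices and load vectors depend $\sF_{t^{n+1}}$-measurably on $\omega$ (they involve only the $\sF_{t^n}$-measurable quantities ${\bfeta}^n_*,\bu^n,\bv^n,\bv^{n+\frac12}$ and the increment $\Delta_nW$), so the Galerkin solutions are $\sF_{t^{n+1}}$-measurable by measurability of finite matrix inversion; the coercivity estimates are uniform in the Galerkin index, yielding strong convergence in $\sU$, and measurability passes to the pointwise limit.

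For \eqref{energy2} I would take $\bQ=\bU^{n+1}$ in \eqref{second}. The mass terms give, via $2a(a-b)=|a|^2-|b|^2+|a-b|^2$ and $\bu^{n+\frac12}=\bu^n$, the difference $\tfrac12\int_\sO J^{n+1}_*|\bu^{n+1}|^2-\tfrac12\int_\sO J^n_*|\bu^n|^2$ plus $\tfrac12\int_\sO J^n_*|\bu^{n+1}-\bu^n|^2$, and the structure--velocity term gives $\tfrac12\|\bv^{n+1}\|^2_{\bL^2(0,L)}-\tfrac12\|\bv^{n+\frac12}\|^2_{\bL^2(0,L)}+\tfrac12\|\bv^{n+1}-\bv^{n+\frac12}\|^2_{\bL^2(0,L)}$; since ${\bfeta}^{n+1}={\bfeta}^{n+\frac12}$ the $\bH^2_0$-parts of $E^{n+1}$ and $E^{n+\frac12}$ agree, so the left side reconstructs $E^{n+1}$ and $E^{n+\frac12}$ appears on the right. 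The convection term vanishes, the viscous and penalty terms form $D^n$, half of each numerical term is retained as $C^n_2$, and the inlet/outlet pressure terms are bounded by the trace (interpolation) inequality and Young's inequality --- the $\bu^{n+1}$ contributions being absorbed into the viscous and mass terms --- at the cost of $C\Delta t((P^n_{in})^2+(P^n_{out})^2)$. For the noise term I split $(G(\bU^n,{\bfeta}^n_*)\Delta_nW,\bU^{n+1})=(G(\bU^n,{\bfeta}^n_*)\Delta_nW,\bU^n)+(G(\bU^n,{\bfeta}^n_*)\Delta_nW,\bU^{n+1}-\bU^n)$; the first summand is kept verbatim as the term $\|(G(\bU^n,{\bfeta}^n_*)\Delta_nW,\bU^n)\|$ on the right (to be handled later by martingale/BDG arguments after summation in $n$), while the second is estimated by Cauchy--Schwarz, the bound $\|G(\bU^n,{\bfeta}^n_*)\Delta_nW\|_{\bL^2}\le\|\Delta_nW\|_{U_0}\|G(\bU^n,{\bfeta}^n_*)\|_{L_2(U_0;\bL^2)}$ from \eqref{growthG}, the triangle inequality $\|\bv^{n+1}-\bv^n\|\le\|\bv^{n+1}-\bv^{n+\frac12}\|+\|\bv^{n+\frac12}-\bv^n\|$ (in $\bL^2(0,L)$), and Young's inequality: the $\|\bu^{n+1}-\bu^n\|^2_{\bL^2(\sO)}$ and $\|\bv^{n+1}-\bv^{n+\frac12}\|^2_{\bL^2(0,L)}$ pieces are absorbed into the remaining halves of the numerical dissipation (there is slack, $\tfrac12$ versus the $\tfrac14$ in $C^n_2$), the $\|G(\bU^n,{\bfeta}^n_*)\Delta_nW\|^2_{\bL^2}$ pieces assemble into $C\|\Delta_nW\|^2_{U_0}\|G(\bU^n,{\bfeta}^n_*)\|^2_{L_2(U_0;\bL^2)}$, and the leftover $\tfrac14\int_0^L|\bv^{n+\frac12}-\bv^n|^2$ is exactly the last term on the right of \eqref{energy2}. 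Collecting everything yields \eqref{energy2}.

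The main obstacle is the coercivity step --- more precisely, having a transformed Korn inequality on $V$ with a \emph{deterministic} constant; this is exactly what the cut--off construction \eqref{theta}--\eqref{boundJ} is designed to provide, and without it neither Lax--Milgram here nor the later passages to the limit (in $N$, in $\Delta t$, and on the Skorokhod space) would deliver $\omega$-independent bounds. A secondary, more routine difficulty is the measurable--selection argument that upgrades pathwise solvability to $\sF_{t^{n+1}}$-measurability of $\bU^{n+1}$.
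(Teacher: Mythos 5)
Your proposal is correct and follows essentially the same route as the paper: pathwise Lax--Milgram for the linear variational problem (with coercivity supplied by the deterministic cut-off bounds \eqref{AS}--\eqref{boundJ} and a transformed Korn inequality), followed by the energy estimate obtained by testing \eqref{second} with $(\bu^{n+1},\bv^{n+1})$, using $a(a-b)=\tfrac12(|a|^2-|b|^2+|a-b|^2)$, splitting the stochastic term into $(G\Delta_nW,\bU^{n})$ and $(G\Delta_nW,\bU^{n+1}-\bU^{n})$, and absorbing the latter via Cauchy--Schwarz, the triangle inequality on $\bv^{n+1}-\bv^n$, and Young's inequality, exactly as in the paper. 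The only minor difference is your Galerkin/matrix-inversion argument for $\sF_{t^{n+1}}$-measurability in place of the measurable-selection argument the authors use elsewhere; since the problem is linear with a unique pathwise solution, your route is equally valid.
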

	\begin{proof}
		The proof of existence and measurability of the solutions is given using Brouwer's fixed point theorem and the Kuratowski and Ryll-Nardzewski selection theorem in \cite{TC23}.

		Next we will show that the solution satisfies energy estimate \eqref{energy1_1}. For this purpose we will derive a pathwise inequality involving the discrete energies. We take $(\bq,\bfpsi)=(\bu^{n+1},\bv^{n+1})$ in \eqref{second} and using the identity $a(a-b)=\frac12(|a|^2-|b|^2+|a-b|^2)$, we obtain
		\begin{align*}
			&\frac12\int_\sO   J_*^n \left(|\bu^{n+1}|^2-|\bu^{n+\frac12}|^2 + |\bu^{n+1}-\bu^{n+\frac12}|^2 \right) 
			+\frac{1}2\int_\sO \left( J_*^{n+1}- J_*^n\right)  |\bu^{n+1}|^2 d\bx\\
			&+2\nu(\Delta t)\int_{\sO_{}} J_*^{n}|\bD^{{\bfeta}_*^{n}}(\bu^{n+1})|^2 d\bx + \frac1{\ep}{(\Delta t)}\int_{\sO_{}} |\text{div}^{{\bfeta}^n_*}\bu^{n+1}|^2 d\bx\\
				&+\frac12\int_0^L|\bv^{n+1}|^2-|\bv^{n+\frac12}|^2+|\bv^{n+1}-\bv^{n+\frac12}|^2 dz \\
			&= (\Delta t)\left( P^n_{{in}}\int_{0}^1u^{n+1}_z\Big|_{z=0}dr-P^n_{{out}}\int_{0}^1u^{n+1}_z\Big|_{z=1}dr\right) \\
			&+( G( \bU^{n},{\bfeta}_*^n)\Delta_n W, (\bU^{n+1}-\bU^{n} ))+
			(G(\bU^{n},{\bfeta}_*^n)\Delta_n W, \bU^{n}).
		\end{align*}
		Also, observe that the discrete stochastic integral is divided into two terms. {\s We estimate the first term by using the Cauchy-Schwarz inequality to obtain that for some} $C(\delta)>0$ independent of $n$ {\s the following holds:} 
		\begin{align*}&| %(R+{\bfeta}_*^n) 
			(G(\bU^{n},{\bfeta}_*^n)\Delta_n W, (\bU^{n+1}-\bU^{n} ))| \\
			&\leq {C}\|\Delta_nW\|_{U_0}^2 \|G(\bU^{n},{\bfeta}_*^n)\|_{L_2(U_0,\bL^2)}^2  +\frac14\int_{\sO} J_*^n
		| \bu^{n+1}-\bu^{n}|^2d\bx+\frac18\int_0^L|\bv^{n+1}-\bv^{n}|^2dz\\
			&\leq {C}\|\Delta_nW\|_{U_0}^2 \|G(\bU^{n},{\bfeta}_*^n)\|_{L_2(U_0;\bL^2)}^2 +\frac14\int_{\sO} J_*^n
			| \bu^{n+1}-\bu^{n}|^2d\bx\\
			&+ \frac14\int_0^L|\bv^{n+1}-\bv^{n+\frac12}|^2dz+\frac14\int_0^L|\bv^{n+\frac12}-\bv^{n}|^2dz.
		\end{align*}
 {\s This completes the proof of Lemma \ref{existu}.}
	\end{proof}
	Next, we will obtain uniform estimates on the expectation of the kinetic and elastic energy and dissipation of the coupled problem.
	\begin{thm}{\bf (Uniform Estimates.)}\label{energythm}
		For any $N>0$, $\Delta t=\frac{T}{N}$, and $\delta=(\delta_1,\delta_2)$ satisfying \eqref{delta}, there exists a constant $C>0$ that depends on the initial data, $\delta$, $T$, and is independent of $N$ and $\ep$ such that
		\begin{enumerate}
			\item $\bE\left( \max_{1\leq n \leq N}E^{n}\right)  <C$, $ \bE\left( \max_{0\leq n\leq N-1}E^{n+\frac12}\right) <C.$
			\item $\bE\sum_{n=0}^{N-1} D^n <C$.
			\item $\bE\sum_{n=0}^{N-1}\int_\sO\left(({J}_*^n) |\bu^{n+1}-\bu^{n}|^2 \right) d\bx+\|\bv^{n+1}-\bv^{n+\frac12}\|_{\bL^2(0,L)}^2 <C.$
			\item $\bE \sum_{n=0}^{N-1} \|\bv^{n+\frac12}-\bv^n\|_{\bL^2(0,L)}^2   + \|{\bfeta}^{n+1}-{\bfeta}^{n}\|_{\bH^2(0,L)}^2  <C,$
		\end{enumerate}
		where $E^n$ and $D^n:=D_1^n+D^n_2$ are defined in \eqref{EnDn}.
	\end{thm}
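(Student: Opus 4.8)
The strategy is to sum the two semidiscrete energy inequalities \eqref{energy1} and \eqref{energy2} over the time steps, take expectations, and use the discrete Gronwall inequality together with the Burkholder--Davis--Gundy inequality to control the stochastic terms. First I would add \eqref{energy1} (with $n$ replaced by $n$) and \eqref{energy2} and observe that the numerical-dissipation term $\tfrac14\int_0^L|\bv^{n+\frac12}-\bv^n|^2\,dz$ on the right-hand side of \eqref{energy2} is absorbed by the $C_1^n$ term on the left coming from \eqref{energy1} (which contributes $\tfrac12\|\bv^{n+\frac12}-\bv^n\|^2_{\bL^2}$), leaving a genuine dissipation bound. This yields a pathwise telescoping inequality of the form
\begin{equation*}
	E^{m}+\sum_{n=0}^{m-1}\bigl(D^n+C_1^n+C_2^n\bigr)\leq E^0 + C\Delta t\sum_{n=0}^{m-1}\bigl((P^n_{in})^2+(P^n_{out})^2\bigr)+\sum_{n=0}^{m-1}R^n,
\end{equation*}
where $R^n$ collects the two stochastic contributions: the "square" term $C\|\Delta_nW\|_{U_0}^2\|G(\bU^n,{\bfeta}_*^n)\|^2_{L_2(U_0;\bL^2)}$ and the "martingale" term $(G(\bU^n,{\bfeta}_*^n)\Delta_nW,\bU^n)$.

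Next I would handle the two stochastic terms separately. For the square term, I would take expectation and use the independence of $\Delta_nW$ from $\sF_{t^n}$ to get $\bE\|\Delta_nW\|^2_{U_0}=\Delta t\,\Tr(Q|_{U_0})$, so that $\bE\sum_n C\|\Delta_nW\|^2_{U_0}\|G(\bU^n,{\bfeta}_*^n)\|^2_{L_2} \le C\Delta t\sum_n \bE\|G(\bU^n,{\bfeta}_*^n)\|^2_{L_2}$; then invoke the growth bound \eqref{growthG}, which after using \eqref{AS} (the deterministic $\bH^s$-bound on ${\bfeta}_*^n$, hence a uniform $\bL^\infty$-bound via Sobolev embedding) gives $\|G(\bU^n,{\bfeta}_*^n)\|^2_{L_2}\le C(\delta)(\|\bu^n\|^2_{\bL^2(\sO)}+\|\bv^n\|^2_{\bL^2(0,L)})\le C(\delta)E^n$ (using $J_*^n\ge\delta_1$ to compare $\|\bu^n\|^2_{\bL^2}$ with $\int_\sO J_*^n|\bu^n|^2$). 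For the martingale term, because $\bU^n$ and $G(\bU^n,{\bfeta}_*^n)$ are $\sF_{t^n}$-measurable, the partial sums $\sum_{n=0}^{m-1}(G(\bU^n,{\bfeta}_*^n)\Delta_nW,\bU^n)$ form a discrete martingale; I would apply the discrete BDG inequality to bound $\bE\max_{m}\bigl|\sum_{n=0}^{m-1}(\cdots)\bigr|$ by $C\bE\bigl(\sum_n \Delta t\,\|G(\bU^n,{\bfeta}_*^n)\|^2_{L_2}\|\bU^n\|^2_{\bL^2}\bigr)^{1/2}$, then use $\|\bU^n\|^2_{\bL^2}\le CE^n\le C\max_{k\le m}E^k$ pulled out of the sum and Young's inequality $ab\le\varepsilon a^2+\tfrac{1}{4\varepsilon}b^2$ to absorb $\tfrac14\bE\max_k E^k$ into the left-hand side, at the cost of a term $C\Delta t\sum_n\bE E^n$.

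After these estimates, taking $\max$ over $m\le M\le N$ before expectation and collecting everything gives
\begin{equation*}
	\bE\max_{1\le m\le M}E^m \le C\bigl(E^0,\delta,T,P_{in/out}\bigr) + C(\delta)\Delta t\sum_{n=0}^{M-1}\bE\max_{k\le n}E^k,
\end{equation*}
and since $\Delta t\cdot N=T$, the discrete Gronwall lemma yields the uniform bound $\bE\max_{1\le n\le N}E^n\le C$ independent of $N$, proving (1) for $E^n$ (and the same argument bounds $\bE\max E^{n+\frac12}$, using \eqref{energy1} which gives $E^{n+\frac12}\le E^n$). Then (2), (3), (4) follow immediately by returning to the telescoped inequality: since all of $D^n$, $C_1^n$, $C_2^n$ are nonnegative and their sum is bounded by $E^0$ plus the (now controlled) right-hand side whose expectation is finite, we get $\bE\sum_n D^n<C$, $\bE\sum_n C_2^n<C$, and $\bE\sum_n C_1^n<C$; reading off the definitions of $C_1^n$ and $C_2^n$ and the fact that $\partial_t{\bfeta}$-type differences $\|{\bfeta}^{n+1}-{\bfeta}^n\|^2_{\bH^2_0}$ appear in $C_1^n$ gives exactly (3) and (4).

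\textbf{Main obstacle.} The delicate point is the treatment of the stochastic terms: one must be careful that the growth bound \eqref{growthG} involves $\|{\bfeta}\|_{\bL^\infty(0,L)}$ and not a higher norm, so the uniform $\bL^\infty$-control must come from the artificial cutoff \eqref{AS}, i.e. the bound $\|{\bfeta}_*^n\|_{\bH^s}\le 1/\delta_2$ with $s>3/2$ and Sobolev embedding --- this is precisely why the scheme is run with ${\bfeta}_*^n$ rather than ${\bfeta}^n$ inside $G$, and it makes all the constants $\delta$-dependent but $N$-independent. The second subtlety is the bookkeeping to ensure the "square" stochastic term truly contributes only $C\Delta t\sum\bE E^n$ (so Gronwall applies with a constant independent of $N$) and that the BDG-generated $\max_k E^k$ term is absorbed with a small enough constant; both require first taking the maximum over $m$ inside the expectation and only then applying BDG and Gronwall, rather than the other way around.
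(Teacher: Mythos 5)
Your proposal is correct and follows essentially the same route as the paper: sum \eqref{energy1} and \eqref{energy2}, take the maximum over time steps inside the expectation, control the quadratic noise term via the tower property (giving $\Delta t\,\Tr Q$ times $\bE\|G\|^2$, bounded through \eqref{growthG} and the cutoff bounds \eqref{AS}--\eqref{boundJ}), apply the discrete Burkholder--Davis--Gundy inequality to the martingale term with Young's inequality to absorb $\tfrac12\bE\max_n E^n$, and conclude with the discrete Gronwall lemma, after which (2)--(4) follow from the nonnegativity of $D^n$, $C_1^n$, $C_2^n$ in the telescoped inequality. This matches the paper's argument in both structure and in the key technical points you flag (the $\bL^\infty$ control of ${\bfeta}^n_*$ via the artificial cutoff and the order of max/BDG/Gronwall).
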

	\begin{proof} We add the energy estimates for the two subproblems \eqref{energy1} and \eqref{energy2} to obtain:
		\begin{equation}
			\begin{split}\label{discreteenergy}
				E^{n+1}+D^{n} &+C^n_1+C^{n}_2\leq E^{n}+ C\Delta t((P^n_{in})^2+(P^n_{out})^2) \\
				&+C\|\Delta_nW\|_{U_0}^2
				\|G(\bU^{n},{\bfeta}_*^n)\|_{L_2(U_0;\bL^2)}^2+\left|
				(G(\bU^{n},{\bfeta}_*^n)\Delta_n W, \bU^{n}) \right|.
			\end{split}
		\end{equation}
		Then for any $m\geq 1$, summing $0\leq n\leq m-1$ gives us
		\begin{equation}\label{energysum}
			\begin{split}
				&E^m+\sum_{n=0}^{m-1}D^{n} +\sum_{n=0}^{m-1}C_1^{n}+\sum_{n=0}^{m-1}C_2^{n}
				\leq E^0+C\,\Delta t\sum_{n=0}^{m-1}\left( (P^n_{{in}})^2 +(P^n_{{out}})^2\right) \\
				&+\sum_{n=0}^{m-1}\left|
				(G(\bU^n,{\bfeta}_*^n)\Delta_n W, \bU^{n} )\right|
				+\sum_{n=0}^{m-1} \|G(\bU^{n},{\bfeta}_*^n)\|_{L_2(U_0;\bL^2)}^2\|\Delta_nW\|_{U_0}^2.
		\end{split}\end{equation}
		
		Next we take supremum over $1 \leq m \leq N$ and then take expectation on both sides of \eqref{energysum}. We begin by treating the right hand side terms. %First we have,		$$\Delta t\sum_{n=0}^{N-1} (P^n_{{in/out}})^2\leq\|P_{in/out}\|^2_{L^2(0,T)} .$$
	We apply the discrete Burkholder-Davis-Gundy inequality (see e.g. Theorem 2.7 \cite{OPW22}) 
		and using \eqref{boundJ} and  $\bu^{n+\frac12}=\bu^n$, we obtain for some $C(\delta)>0$ that,
		\begin{align*}
			\bE&\max_{1\leq m\leq N}|\sum_{n=0}^{m-1}
			(G(\bU^{n},{\bfeta}_*^n)\Delta_n W, \bU^{n} )| \\
			&\leq {C}\bE\left[\Delta t\sum_{n=0}^{N-1}
			\|G(\bU^{n},{\bfeta}_*^n)\|^2_{L_2(U_0,\bL^2)}\left( \left\|(\sqrt{J_*^n})\bu^{n}\right\|^2_{\bL^2(\sO)}+\|\bv^n\|_{L^2(0,L)}^2\right) \right]^{\frac12}\\
			& \leq {C}{}\bE\left[ \left( \max_{0\leq m\leq N}\left\|(\sqrt{J_*^m})\bu^{m}\right\|^2_{\bL^2(\sO)}+\|\bv^m\|^2_{\bL^2(0,L)}\right) \sum_{n=0}^{N-1}
			\Delta t \left( \|\sqrt{J^n_*}\bu^{n}\|^2_{\bL^2(\sO)}+\|\bv^n\|_{\bL^2(0,L)}^2\right) \right]^{\frac12}\\
			&\leq \frac{1}2\|(\sqrt{J_0})\bu_0\|^2_{\bL^2(\sO_{})} +\frac12\|\bv_0\|^2_{\bL^2(0,L)}+ \frac12\bE\max_{1\leq m\leq N}\left[ \left\|(\sqrt{J_*^m})\bu^{m}\right\|^2_{\bL^2(\sO)}+\|\bv^m\|^2_{\bL^2(0,L)}\right] \\
			&+ {C \Delta t}\bE\left( \sum_{n=0}^{N-1}
			\|(\sqrt{J^n_*})\bu^n\|^2_{\bL^2(\sO)}+\|\bv^n\|_{\bL^2(0,L)}^2\right) ,
		\end{align*}

Using the tower property and \eqref{growthG} for each $n=0,...,m-1$ we obtain
		we write,
		\begin{align}
			\bE[\|G(\bU^n,{\bfeta}_*^n)\|_{L_2(U_0,\bL^2)}^2\|\Delta_nW\|_{U_0}^2]&=\bE[\bE[\|G(\bU^n,{\bfeta}_*^n)\|_{L_2(U_0,\bL^2)}^2\|\Delta_nW\|_{U_0}^2|\sF_n]]\notag\\
			&=\bE[\|G(\bU^n,{\bfeta}_*^n)\|_{L_2(U_0,\bL^2)}^2\bE[\|\Delta_nW\|_{U_0}^2\|\sF_n]]\notag\\
			&=\Delta t(\Tr \bQ)\bE\|G(\bU^n,{\bfeta}_*^n)\|^2_{L_2(U_0,\bL^2)}\notag\\
			&\leq C\Delta t(\Tr \bQ)\bE[\|\sqrt{J_*^n}\bu^n\|^2_{\bL^2(\sO)}+\|\bv^n\|^2_{\bL^2(0,L)}]\label{tower}.
		\end{align}
		Thus we obtain for some $C>0$ depending on $\delta$ and on $\Tr{\bQ}$, that the following holds:
		\begin{equation}
			\begin{split}\label{gronwall}
				\bE\max_{1\leq n\leq N}E^{n}&+ \bE\sum_{n=0}^{N-1}D^n +\bE\sum_{n=0}^{N-1}C_1^{n}+\bE\sum_{n=0}^{N-1}C_2^{n}\leq CE^0+C\|P_{in/out}\|^2_{L^2(0,T)}
				\\&+ C\Delta t \bE\left[\sum_{n=0}^{N-1} \|(\sqrt{J^n_*})\bu^n\|^2_{\bL^2(\sO)}+\|\bv^n\|^2_{\bL^2(0,L)}\right]\\
				&+ \frac12\bE\max_{1\leq n\leq N}\left[ \left\|(\sqrt{J_*^n})\bu^{n}\right\|^2_{\bL^2(\sO)}+\|\bv^n\|^2_{\bL^2(0,L)}\right].
			\end{split}
		\end{equation}
		Hence by absorbing the last term on the right of \eqref{gronwall} we obtain
		\begin{align*}
			\bE\max_{1\leq n\leq N}&\left( \|(\sqrt{J^n_*})\bu^n\|^2_{\bL^2(\sO)} +\|\bv^n\|^2_{\bL^2(0,L)}\right) \leq CE^0+ C\|P_{in/out}\|^2_{L^2(0,T)}\\
			& +\sum_{n=1}^{N-1} \Delta t\bE \max_{1\leq m\leq n}\left(\|(\sqrt{J^m_*})\bu^m\|^2_{\bL^2(\sO)}+\|\bv^m\|^2_{\bL^2(0,L)}\right) .
		\end{align*}
		Applying the discrete Gronwall inequality to $\bE\max_{1\leq n\leq N}(\|(\sqrt{J^n_*})\bu^n\|_{\bL^2(\sO)}^2+\|\bv^n\|^2_{\bL^2(0,L)})$ we obtain,
		$$\bE\max_{1\leq n \leq N}\left( \|(\sqrt{J^n_*})\bu^n\|_{\bL^2(\sO)}^2+\|\bv^n\|^2_{\bL^2(0,L)}\right) \leq C e^T,$$
		where $C$ depends only on the given data and in particular $\delta$.
		
		Hence for $E^n,D^n$ defined in \eqref{EnDn} we have,
		\begin{equation}
			\begin{split}
				\bE\max_{1\leq n\leq N}E^{n}+ \bE\sum_{n=0}^{N-1}D^n&+\bE\sum_{n=0}^{N-1}C_1^{n}+\bE\sum_{n=0}^{N-1}C_2^{n} \\
			&	\leq C(E^0 + \|P_{in}\|^2_{L^2(0,T)}+\|P_{out}\|^2_{L^2(0,T)}
				+ Te^T).
			\end{split}
		\end{equation}
	\end{proof}
	
	\subsection{Approximate solutions}\label{subsec:approxsol}
	In this subsection, we use the solutions $(\bu^{n+\frac{i}2},{\bfeta}^{n+\frac{i}2},\bv^{n+\frac{i}2})$, $i=0,1$, defined for every $N \in \mathbb{N}\setminus \{0\}$ at discrete times to define approximate solutions on the entire interval $(0,T)$. We start by introducing approximate solutions that are piece-wise constant on each sub-interval $ [n\Delta t, (n+1)\Delta t)$ as 
	\begin{align}\label{AppSol}
		\bu_{N}(t,\cdot)=\bu^n, \,
		{\bfeta}_{N}(t,\cdot)={\bfeta}^n, {\bfeta}_{N}^*(t,\cdot)={\bfeta}_*^{n},  \bv_{N}(t,\cdot)=\bv^n,  {\bv}^{\#}_{N}(t,\cdot)=\bv^{n+\frac12}.
	\end{align}
	Observe that all of the processes defined above are adapted to the given filtration $(\sF_t)_{t \geq 0}$.\\
	The following are time-shifted versions of the functions defined in \eqref{AppSol}: 
	\begin{align*}
		\bu^+_{N}(t,\cdot)=\bu^{n+1},\quad {\bfeta}^+_{N}(t,\cdot)={\bfeta}^{n+1}\quad  t \in (n\Delta t, (n+1)\Delta t].
	\end{align*}
	We also define the corresponding piece-wise linear interpolations as follows: for $t \in [t^n,t^{n+1}]$
	\begin{equation}
		\begin{split}\label{approxlinear}
			&\tilde\bu_{N}( t,\cdot)=\frac{t-t^n}{\Delta t} \bu^{n+1}+ \frac{t^{n+1}-t}{\Delta t} \bu^{n}, \quad \tilde \bv_{N}(t,\cdot)=\frac{t-t^n}{\Delta t} \bv^{n+1}+ \frac{t^{n+1}-t}{\Delta t} \bv^{n}\\
			& \tilde{\bfeta}_{N}( t,\cdot)=\frac{t-t^n}{\Delta t} {\bfeta}^{n+1}+ \frac{t^{n+1}-t}{\Delta t} {\bfeta}^{n}, \quad \tilde{\bfeta}^*_{N}( t,\cdot)=\frac{t-t^n}{\Delta t} {\bfeta}^{n+1}_*+ \frac{t^{n+1}-t}{\Delta t} {\bfeta}^{n}_*.
		\end{split}
	\end{equation}
	Observe that,% with a little abuse of notation,
	\begin{align}\label{etaderi}
		\frac{\partial\tilde{\bfeta}_{N}}{\partial t}=\bv^{\#}_{N},\quad \frac{\partial\tilde{\bfeta}^*_{N}}{\partial t}
		%={\cred\theta_\delta({\bfeta}^{+}_{N})}v^{\#}_{N}:=v_{N}^*
		={\sum_{n=0}^{N-1}\theta_\delta({\bfeta}^{n+1})}\bv^{\#}_{N}\chi_{(t^n,t^{n+1})}:=\bv_{N}^*
		\quad a.e. \text{ on } (0,T),  
	\end{align}{\s
		where $\bv^{\#}_N$ was introduced in \eqref{AppSol}.}
	\begin{lem}\label{bounds}
		Given
		$\bu_0 \in \bL^2(\sO_{})$, ${\bfeta}_0 \in  \bH^2_0(0,L)$, $\bv_0 \in \bL^2(0,L)$, 
		for a fixed $\delta$, we have that
		\begin{enumerate}
			\item $\{{\bfeta}_{N}\},\{{\bfeta}_{N}^*\}$ and thus $\{\tilde {\bfeta}_{N}\},\{\tilde{\bfeta}_{N}^*\}$ are bounded independently of $N$ and $\ep$ in\\ $L^2(\Omega;L^\infty(0,T;\bH^2_0(0,L)))$.
			\item $\{\bv_{N}\},\{\bv_{N}^{\#}\},\{\bv_{N}^{*}\}$ are bounded independently of $N$ and $\ep$ in $L^2(\Omega;L^\infty(0,T;\bL^2(0,L)))$.
%			\item $\{\sqrt{\ep}\bv_{N}^{\#}\},\{\sqrt{\ep}\bv_{N}^{*}\}$ are bounded independently of $N$ and $\ep$ in $L^2(\Omega;L^2(0,T;\bH^1(0,L)))$.
			\item $\{\bu_{N}\}$ is bounded independently of $N$ and $\ep$ in $L^2(\Omega;L^\infty(0,T;\bL^2(\sO)))$.
			\item $\{\bu^+_{N}\}$ is bounded independently of $N$ and $\ep$ in 
			$		 L^2(\Omega;L^2(0,T;V) \cap L^\infty(0,T;\bL^2(\sO))).$
			%	\item {\cred $\{\bv^+_{N}\}$ is bounded independently of $N$ in $L^2(\Omega;L^2(0,T;\bH^{1/2}(0,L)))$.}
			\item $\{\frac1{\sqrt{\ep}}\text{div}^{{\bfeta}^*_{N}}\bu^+_{N}\}$ is bounded independently of $N$ and $\ep$ in $L^2(\Omega;L^2(0,T;L^{2}(\sO)))$.
					\item $\{\bv^+_{N}\}$ is bounded independently of $N$ in 
				$		 L^2(\Omega;L^2(0,T;\bH^{\frac12}(0,L)) ).$
			\item $\{\sqrt{\ep}\bv^{\#}_{N}\}$ and $\{\sqrt{\ep}\bv^{*}_{N}\}$ are bounded independently of $N$ and $\ep$ in  $L^2(\Omega;L^2(0,T;{\bH^1_0(0,L)}))$.
				\end{enumerate} 
	\end{lem}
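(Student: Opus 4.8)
The plan is to read off each of the five bounds from the discrete estimates of Theorem~\ref{energythm} together with the pathwise, \emph{deterministic} control of the ALE maps recorded in \eqref{AS}--\eqref{boundJ}, transferring everything to the time-continuous interpolants of \eqref{AppSol}--\eqref{approxlinear}. The common reduction is that, since $\Delta t=T/N$ and each interpolant is either piecewise constant with nodal values among the $\bu^n,\bu^{n+1},\bv^n,\bv^{n+\frac12},{\bfeta}^n,{\bfeta}^n_*$ or a convex combination thereof, its $L^\infty$-in-time norm is bounded by a maximum of nodal norms and its $L^2$-in-time norm is a $\Delta t$-weighted Riemann sum of nodal norms.

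For (1)--(3) this is pure bookkeeping. The discrete energy $E^{n+\frac i2}$ of \eqref{EnDn} controls $\frac12\|{\bfeta}^{n+\frac i2}\|^2_{\bH^2_0(0,L)}+\frac12\|\bv^{n+\frac i2}\|^2_{\bL^2(0,L)}$, and also $\frac{\delta_1}{2}\|\bu^{n+\frac i2}\|^2_{\bL^2(\sO)}$, since $J^n_*=1+(\eta^n_*)_r\circ\varphi^{-1}_{{\bfeta}^n_*}>\delta_1$ by \eqref{AS} and \eqref{jacobian}; equivalently one invokes the estimate $\bE\max_n(\|\sqrt{J^n_*}\bu^n\|^2_{\bL^2(\sO)}+\|\bv^n\|^2_{\bL^2(0,L)})<C$ established inside the proof of Theorem~\ref{energythm}. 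Since $\bv^*_N$ satisfies $|\bv^*_N|\le|\bv^{\#}_N|$ pointwise (because $\theta_\delta\in\{0,1\}$ in \eqref{etaderi}), and since by \eqref{eta*} each ${\bfeta}^n_*$ equals some ${\bfeta}^k$ with $k\le n$, so that $\|{\bfeta}^n_*\|_{\bH^2_0(0,L)}\le\max_{k\le n}\|{\bfeta}^k\|_{\bH^2_0(0,L)}$, the bounds (1), (2), (3) and the $L^\infty(0,T;\bL^2(\sO))$ half of (4) all follow from $\bE\max_n E^n<C$ and $\bE\max_n E^{n+\frac12}<C$ (Theorem~\ref{energythm}(1)).

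For the $L^2(0,T;V)$ half of (4) and for (5), write $\int_0^T\|\bu^+_N\|^2_V\,dt=\Delta t\sum_{n=0}^{N-1}\|\bu^{n+1}\|^2_V$ and $\ep\int_0^T\|\text{div}^{{\bfeta}^*_N}\bu^+_N\|^2_{L^2(\sO)}\,dt=\Delta t\sum_{n=0}^{N-1}\ep\|\text{div}^{{\bfeta}^n_*}\bu^{n+1}\|^2_{L^2(\sO)}$. The last sum is $\le\sum_n D^n$ directly from \eqref{EnDn}, so (5) is immediate from Theorem~\ref{energythm}(2). For (4) it remains to dominate $\Delta t\sum_n\|\nabla\bu^{n+1}\|^2_{\bL^2(\sO)}$ by the dissipation: using $\nabla\bu^{n+1}=(\nabla^{{\bfeta}^n_*}\bu^{n+1})\nabla A^\omega_{{\bfeta}^n_*}$ together with $\|\nabla A^\omega_{{\bfeta}^n_*}\|_{\bL^\infty(\bar\sO)}\le C(\delta_2)$ from \eqref{boundJ}, it suffices to bound $\|\nabla^{{\bfeta}^n_*}\bu^{n+1}\|_{\bL^2(\sO)}$ by $\|\bD^{{\bfeta}^n_*}(\bu^{n+1})\|_{\bL^2(\sO)}$; changing variables via $A^\omega_{{\bfeta}^n_*}$ turns this into Korn's inequality on $\sO_{{\bfeta}^n_*}$, after which $J^n_*>\delta_1$ lets one pull back. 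Granting a Korn constant $C(\delta)$ independent of $\omega$ and $n$, one gets $\Delta t\sum_n\|\nabla\bu^{n+1}\|^2_{\bL^2(\sO)}\le C(\delta)\sum_n D^n$, and taking expectations and adding the $L^\infty(0,T;\bL^2(\sO))$ bound already in hand finishes (4).

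The single non-routine point is the $\omega$- and $n$-uniformity of that Korn constant. What makes it work is that the cut-off of Section~\ref{sec:splitscheme} imposes, for \emph{every} realization, the deterministic bounds $\inf_\sO J^n_*>\delta_1$, $\|{\bfeta}^n_*\|_{\bH^s(\Gamma)}\le 1/\delta_2$ of \eqref{AS} and the Lipschitz/$\bL^\infty$ bounds \eqref{boundvarphi}--\eqref{boundJ} on the maps $\varphi_{{\bfeta}^n_*}$ and $A^\omega_{{\bfeta}^n_*}$; these are exactly the geometric data that make Korn's inequality (and the associated trace and Poincar\'e inequalities) on the random family $\{\sO_{{\bfeta}^n_*}\}$ hold with a constant depending on $\delta$ alone rather than on the path. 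This is the place where the artificial-displacement construction pays off and where the estimates would break without it; the rest of the lemma is the bookkeeping above.
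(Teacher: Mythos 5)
Your proof is correct and follows essentially the same route as the paper: (1)--(3) and (5) are read off from the discrete energy and dissipation estimates of Theorem~\ref{energythm} using $J^n_*>\delta_1$, and (4) is obtained from $\nabla\bu^{n+1}=(\nabla^{{\bfeta}^n_*}\bu^{n+1})\nabla A^\omega_{{\bfeta}^n_*}$, the bound \eqref{boundJ}, and a Korn inequality on the moving domains with a constant uniform in $\omega$ and $n$. The only point you assert rather than prove is that uniform Korn constant; the paper supplies it by citing Lemma 1 of \cite{V12} together with the $\bW^{1,\infty}$-compactness of the family $\{A^\omega_{{\bfeta}^*_N}(t)\}$, which is exactly the consequence of the deterministic cut-off bounds you invoke.
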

	\begin{proof}
		In order to prove (4) observe that for each $\omega\in \Omega$, $\nabla\bu^{n+1}=\nabla^{{\bfeta}^{n}_*}\bu^{n+1} (\nabla A^\omega_{{\bfeta}^{n}_*})$. Thus we have, 
		\begin{align*}
			\delta_1 \bE\int_{\sO}|\nabla\bu^{n+1}|^2d\bx &\leq  \bE\int_{\sO}(J_*^{n})|\nabla\bu^{n+1}|^2d\bx=\bE\int_{\sO}(J^{n}_*)|\nabla^{{\bfeta}_*^{n}}\bu^{n+1}\cdot \nabla A^\omega_{{\bfeta}_*^{n}}|^2d\bx\\
			&\leq C(\delta)\bE\int_{\sO}(J^{n}_*)|\nabla^{{\bfeta}_*^{n}}\bu^{n+1}|^2d\bx \leq {K}C(\delta)\bE\int_{\sO}(J_*^{n})|\bD^{{\bfeta}_*^{n}}\bu^{n+1}|^2d\bx,
		\end{align*}
		where $K>0$ is the universal Korn constant that depends only on the reference domain $\sO$. This result follows from Lemma 1 in \cite{V12} and because of uniform bounds \eqref{boundJ} which imply that $\{ A^\omega_{\bfeta^*_N}(t); \,\omega\in\Omega, \,t\in [0,T] \}$ is compact in $\bW^{1,\infty}(\sO)$.
		Thus, there exists $C>0$, independent of $N$, such that
		\begin{align}\label{uboundV}
			\bE\int_0^T\int_{\sO}|\nabla\bu^{+}_{N}|^2d\bx ds =\bE\sum_{n=0}^{N-1} \Delta t\int_{\sO}|\nabla\bu^{n+1}|^2d\bx \leq C(\delta).
		\end{align}
			Statement (6) is then a result of Statement (4) and the fact that, by construction, $\bv_N^+$ is the trace of $\bu^+_N$ on $\Gamma$. 
			The proofs of the rest of the statements follow directly from Theorem \ref{energythm}. 
			\iffalse
				%Estimate (5) follows immediately from (4) since $\bv^+_{N}={\cred(\bv^+_{N}\cdot\tau^{\bfeta})\tau^{\bfeta}}+(tr(\bu^+_{N})\cdot\bn^{\bfeta})\bn^{\bfeta}$ is the trace of the vertical component of the time-shifted fluid velocity $\bu^+_{N}$ on the top lateral boundary.
				 
					Next let, $D^s_h(\bv)=\frac{\bv(z+h)-\bv(z)}{|h|^{s-1}h}$.	
				Then by extending $\bv^{\#}_N$ and $\bv^{+}_N$ by zero outside of $[0,L]$, taking $\bfpsi=D^s_{-h}D^s_{h}\bv^{n+\frac12}$ in \eqref{first} and writing $\bfeta^{n+\frac12}=\bfeta^{n+1}$, we find
				\begin{align*}
					&	\int_0^L \left( \bv^{n+\frac12}-\bv^n\right)  D^s_{-h}D^s_{h}\bv^{n+\frac12} dz + \langle \sL_e({\bfeta}^{n+1}),D^s_{-h}D^s_{h}({\bfeta}^{n+1}-{\bfeta}^n) \rangle=0,\\
					\Rightarrow	&\int_0^L \left( D^s_{h}\bv^{n+\frac12}-D^s_{h}\bv^n\right)  D^s_{h}\bv^{n+\frac12} dz + \langle D^s_{h}\partial_{zz}({\bfeta}^{n+1}),D^s_{h}\partial_{zz}({\bfeta}^{n+1}-{\bfeta}^n) \rangle=0,\\
					\Rightarrow &\int_0^L | D^s_{h}\bv^{n+\frac12}|^2-|D^s_{h}\bv^n|^2+ | D^s_{h}\bv^{n+\frac12}-D^s_{h}\bv^{n}  |^2 \\
					&+  \int_0^L |D^s_{h}\partial_{zz}({\bfeta}^{n+\frac12})|^2-|D^s_{h}\partial_{zz}{\bfeta}^{n}|^2+|D^s_{h}\partial_{zz}{\bfeta}^{n+1}-D^s_{h}\partial_{zz}{\bfeta}^{n}|^2 =0,\\	
					\Rightarrow &\int_0^L | D^s_{h}\bv^{n+\frac12}|^2+\int_0^L |D^s_{h}\partial_{zz}({\bfeta}^{n+1})|^2\leq \int_0^L |D^s_{h}\bv^n|^2 +\int_0^L |D^s_{h}\partial_{zz}({\bfeta}^{n})|^2			
				\end{align*}
				We sum $0\leq n \leq N-1$ on both sides and multiply by $\Delta t$. This gives us,
				\begin{align*}
	\int_0^T	\int_0^L | D^s_{h}\bv^{\#}_N|^2 \leq \int_0^T\int_0^L |D^s_{h}\bv^+_N|^2.% +	(\Delta t)\int_0^L |D^s_{h}\partial_{zz}({\bfeta}^{0})|^2
				\end{align*}
	\fi
\end{proof}
	\section{Passing $N\rightarrow\infty$}\label{sec:limit}
	In this section we will obtain almost sure convergence of the stochastic approximate solutions, which is required to be able to pass $N \rightarrow \infty$. We use compactness arguments and establish tightness of the laws of the approximate random variables defined in Section \ref{subsec:approxsol}.
\subsection{{ Tightness results}} \label{sec:tight}
Given our stochastic setting we do not expect the fluid and structure velocities to be differentiable.
The tightness results, i.e. Lemmas \ref{tightuv} and \ref{tightl2} below, will rely on an application of the Aubin-Lions theorem and the following variant \cite{Tem95}:
\begin{lem}\label{compactLp}
	Let	 the translation in time by $h$ of a function $f$ be denoted by:
	$$T_h f(t,\cdot)=f(t-h,\cdot), \quad h\in \R.$$ Assume that $\mathcal{Y}_0\subset\mathcal{Y}\subset\mathcal{Y}_1$ are Banach spaces such that $\mathcal{Y}_0$ and $\mathcal{Y}_1$ are reflexive with compact embedding of $\mathcal{Y}_0$ in $\mathcal{Y}$, 
	then for any $m>0$, the embedding	$$ \{\bu:\bu \in L^2(0,T;\mathcal{Y}_0):\sup_{0<h< T} \frac1{h^m}\|T_h\bu-\bu\|_{L^2(h,T;\sY_1)}<\infty\} \hookrightarrow L^2(0,T;\mathcal{Y}),$$	is compact.
\end{lem}

\begin{lem}\label{tightuv} The laws of $\bu^+_N$ and $\bv^+_N$ are tight in $L^2(0,T;\bH^\alpha(\sO))$, and $ L^2(0,T;\bH^{\beta}(0,L))$, respectively for any $0\leq \alpha<1$ and $0\leq \beta<\frac12$. 
\end{lem}
\begin{proof} 
	%	We begin by introducing the translation in time by $h$ of a function $f$:	$$T_h f(t,\cdot)=f(t-h,\cdot), \quad h\in \R.$$
	The aim of this proof is to apply Lemma \ref{compactLp} %Theorem 13.3 in \cite{Tem95} 
	by obtaining appropriate bounds for
	$$\int_h^{T} \|T_h\bu_N-\bu_N\|^2_{\bL^2(\sO)}+\|T_h\bv_N-\bv_N\|^2_{\bL^2(0,L)}=	(\Delta t)\sum_{n=j}^N \|\bu^n-\bu^{n-j}\|^2_{\bL^2(\sO)} + \|\bv^n-\bv^{n-j}\|^2_{\bL^2(0,L)},$$
	for any $N$ and $h$. Here $1\leq j \leq N$ such that $h=j\Delta t-s$ for some  $s<\Delta t$.
	
	To achieve this goal, we will construct appropriate test functions for equations \eqref{first} and \eqref{second} that will result into the term on the right hand side of the equation above. This has to be done carefully since the fluid variables are all defined on different domains and because we can not test equation \eqref{first}$_3$ with $\bv^k$, as it does not have the required regularity. Thus, we will use space mollification to arrive at the desired test function. For the mollified functions to still satisfy the required 0 boundary conditions on $\partial\sO\setminus \Gamma$ we will apply an appropriate horizontal squeezing operator (see \cite{CDEG}, \cite{MC19}, \cite{TC23}).
	
	Hence our plan is as follows: assume that $\sO_\delta=(0,L)\times (0,R(\delta))$ is the maximal rectangular domain  consisting of all the fluid domains associated with the structure displacements $\bfeta^*_N$ for any $N$ and $t,\omega$. We fix an $N$ and $\omega\in\Omega$, and for any $0\leq k,n \leq N$ we pull back $\bu^k$ on the physical domain $\sO_{\bfeta^{k-1}_*}$, extend it, then "squeeze" it horizontally in a way that its divergence is preserved, mollify it in a way that 0 boundary conditions are preserved, and then push it to the domain $\sO$ via the ALE map $A^\omega_{\bfeta^n_*}$. This way we can transform $(\bu^k,\bv^k)$ in a way that it can be used as a test function for the equations \eqref{first} and \eqref{second} for $(\bu^n,\bv^n)$.
	
	First, we let $ \tilde\bu_{}^k=\bu^k\circ A^{-1}_{\bfeta^{k-1}_*}$. Observe that there exists $\bar\bu^k$, a divergence free extension of $\tilde\bu^k$ such that $\bar\bu^k=0$ when $z=0,L$, $\bar\bu^k|_{\Gamma_{\bfeta^{k-1}_*}}=\bv^k$ and $\bar\bu^k=(0,\bar v^k)$ when $r=R(\delta)$ where $\bar v^k=\bv^k\times \partial_z\bfeta^{k-1}_*$ such that $\|\bar\bu^k\|_{\bH^1(\sO_\delta)} \leq C\|\tilde\bu^k\|_{\bH^1(\sO_{\bfeta^{k-1}_*})}$ where $C>0$ depends only on $\delta$.	
	These boundary conditions ensure that $\int_{\partial(\sO_\delta\setminus \sO_{\bfeta^{k-1}_*})}\bar\bu^k\cdot\bn^{\bfeta^{k-1}_*}=0$.
	
	 Next we denote by $\bu^{k,ext}$ its  extension by $\bar\bu^k$ in $\sO_\delta\setminus\sO_{\bfeta^{k-1}_*}$ and 0 everywhere else.
	
	Then for 
	$1<\sigma<2$, we define the squeezed version as,
	\begin{align*}
		\bu^{k}_\sigma(z,r)&:= (u^{k,ext}_z,\sigma u^{k,ext}_r)\circ \sigma (z,r) , \quad \text{ where }
		\sigma(z,r)= \sigma(z-\frac{L}2)+\frac{L}2 .
	\end{align*}
		Observe that $\|\bu_{\sigma}^k\|_{\bH^1(\sO_\delta)} \leq C(\delta) \|\bu^{k,ext}\|_{\bH^1(\sO_\delta)} \leq C(\delta) \|\bu^k\|_{\bH^1(\sO)}$.

 Note here that we scale the $z$--coordinate of the function i.e. squeeze it horizontally, so that mollification in the next step does not ruin its boundary conditions at $\Gamma_{l/r}$. Observe that we need to "squeeze" the function $\bu^{k,ext}$ around the bottom boundary $\Gamma_b$ as well and it can be done using the same argument. However we choose leave it out of our discussion for a cleaner presentation.
	
	As we look for an appropriate test function, we introduce a space regularization  $\bu^k_{\sigma,\lambda}$, using standard 2D mollifiers. Finally,	 for any $\lambda<\frac{L(\sigma-1)}{2\sigma}$  we define 
	\begin{align}
		\bu^{k,n}_{\sigma,\lambda}=\bu^k_{\sigma,\lambda}\circ A^\omega_{\bfeta^n_*}.
	\end{align}
	Hence we have that $\bu^{k,n}_{\sigma,\lambda}$, for any $n-j\leq k\leq n$, satisfies the correct boundary conditions on $\partial\sO\setminus\Gamma$ and, \begin{align}\label{kton}\|\text{div}^{\eta^n_*}\bu^{k+1,n}_{\sigma,\lambda}\|_{L^2(\sO)}\leq {C(\delta)}{}\|\text{div}^{\eta^k_*}\bu^{k+1}\|_{L^2(\sO)},%\quad \text{ and} \quad \bu^{k,n}_{\sigma,\lambda}|_\Gamma=(0, v^{k}_{\sigma,\lambda}),
	\end{align}
For the rest of the proof we will fix,
$\sigma-1=h^{\frac16}$ and $\lambda=\frac{L}{4}h^{\frac16}.$
We define $\bv^{k,n}_{\sigma,\lambda}(z)=\bu^{k,n}_{\sigma,\lambda}(z,1)$ for $z\in [0,L]$. Now, for simplicity of notation, let  $ \bfvarphi_n(z)=(z+(\eta^n_*)_z,1+(\eta^n_*)_r)$ and observe that,
\begin{align}	\notag\|\partial_{zz}\bv^{k,n}_{\sigma,\lambda}&\|_{\bL^2(0,L)}=\| \partial_{zz}(\bu^k_{\sigma,\lambda}\circ \bfvarphi_n)\|_{\bL^2(0,L)}\\
	&\notag\leq C(\delta)\|\bD^2\bu^k_{\sigma,\lambda}\circ \bfvarphi_n\|_{\bL^2(0,L)}+\|\nabla\bu^k_{\sigma,\lambda}\circ \bfvarphi_n\partial_{zz} \bfvarphi_n\|_{\bL^2(0,L)}\\
	&\notag\leq C(\delta)\|\bD^2\bu^k_{\sigma,\lambda}\circ \bfvarphi_n\|_{\bL^2(0,L)}+\|\nabla\bu^k_{\sigma,\lambda}\circ \bfvarphi_n\|_{\bH^\alpha(0,L)}\|\partial_{zz} \bfvarphi_n\|_{\bL^2(0,L)}, \,\,\quad \alpha>\frac12\\
	&\leq C(\delta)(\|\bD^2\bu^k_{\sigma,\lambda}\|_{\bH^\alpha(\sO_{\bfeta^n_*})}+\|\nabla\bu^k_{\sigma,\lambda}\|_{\bH^{\alpha+\frac12}(\sO_{\bfeta^n_*})}\|\partial_{zz}\bfeta^n_*\|_{\bL^2(0,L)}),\quad \frac12<\alpha<1.\label{vh2}
\end{align}
In the last estimate, the constant depends only on $\delta$ thanks to the ideas developed in the trace theorem in \cite{G08} applied along with the results of \cite{D96}.
We know that $\|\phi-\phi_\lambda\|_{L^2} \leq \lambda^m\|\phi\|_{H^m}$ and that
$\|T_h\bfeta^*_N-\bfeta^*_N\|_{L^\infty(0,T;\bL^\infty(0,L))}\leq \|\tilde\bfeta^*_N\|_{C^{0,\frac14}(0,T;\bL^\infty(0,L))} h^\frac14$.
Hence, %$\|\bu^{k,ext}\|^2_{\bH^s(\sO_\delta^{\sigma,\lambda})} \leq C\|\bu^{k,ext}\|^2_{\bH^s(\sO_\delta)}\leq C( \|\bu^k\|^2_{\bH^1(\sO)} +\|v^k\|^2_{H^\frac12(0,L)})$. Thus, \eqref{u1} implies, 
for any $n-j\leq k\leq n$, we obtain
\begin{align}
	\notag	\|\bu^{k,n}_{\sigma,\lambda}-\bu^{k}&\|_{\bL^2(\sO)} \leq \|\bu^{k}_{\sigma,\lambda}-\bu^{k}_\sigma\|_{\bL^2(\sO_{\bfeta^n_*})}+\|\bu^k_\sigma-\bu^{k,ext}\|_{\bL^2(\sO_{\bfeta^n_*})}+\|\bu^{k,ext}\circ(A_{\bfeta^n_*}-A_{\bfeta^{k-1}_*})\|_{\bL^2(\sO)}\\
	\notag	& \leq   \lambda\|\bu^{k}_\sigma\|_{\bH^1(\sO_{\bfeta^n_*})} +(\sigma-1)^{\frac12}\|\nabla\bu^{k,ext}\|_{\bL^2(\sO_\delta)}
	+\|\nabla\bu^{k,ext}\|_{\bL^2(\sO_\delta)}\|\bfeta^n_*-\bfeta^{k-1}_*\|_{\bL^\infty(0,L)}\\
	& \leq h^{\frac1{12}} \|\bu^{k}\|_{\bH^1(\sO)}
	+h^{\frac14}\|\tilde\bfeta^*_N\|_{C^{0,\frac14}(0,T;\bL^\infty(0,L))}\|\bu^{k}\|_{\bH^1(\sO)}. \label{u2}
\end{align}
Here, for the second term, we also used  Lemma 4.7 in \cite{MC19} (see also the following computation).
%	Similarly,		Indeed, let $\sigma^*(z)=\sigma(z-\frac{L}2)+\frac{L}2$ and recall that $\tilde v^k$ is the extension of $v^k$ by 0. Then $\|\tilde v^k\|_{H^s(\R)} \leq \|v^k\|_{H^{\frac12}(0,L)}$ for any $s<\frac12$. 	
%Recall that $\bv^{k,n}_{\sigma,\lambda}$ is the trace of $\bu^{k}_{\sigma,\lambda}$ on $\Gamma_{\bfeta^n_*}$.
 Let $ \bfvarphi_n(z)=(z+(\eta^n_*)_z,1+(\eta^n_*)_r)$. %Let $\phi_n(z)=(1+(\eta^{n}_*)_r(\varphi_{\bfeta^{n}_*}^{-1}(z)))$.
 We then obtain,
 \iffalse
\begin{align}
		\notag	\|&\bv^{k,n}_{\sigma,\lambda}-\bu^{k,ext}|_{\Gamma_{\bfeta^{n}_*}}\|^2_{\bL^2(0,L)}=\int_{0}^L\Big|\int_{B(0,1)}\rho(y)(\bu^k_{\sigma}(\bfvarphi_n(z)+\lambda y)-\bu^{k,ext}(\bfvarphi_n(z))) dy\Big|^2 dz\\
		&\notag=\int_{0}^L\Big|\int_{B(0,1)}\rho(y)(\bu^{k,ext}(\sigma(\bfvarphi_n (z))+\lambda \sigma(y)) -\bu^{k,ext}(\bfvarphi_n(z))) dy\Big|^2 dz\\
		&\notag=\int_{0}^L\int_{B(0,1)}\frac{|(\bu^{k,ext}(\sigma(\bfvarphi_n (z))+\lambda \sigma(y)) -\bu^{k,ext}(\bfvarphi_n(z)))|^2}{|(\sigma(\bfvarphi_n (z))+\lambda \sigma(y))-\bfvarphi_n(z)|^{2+2s}}|(\sigma(\bfvarphi_n (z))+\lambda \sigma(y))-\bfvarphi_n(z)|^{2+2s} dy dz\\
			&\notag=|(\sigma-1)+\lambda |^{2+2s}\int_{0}^L\int_{B(0,1)}\frac{|(\bu^{k,ext}(\sigma(\bfvarphi_n (z))+\lambda \sigma(y)) -\bu^{k,ext}(\bfvarphi_n(z)))|^2}{|(\sigma(\bfvarphi_n (z))+\lambda \sigma(y))-\bfvarphi_n(z)|^{2+2s}} dy dz\\		
				&\notag \leq C(\delta)\int_{0}^L\int_{B(0,1)}\Big|\nabla\bu^{k,ext}\Big|^2\Big|\left( \sigma(\bfvarphi_n(z))-\bfvarphi_n(z)+\lambda \sigma y\right)  \Big|^2 dy dz\\
			&\leq C(\delta,L)((\sigma-1)^2+\lambda^2 )  \int_{\sO^{}_\delta}\Big|\nabla\bu^{k,ext}\Big|^2 dy \leq Ch^{\frac12}\|\bu^k\|^2_{\bH^1(\sO)}. \label{v1} 
		\end{align}
		\fi
\begin{align}
	\notag	\|&\bv^{k,n}_{\sigma,\lambda}-\bu^{k,ext}|_{\Gamma_{\bfeta^{n}_*}}\|^2_{\bL^2(0,L)}=\int_{0}^L\Big|\int_{B(0,1)}\rho(y)(\bu^k_{\sigma}(\bfvarphi_n(z)+\lambda y)-\bu^{k,ext}(\bfvarphi_n(z))) dy\Big|^2 dz\\
	&\notag=\int_{0}^L\Big|\int_{B(0,1)}\rho(y)(\bu^{k,ext}(\sigma(\bfvarphi_n (z))+\lambda \sigma(y)) -\bu^{k,ext}(\bfvarphi_n(z))) dy\Big|^2 dz\\			&\notag=\int_{0}^L\int_{B(0,1)}|(\bu^{k,ext}(\sigma(\bfvarphi_n (z))+\lambda \sigma(y)) -\bu^{k,ext}(\bfvarphi_n(z)))|^2 dy dz\\		&\notag\leq\int_{0}^L\int_{B(0,1)}\Bigg|\int^{\sigma(\bfvarphi_n (z))+\lambda \sigma(y)}_{\bfvarphi_n(z)}|\nabla \bu^{k,ext}(w)|dw\Bigg|^2 dy dz\\	
	&\notag\leq\int_{0}^L\int_{B(0,1)}|{\sigma(\bfvarphi_n (z))+\lambda \sigma(y)}-{\bfvarphi_n(z)}|\int^{\sigma(\bfvarphi_n (z))+\lambda \sigma(y)}_{\bfvarphi_n(z)}|\nabla \bu^{k,ext}(w)|^2dw  dy dz\\	
	%&\notag \leq C(\delta)\int_{0}^L\int_{B(0,1)}\Big|\nabla\bu^{k,ext}\Big|^2\Big|\left( \sigma(\bfvarphi_n(z))-\bfvarphi_n(z)+\lambda \sigma y\right)  \Big|^2 dy dz\\
	&\leq C(\delta,L)((\sigma-1)+\lambda )  \int_{\sO^{}_\delta}|\nabla\bu^{k,ext}|^2 dzdr \leq Ch^{\frac16}\|\bu^k\|^2_{\bH^1(\sO)}.\label{v1}
\end{align}
		
	Then, adapting ideas from \cite{CDEG}, for any $n \leq N$ we "test" \eqref{first}$_3$ and \eqref{second}$_2$ with 
	\begin{align}\label{Qn}
		\bQ_n:=(\bq_n,\bfpsi_n)=\left( (\Delta t) \sum_{k=n-j+1}^n\bu^{k,n}_{\sigma,\lambda},\,\, (\Delta t) \sum_{k=n-j+1}^n\bv_{\sigma,\lambda}^{k,n}\right) .
	\end{align}
	
	This gives us
	\begin{align*}	 	
		&\int_{\sO}\left((J^{n+1}_*) \bu^{n+1}-(J^{n}_*)\bu^{n}\right) \left( \Delta t \sum_{k=n-j+1}^n\bu^{k,n}_{\sigma,\lambda}\right)   
		+\int_0^L(\bv^{n+1}-\bv^{n} )\left( \Delta t \sum_{k=n-j+1}^n\bv_{\sigma,\lambda}^{k,n}\right) \\
		&-\frac{1}2\int_\sO \left( J_*^{n+1}-J_*^n\right)  \bu^{n+1}\cdot \left( \Delta t \sum_{k=n-j+1}^n\bu^{k,n}_{\sigma,\lambda}\right) +(\Delta t) b^{\bfeta_n^*}(\bu^{n+1},\bw_*^n,\left( \Delta t \sum_{k=n-j+1}^n\bu^{k,n}_{\sigma,\lambda}\right) ) \\
		&+ \frac{(\Delta t)}{\ep}\int_\sO 
		\text{div}^{\bfeta^n_*}\bu^{n+1}\text{div}^{\bfeta^n_*}\left( \Delta t \sum_{k=n-j+1}^n\bu^{k,n}_{\sigma,\lambda}\right) +(\Delta t)\left(  \sL_e(\bfeta^{n+\frac12}) , \left( \Delta t \sum_{k=n-j+1}^n\bv_{\sigma,\lambda}^{k,n}\right) \right) \\
		&
		+2\nu(\Delta t)\int_{\sO}J^n_* \bD^{\eta_*^{n}}(\bu^{n+1})\cdot \bD^{\eta_*^{n}}\left( \Delta t \sum_{k=n-j+1}^n\bu^{k,n}_{\sigma,\lambda}\right)  
		 + \ep(\Delta t)\int_0^L\partial_z\bv^{n+\frac12} \partial_z \left( \Delta t \sum_{k=n-j+1}^n\bv^{k,n}_{\sigma,\lambda}\right) \\
		&= (\Delta t)\left( P^n_{{in}}\int_{0}^1q^n_z\Big|_{z=0}dr-P^n_{{out}}\int_{0}^1q^n_z\Big|_{z=1}dr\right) +  
		(G(\bU^{n},\bfeta_*^n)\Delta_n W, \bQ_n).
	\end{align*}
	Observe that summation by parts formula gives us for the first two terms,
	\begin{align*}
		&-\sum_{n=0}^N	\int_{\sO}\left((J^{n+1}_*) \bu^{n+1}-(J^{n}_*)\bu^{n}\right) \left( \Delta t \sum_{k=n-j+1}^n\bu^{k,n}_\sigma\right)   
		+\int_0^L(\bv^{n+1}-\bv^{n} )\left( \Delta t \sum_{k=n-j+1}^n\bv_{\sigma,\lambda}^{k,n}\right) \\
		&=(\Delta t)\sum_{n=1}^N\left( \int_{\sO}(J^{n}_*) \bu^{n}(\bu^{n,n}_{\sigma,\lambda}-\bu_{\sigma,\lambda}^{n-j,n})d\bx+\int_0^L \bv^n(\bv_{\sigma,\lambda}^{n,n}-\bv_{\sigma,\lambda}^{n-j,n})dz\right)\\
		&- \int_\sO (J^{N+1}_*)\bu^{N+1} \left( \Delta t \sum_{k=N-j+1}^N\bu^{k,N}_\sigma\right)-\int_0^L \bv^N \left( \Delta t \sum_{k=N-j+1}^N\bv_{\sigma,\lambda}^{k,n}\right),
		\shortintertext{where the first term on the right side can be written as,}
		&(\Delta t)\sum_{n=0}^N\left(
		\int_{\sO}(J^{n}_*) \bu^{n}(\bu^{n}-\bu^{n-j})d\bx+ \int_{\sO}(J^{n}_*) \bu^{n}(\bu^{n,n}_\sigma-\bu^{n}-(\bu_\sigma^{n-j,n}-\bu^{n-j}))d\bx\right)\\
		&+(\Delta t)\sum_{n=0}^N\int_0^L \bv^n(\bv^n-\bv^{n-j})dz+\int_0^L \bv^n(\bv^{n,n}_{\sigma,\lambda}-\bv^n-(\bv^{n-j,n}_{\sigma,\lambda}-\bv^{n-j}))\\
		&=(\Delta t)\sum_{n=0}^N\left(
		\frac12\int_{\sO}(J^{n}_*) (|\bu^{n}|^2-|\bu^{n-j}|^2+|\bu^{n}-\bu^{n-j}|^2)d\bx\right) \\
		&+(\Delta t)\sum_{n=0}^N \left( \int_{\sO}(J^{n}_*) \bu^{n}(\bu^{n,n}_\sigma-\bu^{n}-(\bu_\sigma^{n-j,n}-\bu^{n-j}))d\bx\right)\\
		&+(\Delta t)\sum_{n=0}^N\int_0^L\frac12 (|\bv^n|^2-|\bv^{n-j}|^2+|\bv^n-\bv^{n-j}|^2)+ \bv^n(\bv^{n,n}_{\sigma,\lambda}-\bv^n-(\bv^{n-j,n}_{\sigma,\lambda}-\bv^{n-j})).
	\end{align*}	 
	where we set $\bu^n=0$ and $\bv^n=0$ for $n<0$ and $n>N$.
	Observe that the right hand side above will give us the desired terms. In what follows we will treat rest of the terms. First,
	\begin{align*}
	I_0:=	\Delta t\sum_{n=0}^N\int_0^L |\bv^n|^2-|\bv^{n-j}|^2 =\Delta t \sum_{n=N-j+1}^{N}\|\bv^n\|^2_{\bL^2(0,L)}\leq h\max_{0\leq n \leq N}\|\bv^n\|^2_{\bL^2(0,L)}.
	\end{align*} 
	Hence,
	\begin{align*}
		\bP(|I_0|>M) \leq \frac{h}M \bE[\max_{0\leq n \leq N}\|\bv^n\|^2_{\bL^2(0,L)}] \leq \frac{Ch}{M}.
	\end{align*}
	Before moving on to the next term, we recall that for two matrices $A$ and $B$, the derivative of the determinant $D(det)(A)B=det(A)tr(BA^{-1})$. Hence applying the mean value theorem to $det(\nabla A_{\bfeta^n_*})-det(\nabla A_{\bfeta^{n+j}_*})$, using \eqref{boundJ} and the fact that $det(A) \leq (\max A_{ij})^2$, we obtain, for some $\beta\in[0,1]$, that (for details see (73) in \cite{MC16})
\begin{equation}
	\begin{split}\label{boundJt}
		|{J^{n+j}_*-J^{n}_*}|&= |\text{det}(\nabla A^{n,\beta})\nabla^{n,\beta}\cdot(A_{\bfeta^{n+j}_*}-A_{\bfeta^{n}_*})|\\
		& \leq C(\delta)\|A_{\bfeta^{n+j}_*}-A_{\bfeta^{n}_*}\|_{\bC^1(\sO)},
	\end{split}
\end{equation}
	where $\nabla^{n,\beta}=\nabla^{\bfeta^n_*} +\beta(\nabla^{\bfeta^{n+j}_*}-\nabla^{\bfeta^n_*})$ and $\nabla A^{n,\beta}=\nabla A_{\bfeta^n_*} +\beta(\nabla A_{\bfeta^{n+j}_*}-\nabla A_{\bfeta^n_*})$.\\
Hence, using \eqref{boundJ} again, we find the following bounds for any $s>\frac32$,
	\begin{align*}
		I_1&:=	(\Delta t)\sum_{n=0}^N\left(
		\int_{\sO}(J^{n}_*) (|\bu^{n}|^2-|\bu^{n-j}|^2)d\bx\right)\\
		&=	(\Delta t)\left( \sum_{n=N-j+1}^N\int_{\sO}(J^{n}_*)|\bu^n|^2 d\bx	+\sum_{n=0}^{N-j}\int_{\sO}(J^{n}_*-J^{n+j}_*) |\bu^{n}|^2d\bx\right) \\
	%	&=	\frac{(\Delta t)}2\left( \sum_{n=N-j+1}^N\int_{\sO}(J^{n}_*)|\bu^n|^2 d\bx	+\sum_{n=0}^{N-j}\int_{\sO}\left( det(\nabla A_{\bfeta^n_*}-\nabla A_{\bfeta^{n+j}_*})-tr(\nabla A_{\bfeta^n_*}\nabla A_{\bfeta^{n+j}_*})\right)  |\bu^{n}|^2d\bx\right) \\
					&\leq	(\Delta t) \sum_{n=N-j+1}^N\int_{\sO}(J^{n}_*)|\bu^n|^2 d\bx	+	(\Delta t)\sum_{n=0}^{N-j}\| A_{\bfeta^{n+j}_*}- A_{\bfeta^n_*}\|_{\bC^1(\sO)} \|\bu^{n}\|^2_{\bL^2(\sO)} \\
		&\leq   \left( h\sup_{1\leq n \leq N}\int_{\sO}(J^{n}_*)|\bu^n|^2 d\bx+ \sup_{0\leq k \leq N-1}\|\bu^{n}\|^2_{\bL^2(\sO)}h^{\frac14}\| \bfeta^{n+j}_*-\bfeta^n_*\|_{\bH^s(0,L)} \right)\\
		&\leq  h^{\frac14}  \sup_{0\leq k \leq N-1}\|\bu^{n}\|^2_{\bL^2(\sO)}\left(1+\|\tilde \bfeta^*_N\|_{C^{0,\frac14}(0,T;\bH^s(0,L))} \right).		
	\end{align*}
	Hereon we will repeatedly use the fact that for any two positive random variables $A$ and $B$, $\{\omega: A+B>M\} \subseteq \{\omega: A>\frac{M}2\}\cup \{\omega: B>\frac{M}2\}$ which implies that
	\begin{align*}
		\bP(|A|+|B|>M) \leq \bP(|A|>\frac{M}2)+\bP(|B|>\frac{M}2),
		\shortintertext{and similarly,}
		\bP(|AB|>M) \leq \bP(|A|>\sqrt{M})+\bP(|B|>\sqrt{M}).
	\end{align*}
	Hence, the embedding $W^{1,\infty}(0,T;L^2(0,L))\cap L^2(0,T;H^2_0(0,L)) \hookrightarrow C^{0,\frac14}(0,T;H^\frac32(0,L))%\hookrightarrow C^{0,\frac14}(0,T;L^\infty(0,L))
	$, gives us for any $2>s\geq \frac32$ that
	\begin{align*}
		\bP&\left(|I_1|>M \right) \leq \bP\left( h^{\frac18}\sup_{1\leq n \leq N}\|\bu^n\|^2_{\bL^2(\sO)} \geq \sqrt{M}\right) + \bP\left( h^{\frac18}\|\tilde\bfeta^*_N\|_{C^{0,\frac14}(0,T;\bH^s(0,L))}\geq \sqrt{M} \right)\\
		%&\leq \bP\left( h\sup_{1\leq n \leq N}\int_{\sO}(J^{n}_*)|\bu^n|^2 d\bx \geq \frac{M}2\right) \\		&+ \bP\left( \sqrt{h}\sup_{0\leq k \leq N-1}\|\bv_*^{k+\frac12}\|_{L^2(0,L)} \geq \sqrt{\frac{M}2}\right) +\bP\left( \sqrt{h}\sum_{n=0}^N(\Delta t)\|\bu^n\|^2_{\bH^1(\sO)}\geq \sqrt{\frac{M}2} \right)\\
		&\leq \frac{{h}^{\frac18}}{\sqrt{M}}\bE\left( \sup_{1\leq n \leq N}\|\bu^n\|^2_{\bL^2(\sO)}  +\|\tilde\bfeta^*_N\|_{C^{0,\frac14}(0,T;\bH^s(0,L))} \right)\\
		&\leq \frac{{h}^{\frac18}}{\sqrt{M}}\bE\left( \sup_{1\leq n \leq N}\|\bu^n\|^2_{\bL^2(\sO)}  +\|\tilde\bfeta^*_N\|_{W^{1,\infty}(0,T;\bL^2(0,L))\cap L^2(0,T;\bH^2_0(0,L)) } \right) \leq C\frac{{h}^{\frac18}}{\sqrt{M}}.
	\end{align*}

	%Next observe that, for $\alpha<\frac12$ we have  that $\|\bv_\lambda-\bv\|_{\bL^2} \leq \lambda^{\alpha}\|\bv\|_{\bH^\alpha}$, and thus,
	\iffalse
	Here we also used \eqref{udiff} to obtain,
	\begin{align*}	\|\bu^{n-j}_{\sigma}-\bu^{n-j,ext}\|^2_{\bL^2(\sO_{\eta^n_*})}&=\|\bu^{n-j}_{\sigma}-\tilde\bu^{n-j}\|_{\bL^2(\sO_{\eta^n_*}\cap \sO_{\eta^{n-j-1}_*})}^2+\|\bu^{n-j}_{\sigma}-\bu^{n-j,ext}\|_{\bL^2(\sO_{\eta^n_*}\setminus \sO_{\eta^{n-j-1}_*})}^2\\
		&\leq [\sigma_2(\sigma_1-1)^2+(\sigma_2-1)]\|\bu^{n-j}\|^2_{\bH^1(\sO)} +\|\bu^{n-j}\|^2_{\bL^4(\sO)}\|\eta^n_*-\eta^{n-j-1}_*\|_{L^\infty(0,L)}\\
		&\leq \left( (1+h^{\frac12}\|\eta^*_N\|_{C^{0,\frac12}})h^{\frac14}+h^{\frac12}\|\eta^*_N\|_{C^{0,\frac12}} \right) \|\bu^{n-j}\|^2_{\bH^1(\sO)}.
	\end{align*}
	\fi
	Next, thanks to \eqref{u2}, we see that
	\begin{align*}
		& I_2=	|(\Delta t)\sum_{n=0}^N \left( \int_{\sO}(J^{n}_*) \bu^{n}(\bu^{n,n}_{\sigma,\lambda}-\bu^{n}-(\bu_{\sigma,\lambda}^{n-j,n}-\bu^{n-j}))d\bx\right)|\\
		& \leq C(\delta)(\Delta t)\sum_{n=0}^N\|\bu^n\|_{\bL^2(\sO)}(\|\bu^{n,n}_{\sigma,\lambda}-\bu^n\|_{\bL^2(\sO)}+\|\bu^{n-j,n}_{\sigma,\lambda}-\bu^{n-j}\|_{\bL^2(\sO)})\\
		&\leq C(\delta, T) h^\frac1{12} \left(1
		+\|\tilde\bfeta^*_N\|_{C^{0,\frac14}(0,T;\bL^\infty(0,L))}\right) \,\sup_{1\leq n\leq N} \|\bu^n\|_{\bL^2(\sO)} \left( (\Delta t)\sum_{n=1}^{N} \|\bu^{k}\|^2_{\bH^1(\sO)}\right)^{\frac12} .
	\end{align*}
	Hence, we find that
	\begin{align*}
		\bP(|I_2|>M)& \leq
		\frac{h^{\frac1{18}}}{M^{\frac23}}\bE\left( \|\tilde\bfeta^*_N\|^2_{C^{0,\frac14}(0,T;\bL^\infty(0,L))}+ \sup_{1\leq n\leq N}\|\bu^n\|^2_{\bL^2(\sO)}+ \left((\Delta t) \sum_{n=0}^N 
		\|\bu^{n}\|^2_{\bH^1(\sO)}\right)\right) \\
		&\leq C\frac{h^{\frac1{18}}}{M^{\frac23}}.
	\end{align*}
Recall that $\bv^{k,n}_{\sigma,\lambda}$ and $\bv^k$ are the traces of $\bu^{k}_{\sigma,\lambda}$ on $\Gamma_{\bfeta^n_*}$ and $\tilde\bu^k$ (and thus $\bu^{k,ext}$) on $\Gamma_{\bfeta^{k-1}_*}$ respectively.  Hence using \eqref{v1} we obtain
	\begin{align*}
		\|(\bv^{n-j,n}_{\sigma,\lambda}-\bv^{n-j})\|_{\bL^2(0,L)} &\leq 	\|\bv^{n-j,n}_{\sigma,\lambda}-\bu^{n-j,ext}|_{\Gamma_{\bfeta^n_*}}\|+\|\bu^{n-j,ext}|_{\Gamma_{\bfeta^n_*}}-\bu^{n-j,ext}|_{\Gamma_{\bfeta^{n-j-1}_*}}\|\\
		& \leq C(\delta) h^{\frac18} \|\bu^k\|_{\bH^1(\sO)} + \|\bfeta^n_*-\bfeta^{n-j}_*\|_{\bL^\infty(0,L)} \|\bu^k\|_{\bH^1(\sO)}\\
		&\leq C(h^{\frac1{12}}+h^{\frac14}\|\tilde\bfeta^*_N\|_{C^{0,\frac14}(0,T;\bH^s(0,L))})\|\bu^k\|_{\bH^1(\sO)}.
	\end{align*} 
Therefore,
	\begin{align*}
		I_3&:=	|(\Delta t)\sum_{n=0}^N \int_0^L \bv^n(\bv^{n,n}_{\sigma,\lambda}-\bv^n-(\bv^{n-j,n}_{\sigma,\lambda}-\bv^{n-j}))| \\
		&\leq 	(\Delta t)\sup_{0\leq n\leq N} \|\bv^n\|_{\bL^2(0,L)}\sum_{n=0}^N  \|\bv^{n,n}_{\sigma,\lambda}-\bv^n\|+\|(\bv^{n-j,n}_{\sigma,\lambda}-\bv^{n-j})\| \\
		& \leq	C\left( h^{\frac1{12}}+h^{\frac14}\|\tilde\bfeta^*_N\|_{C^{0,\frac14}(0,T;\bH^s(0,L))}\right)   \sup_{0\leq n\leq N}\|\bv^n\|_{\bL^2(0,L)}\left( (\Delta t)\sum_{n=0}^N \|\bu^k\|_{\bH^1(\sO)}^2\right) ^{\frac12}.
	\end{align*}
	Hence we obtain,
	\begin{align*}
		\bP(|I_3|>M) &\leq 	\frac{h^{\frac1{18}}}{M^{\frac23}}\bE\left( \|\tilde\bfeta^*_N\|^2_{C^{0,\frac14}(0,T;\bH^s(0,L))}  + \sup_{0\leq n\leq N}\|\bv^n\|^2_{\bL^2(0,L)} + (\Delta t)\sum_{n=0}^N \|\bv^n\|^2_{\bH^\frac12(0,L)}\right) \\
		&\leq C\frac{h^{\frac1{18}}}{M^{\frac23}}.
	\end{align*}
	For the penalty term, thanks to \eqref{kton}, we observe that,
	\begin{align*}
		& I_4:=|	\frac{(\Delta t)}{\ep}\sum_{n=0}^{N}\int_\sO 
		\text{div}^{\bfeta^n_*}\bu^{n+1}\left( \Delta t \sum_{k=n-j+1}^n\text{div}^{\bfeta^n_*}(\bu^{k,n}_{\sigma,\lambda})\right) d\bx |  \\
		&\leq \frac{(\Delta t)}{\ep}(\Delta t)\sum_{n=0}^{N}
		\|	\text{div}^{\eta^n_*}\bu^{n+1}\|_{L^2(\sO)}\left( 	{(\Delta t)} \sum_{k=n-j+1}^n\|\text{div}^{\bfeta^n_*}(\bu^{k,n}_{\sigma,\lambda})\|^2_{L^2(\sO)}\right)^{\frac12} \sqrt{h} \\
		&\leq C \sqrt{hT}  \left(  	\frac{(\Delta t)}{\ep} \sum_{n=0}^N\|\text{div}^{\bfeta^{n}_*}(\bu^{n+1})\|^2_{L^2(\sO)}\right) .
	%\\	&\leq C\sqrt{hT}\left(  {1+\|\tilde\bfeta^*_N\|_{C^{0,\frac14}(0,T;\bL^\infty(0,L))}h^{\frac14}}\right) \left(  	\frac{(\Delta t)}{\ep} \sum_{n=0}^N\|\text{div}^{\bfeta^{n}_*}(\bu^{n+1})\|^2_{L^2(\sO)}\right) .
	\end{align*}
	Hence,
	\begin{align*}
		\bP(I_4>M) &\leq \frac{\sqrt{h}}{M}%\bE[1+\|\tilde\bfeta^*_N\|^2_{C^{0,\frac14}(0,T;\bL^\infty(0,L))}h^{\frac14}]+ \frac{{h}^{\frac14}}{M^{\frac12}}
	\bE\left(  	\frac{(\Delta t)}{\ep} \sum_{n=0}^N\|\text{div}^{\bfeta^{n}_*}(\bu^{n+1})\|^2_{L^2(\sO)}\right)\leq C\frac{h^{\frac12}}{M^{}}.
	\end{align*}
	Notice that, due to Theorem \ref{energythm} (2), the constant $C$ in the estimate above does not depend on $\ep$. 
		Similarly since $\|\phi_\lambda\|_{H^1} \leq C\|\phi\|_{H^1}$, the term with the transformed symmetrized gradients yields a similar result.
	\begin{align*}
	\bE[I_5] =&\bE|	{(\Delta t)}{}\sum_{n=0}^{N}\int_\sO 
		\bD^{\eta^n_*}\bu^{n+1}\left( \Delta t \sum_{k=n-j+1}^n\bD^{\eta^n_*}(\bu^{k,n}_{\sigma,\lambda})\right) d\bx |  \\
		&\leq \bE((\Delta t)\sum_{n=0}^{N}
		\|	\bD^{\eta^n_*}\bu^{n+1}\|_{\bL^2(\sO)}\left( {(\Delta t)} \sum_{k=n-j+1}^n\|\bD^{\eta^n_*}(\bu^{k,n}_{\sigma,\lambda})\|_{\bL^2(\sO)}\right)^{}) \\
		&\leq \bE[(\Delta t)\sum_{n=0}^{N}
		\|	\bD^{\eta^n_*}\bu^{n+1}\|_{\bL^2(\sO)}\left( {(\Delta t)} \sum_{k=n-j+1}^n\|\bD^{\bfeta^{k-1}_*} \bu^{k}\|_{\bL^2(\sO)}\right) ]\\
		&\leq \sqrt{hT}\bE[(\Delta t)\sum_{n=0}^{N}
		\|	\bD^{\eta^n_*}\bu^{n+1}\|^2_{\bL^2(\sO)}] \leq Ch^{\frac12}.
	\end{align*}
	
	 %thanks to the definition of the ALE velocity, we have { $\|\bw_*^n\|_{\bL^p(\sO)}^p \leq  \|\bv^n\|^p_{\bL^p(0,L)}$}. 	Since , is continuous, we also have
%	Next, we have, for some $C>0$ depending only on $\delta$, that	$$\|\bu^k_{\sigma,\lambda}\|_{\bL^4(\sO_\delta)} \leq \|\bu^k_\sigma\| _{\bL^4(\sO_\delta)} \leq C(\delta) \|\bu^{k,ext}\|_{\bL^4(\sO_{\delta})} \leq C(\delta)\|\bu^{k}\|_{\bH^1(\sO)}. $$	
%	Additionally thanks to the embedding	$H^{\frac12}(0,L) \hookrightarrow L^p(0,L)$ for any $p<\infty$, 
Now for $I_6:=|	(\Delta t)\sum_{n=0}^N b^{\bfeta^n_*}\left( \bu^n,\bw_*^n,\left( \Delta t \sum_{k=n-j+1}^n\bu^{k,n}_{\sigma,\lambda}\right) \right) |$, the embedding $H^\frac12(\sO)\hookrightarrow L^4(\sO)$ gives us, for some $C>0$ that depends only on $\delta$, that
\begin{align*}
I_6^1&:=|(\Delta t)\sum_{n=0}^N	(\Delta t)\int_{\sO}J_*^n(\bu^{n+1}-
	\bw_*^{{n}})\cdot\nabla^{{\bfeta}_*^n}\bu^{n+1}\cdot\left( \Delta t \sum_{k=n-j+1}^n\bu^{k,n}_{\sigma,\lambda}\right)|\\
		&\leq\sqrt{h}	(\Delta t)\sum_{n=0}^N  \|\bu^{n+1}-\bw_*^{n}\|_{\bL^4(\sO)} \|\nabla^{\eta^n_*}\bu^{n+1}\|_{\bL^2(\sO)} \left( \Delta t \sum_{k=n-j+1}^n\|\bu^{k,n}_{\sigma,\lambda}\|^2_{\bL^4\sO)}\right)^{\frac12}\\
			&\leq { C(\delta)}\sqrt{h}\left( 	(\Delta t)\sum_{n=0}^N  \|\bu^{n+1}\|^2_{\bH^1(\sO)}\right)^{\frac32} \\
			&+C(\delta)\sqrt{h}\left((\Delta t) \sum_{n=0}^{N}\|\bv^{n+\frac12}\|^2_{\bL^2(0,L)}\right)^{\frac12} \left( (\Delta t) \sum_{n=0}^N\|\bu^{n+1}\|^2_{\bH^1(\sO)}\right),
\end{align*}
and similarly, using the property of mollification $\|\phi_\lambda\|_{H^m} \leq \frac{C}{\lambda^{m}}\|\phi\|_{L^2}$, we obtain
\begin{align*}
I_6^2&:=|	(\Delta t)\sum_{n=0}^N\int_{\sO}J^n_*(\bu^{n+1}-
	\bw_*^{{n}})\cdot\nabla^{{\bfeta}_*^n}\left( \Delta t \sum_{k=n-j+1}^n\bu^{k,n}_{\sigma,\lambda}\right)\cdot\bu^{n+1}|\\
		&\leq\sqrt{h}	(\Delta t)\sum_{n=0}^N  \|\bu^{n+1}-\bw_*^{n}\|_{\bL^2(\sO)} \|\bu^{n+1}\|_{\bL^4(\sO)} \left( \Delta t \sum_{k=n-j+1}^n\|\nabla^{\eta^n_*}\bu^{k,n}_{\sigma,\lambda}\|^2_{\bL^4(\sO)}\right)^{\frac12}\\
			& \leq\sqrt{h}	(\Delta t)\sum_{n=0}^N  \left( \|\bu^{n+1}\|_{\bL^2(\sO)}+\|\bv^{n+\frac12}\|_{\bL^2(0,L)}\right)  \|\bu^{n+1}\|_{\bH^1(\sO)} \left( \Delta t \sum_{k=n-j+1}^n%\|\bu^{k}_{\sigma,\lambda}\|_{\bH^1(\sO_{\bfeta^n_*})}
		\|\bu^{k}_{\sigma,\lambda}\|^2_{\bH^2(\sO_{\bfeta^n_*})}\right)^{\frac12}\\
		&\leq \frac{\sqrt{hT}}{{\lambda}}	\sup_{0\leq n\leq N}  \left( \|\bu^{n+1}\|_{\bL^2(\sO)}+\|\bv^{n+\frac12}\|_{\bL^2(0,L)}\right)(\Delta t)\sum_{n=0}^N  \|\bu^{n+1}\|^2_{\bH^1(\sO)}.
\end{align*}
Since $\lambda=h^{\frac16}$ for any $M>0$, we have
	\begin{align*}
		\bP\left( |	I^1_6 | \geq M^{}\right) &\leq \frac{h^{\frac13}}{M^{\frac23}}\bE\left( (\Delta t)\sum_{n=0}^N  \|\bu^{n+1}\|^2_{\bH^1(\sO)}+\|\bv^{n+\frac12}\|^2_{\bH^\frac12(0,L)}\right)\leq C\frac{h^{\frac13}}{\ep M^{\frac23}}\\
	\notag		\bP\left( |	I^2_6 | \geq M^{}\right) &\leq \frac{h^{\frac16}}{\sqrt{M}}\bE  \sup_{0\leq n\leq N}  \left( \|\bu^{n+1}\|^2_{\bL^2(\sO)}+\|\bv^{n+\frac12}\|^2_{\bL^2(0,L)}\right)\\
	\notag&+\frac{h^{\frac16}}{\sqrt{M}}\bE\sum_{n=0}^N (\Delta t) \|\bu^{n+1}\|^2_{\bH^1(\sO)}  
		 \leq C\frac{h^{\frac16}}{\sqrt{M}}. 
	\end{align*}
Next, calculations similar to \eqref{boundJt} give us,
\begin{align}\label{boundJt2}
	\|\frac{J^{n+1}_*-J^{n}_*}{\Delta t}\|_{\bL^2}= C(\delta)\|\bw^{n+1}_*\|_{\bH^1(\sO)}\leq C(\delta)\|\bv_*^{n+1}\|_{\bH^{\frac12}(0,L)}.
\end{align}
Hence,
	\begin{align*}
		I_7&:=	|\sum_{n=0}^N\int_\sO \left( J_*^{n+1}-J_*^n\right)  \bu^{n+1}\cdot \left( \Delta t \sum_{k=n-j+1}^n\bu^{k,n}_{\sigma,\lambda}\right) d\bx | \\
		&\leq (\Delta t)\sum_{n=0}^N\|\bv^{n+\frac12}\|_{\bH^\frac12(0,L)}\|\bu^{n+1}\|_{\bL^6(\sO)}\left( (\Delta t)\sum_{k=n-j+1}^n    \|  \bu^{k,n}_{\sigma,\lambda} \|^2_{\bL^3(\sO)}\right) ^{\frac12}\sqrt{h}\\
		&\leq C\sqrt{h} \left((\Delta t) \sum_{n=0}^N\|\bv^{n+\frac12}\|^2_{\bH^\frac12(0,L)} \right) ^{\frac12} \left( (\Delta t)\sum_{n=0}^N    \|  \bu^{n} \|^2_{\bH^1(\sO)}\right).
	\end{align*}
	This implies that,
	\begin{align}\label{ep2}
		\bP\left( |	I_7 | \geq M^{}\right) \leq \frac{h^{\frac14}}{M^{\frac12}}\bE\left((\Delta t)\sum_{n=0}^N\|\bv^{n+\frac12}\|^2_{\bH^\frac12(0,L)}+ (\Delta t)\sum_{n=0}^N  \|\bu^{n+1}\|^2_{\bH^1(\sO)}\right)\leq  \frac{C}{\ep}\frac{h^{\frac14}}{M^{\frac12}}.
	\end{align}
	Next, using \eqref{vh2} and the fact that $\|\phi_\lambda\|_{H^m} \leq \frac{C}{\lambda^{m}}\|\phi\|_{L^2}$, we obtain that,
	\begin{align*}
		I_8&:=	|(\Delta t)\sum_{n=0}^N	\left(   \sL_e(\bfeta^{n+1} ), \left( \Delta t \sum_{k=n-j+1}^n\bv^{k}_{\sigma,\lambda}\right)\right) |\\
		&\leq (\Delta t)^2 \sum_{n=0}^N\left( \|\partial_{zz}\bfeta^n \|_{\bL^2(0,L)} \sum_{k=n-j-1}^n\|\partial_{zz}\bv^{k,n}_{\sigma,\lambda}\|_{\bL^2(0,L)} \right)  \\
		& \leq (\Delta t)^2\sum_{n=0}^N\left( \|\partial_{zz}\bfeta^n \|_{\bL^2(0,L)} \sum_{k=n-j-1}^nC(\delta)\|\bfeta^n_*\|_{\bH^2(0,L)}\|\bu^{k}_{\sigma,\lambda}\|_{\bH^3(\sO_{\delta})} \right)\\
		& \leq \frac{C(\delta)}{\lambda^2}(\Delta t)^2 \sum_{n=0}^N\left(  \|\bfeta^n\|_{\bH^2(0,L)}\|\bfeta^n_*\|_{\bH^2(0,L)}\sum_{k=n-j-1}^n\|\bu^k_{\sigma}\|_{\bH^1(\sO_{\delta})} \right)\\
		&\leq {CT}{h^{\frac16}}\max_{0\leq n\leq N} \|\bfeta^n \|^2_{\bH^2(0,L)} \left( \sum_{n=1}^N\|\bu^n\|^2_{\bH^1(\sO)}\right) ^{\frac12}.
	\end{align*}
Here we also used that $\lambda = h^{\frac16}$.	Hence for $I_8$ we obtain,
	\begin{align*}
		\bP(I_8 \geq M)  &\leq \frac{Ch^{\frac1{12}}}{M}  \bE\left( \max_{0\leq n\leq N}\|\bfeta^n \|^2_{\bH^2(0,L)}+ \max_{0\leq n\leq N}\|\bu^n\|^2_{\bL^2(\sO)}\right) 
		\leq C\frac{h^{\frac1{12}}}{M} .
	\end{align*}
Similarly, for some $C>0$, independent of $\ep$ and $N$, we obtain
\begin{align*}
&\bE|\ep	(\Delta t) \sum_{n=0}^N \int_0^L\partial_z\bv^{n+\frac12} \partial_z \left( \Delta t \sum_{k=n-j+1}^n\bv^{k,n}_{\sigma,\lambda}\right) dz|\\
& \leq\bE \left( \ep	(\Delta t) \sum_{n=0}^N \|\partial_z\bv^{n+\frac12}\|_{\bL^2(0,L)} \left( \Delta t \sum_{k=n-j+1}^n\|\partial_z \bv^{k,n}_{\sigma,\lambda}\|_{\bL^2(0,L)}\right)\right)  \\
& \leq\bE\left(  \ep	(\Delta t) \sum_{n=0}^N \|\partial_z\bv^{n+\frac12}\|_{\bL^2(0,L)} \left( \Delta t \sum_{k=n-j+1}^n\|\nabla \bu^{k}_{\sigma,\lambda}\|_{\bH^\frac12(\sO_{\bfeta^n_*})}\right)\right)  \\
& \leq\bE\left[ \sqrt{T}\frac{h}{\lambda^\frac32}\left( \ep	(\Delta t) \sum_{n=0}^N \|\partial_z\bv^{n+\frac12}\|^2_{\bL^2(0,L)}\right)^{\frac12}  \left(\sup_{0\leq k\leq N}\| \bu^{k}_{\sigma}\|_{\bL^2(\sO_{\delta})}\right) \right] \\
& \leq h^{\frac12} C(\delta, T) \bE\left[ \left( \ep	(\Delta t) \sum_{n=0}^N \|\partial_z\bv^{n+\frac12}\|^2_{\bL^2(0,L)}\right)+  \left(\sup_{0\leq k\leq N}\| \bu^{k}_{}\|^2_{\bL^2(\sO_{})}\right) \right] \leq C h^{\frac12}
\end{align*}
	Finally, we treat the stochastic term using the same argument argument as above. To bound the expectation we use Young's inequality and the argument presented in \eqref{tower} to obtain,
	\begin{align*}
		&\bE\left( \sum_{n=0}^N	|(G(\bU^{n},\bfeta_*^n)\Delta_n W, \bQ_n)|\right) \\
		&	\leq 		\bE\left( \sum_{n=0}^N	\|G(\bU^{n},\bfeta_*^n)\|_{L_2(U_0;\bL^2)}\|\Delta_n W\|_{U_0}  \left( (\Delta t) \sum_{k=n-j+1}^n\|\bu^{k,n}_{\sigma,\lambda}\|_{\bL^2(\sO)}^2 +\|\bv_{\sigma,\lambda}^k\|_{\bL^2(0,L)}^2\right)^{\frac12}h^{\frac12}\right) 	\\
		& 	\leq 	h^{\frac12}	\bE\left( \sum_{n=0}^N	\|G(\bU^{n},\bfeta_*^n)\|^2_{L_2(U_0;\bL^2)}\|\Delta_n W\|^2_{U_0}  +\left( (\Delta t) \sum_{k=n-j+1}^n\|\bu^{k,n}_{\sigma,\lambda}\|_{\bL^2(\sO)}^2 +\|\bv_{\sigma,\lambda}^k\|_{\bL^2(0,L)}^2\right)\right) 	\\
		&\leq h^{\frac12} C(\delta,\Tr \bQ)\bE\sum_{n=0}^N(\Delta t)[\|\bu^n\|^2_{\bL^2(\sO)}+\|\bv^n\|^2_{\bL^2(0,L)}] \leq Ch^{\frac12}.
	\end{align*}
	Now to show that the laws of the random variables mentioned in the statement of the theorem are tight, we will consider the following sets for $0\leq \alpha<1$ and $0\leq \beta<\frac12$ and any $M>0$,
	\begin{align*}
		{\mathcal{B}}_M:=\{(\bu,\bv)&\in L^2(0,T;\bH^\alpha(\sO))\times L^2(0,T;\bH^{\beta}(0,L)):\|{\bu}\|^2_{L^2(0,T;\bH^1(\sO))}+\|{\bv}\|^2_{L^2(0,T;\bH^{\frac12}(0,L))}\\
		&+\sup_{0<h<1}{h^{-\frac1{18}}}\int_h^{T}\left( \|T_h\bu-\bu\|^2_{\bL^2(\sO)}+\|T_h\bv-\bv\|^2_{\bL^2(0,L)}\right)  \le M\}.
	\end{align*}
	Observe that, thanks to Lemma \ref{compactLp}, $\sB_M$ is compact in $L^2(0,T;\bH^\alpha(\sO))\times L^2(0,T;\bH^{{\beta}}(0,L))$, $0\leq \alpha<1$ and $0\leq\beta<\frac12$ for each $M>0$.
	An application of Chebyshev's inequality then gives us the desired result:
	\begin{align*}
		\bP((\bu^+_N,\bv^+_N) \notin \sB_M)&\leq \bP\left( \|{\bu^+_N}\|^2_{L^2(0,T;\bH^1(\sO))}+\|{\bv^+_N}\|^2_{L^2(0,T;\bH^{\frac12}(0,L))}>\frac{M}2\right) \\
		&+\bP\left( \sup_{0 <h<1 }{h^{-\frac1{18}}}\int_h^{T}\left( \|T_h\bu^+_N-\bu^+_N\|^2_{\bL^2(\sO)}+\|T_h\bv^+_N-\bv^+_N\|^2_{\bL^2(0,L)}\right)  > \frac{M}2\right) \\
		&\leq \frac{C}{\sqrt{M}},
	\end{align*}
	where $C>0$ depends only on $\delta$, Tr$Q$, $\ep$ and the given data and is independent of $N$. Note here that the dependence of $C$ on $\ep$ appears only in \eqref{ep2}. We shall see in the next section that, by integrating by parts, we can get rid of this dependence and obtain the same tightness result which will allow us to pass $\ep \to 0$.
\end{proof}
Next we will state the rest of the tightness results. These will be used in Section \ref{almostsure} to obtain almost sure convergence via an application of Prohorov's theorem and the Skorohod representation theorem. 
\begin{lem}\label{tightl2}
	For fixed a $\delta$ and $\ep>0$, the following statements hold:
	\begin{enumerate}
		\item The laws of $\{\tilde\bfeta_{N}\}_{N\in \mathbb{N}}$ and that of $\{\tilde\bfeta^*_{N}\}_{N\in \mathbb{N}}$ are tight in $C([0,T],\bH^{s}(0,L))$ for any $s<2$.
	%	\item The laws of $\{\|\bu^+_N\|_{L^2(0,T;\bH^1(\sO))}\}_{N\in \mathbb{N}}$ are tight in $\R$.
		\item The laws of $\{\|\bv^*_N\|_{L^2(0,T;\bH^1_0(0,L))}\}_{N\in \mathbb{N}}$ are tight in $\R$.
		\item The laws of $\{\|\bu^+_N\|_{L^2(0,T;V)}\}_{N\in \mathbb{N}}$ are tight in $\R$.
	\end{enumerate}
\end{lem}
	\begin{proof}
		To prove the first statement we observe, thanks to Lemma \ref{bounds}, that $\tilde{\bfeta}_{N}$ and $\tilde{\bfeta}_{N}^*$ are bounded independently of $N$ in $L^2(\Omega;L^\infty(0,T;\bH^2(0,L))\cap W^{1,\infty}(0,T;\bL^2(0,L)))$. A direct application of the Aubin-Lions theorem gives us that for $0<s<2$
		$$L^\infty(0,T;\bH^2(0,L))\cap W^{1,\infty}(0,T;\bL^2(0,L)) \subset\subset C([0,T];\bH^{s}(0,L)). 
		$$
		Hence consider
		\begin{align*}\mathcal{K}_M:=&\{{\bfeta}\in L^\infty(0,T;\bH^2_0(0,L))\cap W^{1,\infty}(0,T;\bL^2(0,L)):\\
			&\|{\bfeta}\|^2_{L^\infty(0,T;\bH^2_0(0,L))} + \|{\bfeta}\|^2_{ W^{1,\infty}(0,T;\bL^2(0,L))}\leq M\}.\end{align*}
		Using the Chebyshev inequality once again we obtain for some $C>0$ independent of $N$ that the following holds:
		\begin{equation}
			\begin{aligned}
				\bP\left[\tilde{\bfeta}_{N}\not\in {\mathcal{K}}_M\right]&\le \bP\left[
				{\|\tilde{\bfeta}_{N}\|}^2_{L^\infty(0,T;\bH^2_0(0,L))}\ge \frac{M}{2}\right]+
				\bP\left[
				{\|\tilde{\bfeta}_{N}\|}^2_{W^{1,\infty}(0,T;\bL^2(0,L))}\ge \frac{M}{2}\right]	\\
				&\le  \frac{4}{M^2}\bE\left[
				{\|\tilde{\bfeta}_{N}\|}^2_{L^\infty(0,T;\bH^2_0(0,L))}+\|\tilde{\bfeta}_{N}\|^2_
				{W^{1,\infty}(0,T;\bL^2(0,L))}\right]
				\leq \frac{C}{M^2}.
			\end{aligned}
		\end{equation}
	The proof of the statements (2) follow by a similar application of the Chebyshev inequality using the bounds obtained in Lemma \ref{bounds}. For any $M>0$ and fixed $\ep>0$,
%	Finally, to prove statement (2), and similarly statement (5), we use the Chebyshev inequality again to write for any $M>0$,
	\begin{align*}
		\bP[\| \bv^*_N\|_{L^2(0,T;\bH^1_0(0,L))}>M] \leq \frac1{M^2}\bE[\| \bv^*_N\|^2_{L^2(0,T;\bH^1_0(0,L))}] \leq \frac{C}{M^2}.
	\end{align*}
	{This completes the proof of the tightness results stated in Lemma \ref{tightl2}.}
	\end{proof}
	To obtain almost sure convergence of the rest of the random variables we will use the following lemma.
	\begin{lem}\label{difference}
	The following convergence results hold: 
		\begin{enumerate}
			\item $\lim_{N\rightarrow\infty} \bE\int_0^T\|\bu_{N}-\bu^+_{N}\|^2_{\bL^2(\sO)}dt=0$,
			\quad $\lim_{N\rightarrow\infty} \bE\int_0^T\|\bu_{N}-\tilde\bu_{N}\|^2_{\bL^2(\sO)}dt=0$.
				\item  $\lim_{N\rightarrow\infty} \bE\int_0^T\|\bv_{N}-\tilde \bv_{N}\|^2_{\bL^2(0,L)}dt=0$,
			\quad $\lim_{N\rightarrow\infty} \bE\int_0^T\|\bv_{N}-\bv^{\#}_{N}\|^2_{\bL^2(0,L)}dt=0$.
			\item $\lim_{N\rightarrow\infty} \bE\int_0^T\|{\bfeta}_{N}-\tilde{\bfeta}_{N}\|^2_{\bH_0^2(0,L)}dt=0$,
			\quad $\lim_{N\rightarrow\infty} \bE\int_0^T\|{\bfeta}^+_{N}-\tilde{\bfeta}_{N}\|^2_{\bH_0^2(0,L)}dt=0$.
			\item $\lim_{N\rightarrow\infty} \bE\int_0^T\|{\bfeta}^*_{N}-\tilde{\bfeta}^*_{N}\|^2_{\bH_0^2(0,L)}dt=0$.		
		\end{enumerate}
	\end{lem}
	\begin{proof} Statement (1)$_1$ follows immediately from Theorem \ref{energythm} (3). We prove (1)$_2$ below. The rest follows similarly {\s from the uniform estimates stated in} Theorem \ref{energythm}.
		\begin{align*}
		&	\bE\int_0^T\|\bu_{N}-\tilde\bu_{N}\|^2_{\bL^2(\sO)}dt=
			\bE\sum_{n=0}^{N-1}\int_{t^n}^{t^{n+1}}\frac1{\Delta t}\|(t-t^n)\bu^{n+1}+(t^{n+1}-t-\Delta t)\bu^{n}\|^2_{\bL^2(\sO)}dt\\
			&=\bE\sum_{n=0}^{N-1}\|\bu^{n+1}-\bu^n\|_{\bL^2(\sO)}^2\int_{t^n}^{t^{n+1}} \left( \frac{t-t^n}{\Delta t}\right) ^2dt
			\leq \frac{CT}{\delta_1 N} \rightarrow 0 \quad N\rightarrow \infty.
		\end{align*}
	\end{proof}
To pass $N\to\infty$  in the weak formulation, we consider the following random variable:
	\begin{align}\label{UN}
		\tilde\bU_N(t):=(	(\tilde J_N^*(t))\tilde\bu_N(t),\tilde \bv_N(t))-E_N(t),
	\end{align}
	where $E_N$ is an error term that appears due to discretizing the stochastic integral (see \eqref{Jform})
	$$E_N(t)=\sum_{m=0}^{N-1}\left( \frac{t-t^m}{\Delta t}G(\bu^m,\bv^m,\bfeta^m_*)\Delta_mW-\int_{t^m}^tG(\bu^m,\bv^m,\bfeta^m_*)dW\right) \chi_{[t^m,t^{m+1})}.$$
	Next we define $	\sU_1:=\sU\cap (\bH^2(\sO)\times \bH^3_0(0,L)).$
For any $\sV_1\subset\subset \sU_1$, we denote by $\mu^{u,v}_N$ the probability measure of $\tilde\bU_N$: $$\mu^{u,v}_{N}:=\bP\left( \tilde\bU_{N}\in \cdot \right) \in Pr(C([0,T];\sV'_1)) ,$$
	where $Pr(S)$, here and onwards, denotes the set of probability measures on a metric space $S$. Then we have the following tightness result proven in \cite{TC23}.
	\begin{lem}\label{uvtightC}
		For fixed $\ep>0$ and $\delta$, the laws $\{\mu_N^{u,v}\}_N$ of {the random variables } $\{\tilde\bU_N\}_{N}$ 
		are tight in 
		$C([0,T];\sV_1')$.
			\end{lem}
			Moreover, in the following lemma, we show that the error term vanishes as $N\to \infty$.
			\begin{lem}\label{u*diff}
				{The numerical error $E_N$ of the stochastic term has the following property:}
				$$ {\bE\int_0^T\|E_N(t)\|^2_{\bL^2(\sO)\times \bL^2(0,L)}dt  \rightarrow } 0 \text{ as } {N\rightarrow\infty}.$$
			\end{lem}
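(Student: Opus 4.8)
The plan is to exploit the fact that, on each subinterval $[t^m,t^{m+1})$, the operator $G_m:=G(\bar\bu^m,\bar\bv^m,\bar{\bfeta}^m_*)$ is $\bar{\mathcal F}^N_{t^m}$-measurable, hence acts as a time-independent (predictable) integrand, so that the discrepancy between the ``frozen'' increment and a genuine It\^o integral can be written as a sum of two orthogonal stochastic integrals. Concretely, recalling that in \eqref{second} the term $G_m\Delta_m\bar W$ is understood as $\int_{t^m}^{t^{m+1}}G_m\,d\bar W$, I would first fix $t\in(t^m,t^{m+1})$, set $a:=(t-t^m)/\Delta t\in[0,1]$, split $\int_{t^m}^{t^{m+1}}=\int_{t^m}^{t}+\int_{t}^{t^{m+1}}$, and obtain
\[
E_N(t)=a\int_{t^m}^{t^{m+1}}G_m\,d\bar W-\int_{t^m}^{t}G_m\,d\bar W=(a-1)\int_{t^m}^{t}G_m\,d\bar W+a\int_{t}^{t^{m+1}}G_m\,d\bar W .
\]

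Next I would take the $\bL^2(\sO)\times\bL^2(0,L)$-norm squared and expectation. The first integral $\int_{t^m}^{t}G_m\,d\bar W$ is $\bar{\mathcal F}^N_t$-measurable, while $\bar\bE\bigl[\int_{t}^{t^{m+1}}G_m\,d\bar W\mid\bar{\mathcal F}^N_t\bigr]=0$ since $G_m$ is $\bar{\mathcal F}^N_t$-measurable and $\bar W_N$ is a Wiener process for a filtration to which $G_m$ is adapted; hence the cross term vanishes. Applying the It\^o isometry to each piece (using again that $G_m$ is constant in time on the subinterval) gives
\[
\bar\bE\|E_N(t)\|^2_{\bL^2}=(1-a)^2\,a\,\Delta t\,\bar\bE\|G_m\|^2_{L_2(U_0;\bL^2)}+a^2\,(1-a)\,\Delta t\,\bar\bE\|G_m\|^2_{L_2(U_0;\bL^2)}=a(1-a)\,\Delta t\,\bar\bE\|G_m\|^2_{L_2(U_0;\bL^2)} ,
\]
so that $\bar\bE\|E_N(t)\|^2_{\bL^2}\le\tfrac{\Delta t}{4}\bar\bE\|G_m\|^2_{L_2(U_0;\bL^2)}$ for all $t\in[t^m,t^{m+1})$. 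Integrating in $t$ over $[t^m,t^{m+1}]$ and summing over $m$ then yields
\[
\bar\bE\int_0^T\|E_N(t)\|^2_{\bL^2}\,dt\le\frac{\Delta t}{4}\;\Delta t\sum_{m=0}^{N-1}\bar\bE\|G_m\|^2_{L_2(U_0;\bL^2)} .
\]

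It then remains to bound $\Delta t\sum_{m}\bar\bE\|G_m\|^2$ uniformly in $N$. Using the growth bound \eqref{growthG} together with the deterministic estimate $\|\bar{\bfeta}^m_*\|_{\bL^\infty(0,L)}\le C(\delta)$, which follows from \eqref{AS} and the embedding $\bH^s(0,L)\hookrightarrow\bL^\infty(0,L)$ for $s>\tfrac32$, this sum is controlled by $C(\delta)\,\bar\bE\int_0^T\bigl(\|\bar\bu_N\|^2_{\bL^2(\sO)}+\|\bar\bv_N\|^2_{\bL^2(0,L)}\bigr)\,dt$, which is bounded independently of $N$ by the uniform energy estimates of Theorem~\ref{energythm} (equivalently Lemma~\ref{bounds}), valid for the transferred variables $\bar{\mathcal U}_N$ thanks to \eqref{newrv}. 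Plugging this in gives $\bar\bE\int_0^T\|E_N(t)\|^2_{\bL^2}\,dt\le\tfrac{T}{4N}C(\delta,T)\to0$ as $N\to\infty$.

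The only genuinely delicate point — and it is a mild one — is the orthogonality/isometry step: one must verify that on the new probability space produced by the Skorohod representation, $\bar W_N$ is still a Wiener process relative to a filtration for which $G_m$ (equivalently $\bar\bu^m,\bar\bv^m,\bar{\bfeta}^m_*$) is adapted, so that both the It\^o isometry and the vanishing of the cross term are legitimate. This is ensured by the equality in law \eqref{newrv} combined with the $\mathcal F_{t^m}$-measurability of $\bu^m,\bv^m,{\bfeta}^m_*$ on the original stochastic basis; once this is in place, the remaining computations are routine.
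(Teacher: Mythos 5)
Your proof is correct, and it follows a genuinely (if modestly) different route from the paper's. The paper splits $E_N$ crudely into its two defining pieces, $E_N^1$ (the frozen increment term $\frac{t-t^m}{\Delta t}G_m\Delta_m\bar W$) and $E_N^2$ (the running It\^o integral $\int_{t^m}^t G_m\,d\bar W$), and bounds the second moment of each separately — the first via a Cauchy--Schwarz bound in terms of $\|G_m\|_{L_2(U_0;\bL^2)}\|\Delta_m\bar W\|_{U_0}$ together with the tower property as in \eqref{tower}, the second via the It\^o isometry — obtaining $O(\Delta t)$ for both. You instead use the identification $G_m\Delta_m\bar W=\int_{t^m}^{t^{m+1}}G_m\,d\bar W$ to recombine the two pieces into stochastic integrals over the disjoint intervals $[t^m,t]$ and $[t,t^{m+1}]$, whose cross term vanishes by conditioning on $\bar{\mathcal F}_t$, and then the It\^o isometry gives the exact identity $\bar\bE\|E_N(t)\|^2_{\bL^2}=a(1-a)\,\Delta t\,\bar\bE\|G_m\|^2_{L_2(U_0;\bL^2)}$ with $a=(t-t^m)/\Delta t$. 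What your route buys: a sharp constant, and — more substantively — it avoids estimating $\|G_m\Delta_m\bar W\|_{\bL^2}$ through $\|\Delta_m\bar W\|_{U_0}$, handling the increment purely through the isometry; what the paper's route buys is brevity, since no orthogonality observation is needed and each piece is dispatched in two lines. The tail estimate is the same in both arguments: $\Delta t\sum_m\bar\bE\|G_m\|^2$ is bounded uniformly in $N$ by \eqref{growthG}, the deterministic bound on $\|\bar{\bfeta}^m_*\|_{\bL^\infty(0,L)}$ coming from \eqref{AS}, and Theorem \ref{energythm} transferred to the new probability space via \eqref{newrv}. The adaptedness point you flag (that $\bar W_N$ remains a Wiener process with respect to a filtration, e.g.\ $\bar{\mathcal F}^N_t$ of \eqref{Ft}, to which $(\bar\bu^m,\bar\bv^m,\bar{\bfeta}^m_*)$ is adapted) is exactly what the paper's own use of the It\^o isometry for $E_N^2$ also requires, and the standard equality-in-law argument you invoke settles it; so no gap remains.
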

			\begin{proof}
				First, for any $N$, we have 
				$$E_N(t)=\sum_{m=0}^{N-1}\left( \frac{t-t^m}{\Delta t}G(\bu^m,\bv^m,\bfeta^m_*)\Delta_mW-\int_{t^m}^tG(\bu^m,\bv^m,\bfeta^m_*)dW\right) \chi_{[t^m,t^{m+1})}=:E_N^1+E_N^2$$
				{We estimate $E^1_N$ and $E^2_N$. Recall our notation $\bL^2=\bL^2(\sO)\times \bL^2(0,L)$.	Observe that $E^1_N$ satisfies (see also \eqref{tower})},
				\begin{align*}
					&\bE\int_0^T\|E_N^1(t)\|_{\bL^2}^2dt	=\bE\sum_{n=0}^{N-1} \|G(\bu^n,\bv^n,\bfeta^n_*)\Delta_nW\|_{\bL^2}^2 \int_{t^n}^{t^{n+1}}|\frac{t-t^m}{\Delta t}|^2dt\\
					&=\bE\sum_{n=0}^{N-1} \|G(\bu^n,\bv^n,\bfeta^n_*)\Delta_nW\|_{\bL^2}^2 \frac{\Delta t}{3 }\leq \bE\sum_{n=0}^{N-1} \|G(\bu^n,\bv^n,\bfeta^n_*)\|_{L_2(U_0;\bL^2)}^2\|\Delta_nW\|_{U_0}^2 {\Delta t}\\
					&	= (\Delta t)^2\bE\sum_{n=0}^{N-1} \|G(\bu^n,\bv^n,\bfeta^n_*)\|_{L_2(U_0;\bL^2)}^2 \leq C \Delta t,
				\end{align*}
				where $C>0$ does not depend on $N$ or $\ep$, as a consequence of Theorem \ref{energythm}.
				
				{To estimate $E^2_N$ we use the It\^{o} isometry to obtain}
				\begin{align*}
					&\bE\int_0^T\|E_N^2(t)\|_{\bL^2}^2dt= \bE\sum_{n=0}^{N-1}\int_{t^n}^{t^{n+1}}\|\int_{t^n}^tG(\bu^n,\bv^n,\bfeta_*^n)dW\|_{\bL^2}^2dt\\
					&= \bE\sum_{n=0}^{N-1}\int_{t^n}^{t^{n+1}}\int_{t^n}^t\|G(\bu^n,\bv^n,\bfeta_*^n)\|_{L_2(U_0;\bL^2)}^2dsdt
					\\&= \frac12\bE\sum_{n=0}^{N-1}\|G(\bu^n,\bv^n,\bfeta_*^n)\|_{L_2(U_0;\bL^2)}^2(\Delta t)^2 \leq C\Delta t.
				\end{align*}	
				{Thus}, as $N \rightarrow \infty$ we {obtain}
				$\bE\int_0^T\|E_N(t)\|^2_{\bL^2(\sO)\times L^2(0,L)}dt \rightarrow 0.$	
			\end{proof} 
	\subsection{Almost sure convergence}\label{almostsure}
	Let 
	$\mu_{N}$ be the joint law of the random variable\\
	$\mathcal{U}_{N}:=(\bu^+_{N},\bv^{\#}_{N},\bv^+_{N},\tilde\bfeta^*_{N},\tilde\bfeta_{N},\|\bu^+_N\|_{L^2(0,T;V)}, \|\bv^*_N\|_{L^2(0,T;\bH^1_0(0,L))},\tilde\bU,W)$ taking values in the phase space
	\begin{align*}
		\Upsilon&:=[L^2(0,T;\bL^{2}(\sO))]\times[L^2(0,T;\bL^2(0,L))]^2\times [C([0,T],\bH^s(0,L))]^2\times\R^2\\
		& \times L^2(0,T;\bL^2(\sO)\times L^2(0,L))\cap C([0,T];\sV_1')\times C([0,T];U),
	\end{align*}
	for some fixed $\frac32<s<2$.

	Since $C([0,T];U)$ is separable and metrizable by a complete metric,  the sequence of Borel probability measures, $\mu^W_{N}(\cdot):=\bP(W\in\cdot)$, that are constantly equal to one element, is tight on $C([0,T];U)$.
	
	Next, recalling Lemmas \ref{tightuv} \ref{tightl2}, \ref{u*diff} and using Tychonoff's theorem it follows that, for a fixed $\ep>0$, the sequence of the joint probability measures $\mu_{N}$ of the approximating sequence $\mathcal{U}_{N}$ is tight on the Polish space $\Upsilon$. Hence,  by applying the Prohorov theorem and the Skorohod representation theorem we obtain the following result. To be precise we use the version given in Theorem 1.10.4 in \cite{VW96}.
	\begin{theorem}\label{skorohod} 
		There exists a probability space $(\bar\Omega,\bar\sF,\bar\bP)$ and random variables \\$\bar{\mathcal{U}}_{N}:=(
		\bar{\bu}^+_{N},\bar \bv^{\#}_{N},\bar \bv^+_{N},\bar{\tilde\bfeta}^*_{N},\bar{\tilde\bfeta}_{N},m_N,k_N,\bar{\tilde\bU}_N,\bar{W}_N)$ and
		$\bar{\mathcal{U}}:=(
		\bar{\bu},{\bar \bv},\bar \bv^{},\bar{\bfeta}^*,\bar{\bfeta},m,k, \bar{\tilde\bU},\bar{W})$
		defined on this new probability space, such that
	\begin{align}\label{convu}
		\bar{\mathcal{U}}_{N} \rightarrow \bar{\mathcal{U}},\qquad\quad\bar\bP-a.s. \quad\text{in the topology of } \Upsilon.
	\end{align}
		Additionally, there exist measurable maps $\phi_N:\bar\Omega\rightarrow\Omega$ such that \begin{align}\label{newrv}
			\bar{\mathcal{U}}_N(\bar\omega)={\mathcal{U}}_N(\phi_N(\bar\omega)) \quad \text{ for }\bar\omega\in\bar\Omega,
		\end{align} and $\bar \bP\circ\phi_N^{-1}=\bP$ 
		
	\end{theorem}

	Now we will define,
	\begin{align*}
		\bar\bu_N=\bu_N\circ\phi_N,\quad 	\bar{\tilde\bu}_N=\tilde\bu_N\circ\phi_N,\\
		\bar{\bfeta}_N={\bfeta}_N\circ\phi_N,\quad	{\bar{\bfeta}}^+_N={\bfeta}^+_N\circ\phi_N, \quad	\bar{\bfeta}^*_N={\bfeta}^*_N\circ\phi_N,\\
		\bar \bv_N=\bv_N\circ\phi_N.
	\end{align*}
	Then, from part (1), Lemmas \ref{difference} and an application of the Borel-Cantelli lemma, we obtain
	\begin{align}
	&	\bar\bu_N \to \bar\bu,\quad 	\bar{\tilde\bu}_N \to \bar\bu, \quad \bar\bP-a.s. \quad\text{in } L^2(0,T;\bL^2(\sO)),\\	&\bar{\bfeta}_N\to\bar{\bfeta},\quad{\bar{\bfeta}}^+_N\to\bar{\bfeta}, \quad	\bar{\bfeta}^*_N \to\bar{\bfeta}^*, \quad \bar\bP-a.s. \quad\text{in } L^2(0,T;\bH^s(0,L)),\,\, \frac32<s<2.\\
	&	\bar \bv_N \to \bar \bv, \quad \bar\bP-a.s. \quad\text{ in } L^2(0,T;\bL^2(0,L)).\label{convv}
	\end{align}
		Thanks to these explicit maps we can identify the real-valued random variables ${k}_N= \|\bar \bv^*_N\|_{L^2(0,T;\bH^1_0(0,L))}$. Hence almost sure convergence of ${k}_N$ implies that $\|\bar \bv^*_N\|_{L^2(0,T;\bH^1_0(0,L))}$ is bounded a.s. and thus, up to a subsequence, 
		\begin{align}\label{vweak}
		\bar\bv^*_N \rightharpoonup \bar\bv^* \quad\text{ weakly in } {L^2(0,T;\bH^1_0(0,L))} \quad \bar\bP-a.s.
	\end{align}
	Similarly we have that $\bar\bu^+_N\rightharpoonup\bar\bu$ in $L^2(0,T;V)$ a.s.
\iffalse 
	Thanks to these explicit maps we can identify the real-valued random variables ${m}_N= \|\bar \bu^+_N\|_{L^2(0,T;\bH^1(\sO))}$. Hence almost sure convergence of ${m}_N$ implies that $\|\bar \bu^+_N\|_{L^2(0,T;\bH^1(\sO))}$ is bounded a.s. and thus, up to a subsequence, 	\begin{align}\label{uweak}	\bar \bu^+_N \rightharpoonup \bar \bu \quad\text{ weakly in } {L^2(0,T;\bH^1(\sO))} \quad \bar\bP-a.s.	\end{align}		Using the same argument we also obtain that
	\begin{align}\label{vweak}
		\bar\bv^*_N \rightharpoonup \bar\bv^* \quad\text{ weakly* in } {L^2(0,T;\bH^1(\Gamma))} \quad \bar\bP-a.s.
	\end{align}
	Observe also that, since $\sO$ is a Lipschitz domain, the space ${\bH^\gamma(\sO)}$ is continuously embedded in ${\bL^q(\sO)}$  for any $q\in[1,\frac2{1-\gamma}]$ and  $\gamma<1$ and thus Theorem \ref{skorohod} implies that	\begin{align}\label{ul4}		\bar\bu_N^+ \rightarrow \bar\bu \quad \text{in } L^2(0,T;\bL^4(\sO)), \quad \bar\bP-a.s.	\end{align}
\fi
%	Moreover, thanks to \eqref{ale}, we have that\begin{align*}	\|\bw_N\|_{L^2(\Omega;L^\infty(0,T;\bH^{\frac12}(\sO)))} \leq C \|\bv_N\|_{L^2(\Omega;L^\infty(0,T;\bL^{2}(0,L)))} \leq C.\end{align*}

	Furthermore, we have,
	\begin{align}\partial_t\bar{\bfeta}=\bar \bv \text{ and } \partial_t\bar{\bfeta}^*=\bar \bv^*,\quad \text{ in the sense of distributions, almost surely.}\end{align}
	This is true since we have
	$$%({\bfeta}(t),\phi(t))-({\bfeta}_0,\phi(0))
	-\int_0^T\int_0^L \tilde{\bfeta}_{N}\partial_t\bfphi dzdt=\int_0^T\int_0^L  { \bv^{\#}_{N} \bfphi} dzdt,\quad \forall \phi\in C_0^1(0,T;\bL^2(0,L)),$$
	which together with Theorem \ref{skorohod} implies,
	$$
	-\int_0^T\int_0^L \bar{\tilde{\bfeta}}_{N}\partial_t\bfphi dzdt=\int_0^T\int_0^L{ \bar{ \bv}^{\#}_{N} \bfphi }dzdt,\quad \forall \bfphi\in C_0^1(0,T;\bL^2(0,L)).$$
	Then passing $N \rightarrow \infty$ we come to the desired conclusion. The second half of the statement can be proven similarly.

		We also have the following upgraded convergence results for the displacements. Namely,
	notice that Theorem \ref{skorohod} implies that for given $\frac32<s<2$ (see \cite{MC13} Lemma 3),
	\begin{align}\label{etaunif1}
		\bar{\bfeta}_{N} \rightarrow \bar{\bfeta} \text{ and } \bar{\bfeta}^*_{N} \rightarrow \bar{\bfeta}^* \,\,\,\text { in } L^\infty(0,T;\bH^s(0,L))\, a.s.
	\end{align}
	and thus the following uniform convergence result holds
	\begin{align}\label{etaunif}
		\bar{\bfeta}_{N} \rightarrow \bar{\bfeta} \text{ and } \bar{\bfeta}^*_{N} \rightarrow \bar{\bfeta}^*\,\,\, \text { in } L^\infty(0,T;\bC^1[0,L])\, a.s.
	\end{align}
%For $A^{\omega}_{\bar\bfeta^n_*}$ (and likewise $A^{\omega}_{\bar\bfeta^n}$) that satisfies \eqref{ale}, we define $\bar\bw_*^{n}=\frac{1}{\Delta t}(A^\omega_{\bar{\bfeta}_*^{n}}-A^\omega_{\bar{\bfeta}^{n-1}_*})$ (and $\bar\bw^n$) and, $\bar J^n_*=\text{det}\nabla A^\omega_{\bar{\bfeta}^n_*}$ (and $\bar J^n$) where, for any fixed $N$, 
Next, we define $\bar\bu^n=\bu^n\circ \phi_N, \bar\bv^n=\bv^n\circ \phi_N$, $\bar{\bfeta}^n={\bfeta}^n\circ \phi_N$ and $\bar{\bfeta}_*^n={\bfeta}_*^n\circ \phi_N$ %and $\bar J_*^{n}= J_*^{n}\circ\phi_N $ 
for all $n=1,2,..,N$. Using these notations, we define piecewise constant interpolations  $ J_{\bar\bfeta^*_N}$, $ A_{\bar\bfeta^*_N}$,  and the piecewise linear interpolation $\tilde { J}_{\bar\bfeta^*_N}, \tilde{A}_{\bar\bfeta^*_N}$. Then thanks to \eqref{etaunif}, \eqref{boundJ} and \eqref{boundsA1}, we have
\begin{equation}\label{convrest}
	\begin{split}
		& \nabla A_{\bar\bfeta^*_N} \rightarrow \nabla A_{\bar\bfeta^*} \text{ and } (\nabla A_{\bar\bfeta^*_N})^{-1} \rightarrow (\nabla A_{\bar\bfeta^*})^{-1} \quad \text{ in } L^\infty(0,T;\bC(\sO))\, a.s.\\
		&			 J^*_{\bar\bfeta^*_N} \rightarrow  J_{\bar\bfeta^*} =\text{det}(\nabla A_{\bar\bfeta^*})\quad \text{ in } L^\infty(0,T;C(\bar\sO))\, a.s.
		%\\	&			\bar S_N \rightarrow  S_{\bar\bfeta}\quad \text{ in } L^\infty(0,T;C(\bar\Gamma)) \, a.s.\\
	\end{split}
\end{equation}
Furthermore, $A_{\bar\bfeta^*} \in L^\infty(0,T;\bW^{2,p}(\sO)$ for $p<4$ and is the solution to \eqref{ale} for boundary data $\text{\bf id} + \bar\bfeta^*$ on $\Gamma$.

Next let $\bar\bw^*_N=\partial_t \tilde{A}_{\bar\bfeta^*_N}=\sum_{n=0}^N\frac{1}{\Delta t}(A^\omega_{\bar{\bfeta}_*^{n}}-A^\omega_{\bar{\bfeta}^{n-1}_*})\chi_{(t_n,t_{n+1})}$. Observe that, due to \eqref{ale}, for every $\omega\in\bar\Omega$, we have (see \cite{G11}):
\begin{align*}
\|\bw^*_N%-\partial_tA_{\bar\bfeta^*}
\|_{L^2(0,T;\bH^{s+\frac12}(\sO))}\leq C \|\bar\bv^{*}_N%-\bar\bv^*
\|_{L^2(0,T;\bH^s(0,L))},\quad \text{ for any } 0\leq s\leq  \frac12.
\end{align*}
 Thus using \eqref{vweak} we obtain,
\begin{align}\label{convw}
	\bar\bw^*_N \rightharpoonup \bar\bw^*\, \quad \text{ weakly in } L^2(0,T;\bH^{1}(\sO)),\quad \, \bar\bP-a.s,
\end{align}
where $\bar\bw^*$ satisfies \eqref{ale} with boundary values $\bar\bv^*$. Similarly, \eqref{boundJt2} gives us
\begin{align*}
	\partial_t \tilde J_{\bar\bfeta^*_N} \rightharpoonup 	\partial_t  J_{\bar\bfeta^*} \quad \text{ weakly in } L^2(0,T;\bL^{2}(\sO)),\quad \, \bar\bP-a.s.
\end{align*}

	Finally, we give the definition of the required filtrations. we denote by $\sF_t'$ the $\sigma$-field generated by the random variables $(\bar{\bu}(s),\bar \bv(s)),\bar\bfeta(s),\bar{W}(s)$ for all $s \leq t$. Then we define
	\begin{align}\label{Ft}
		\mathcal{N} &:=\{\mathcal{A}\in \bar{\mathcal{F}} \ |  \bar \bP(\mathcal{A})=0\},\qquad
		\bar{\mathcal{F}}^0_t:=\sigma(\mathcal{F}_t' \cup \mathcal{N}),\qquad
		\bar{\mathcal{F}}_t :=\bigcap_{s\ge t}\bar{\mathcal{F}}^0_s.
	\end{align}
	We note here that Lemma \ref{uvtightC} and \eqref{etaunif} give us stochastic processes $(J_{\bar\bfeta^*}\bar\bu,\bar \bv)$  that are $(\bar\sF_t)_{t\geq 0}-$progressively measurable and thus helps in identifying the limit of the stochastic integral as we pass to the limit as $N \rightarrow\infty$. 
	
	For each $N$ we also define a filtration $\left(\bar{\mathcal{F}}^{N}_t \right)_{t \geq 0}$ on  $(\bar \Omega,\bar{\mathcal{F}},\bar \bP)$ the same way as above but using the processes $(\bar{\bu}_{N}(s),\bar{\bv}_{N}(s)),\bar\bfeta_N(s),\bar W_N(s)$  instead. Because of \eqref{newrv}, we can see that this pointwise definition of the filtration $(\bar\sF^N_t)_{t\geq 0}$ makes sense.  Moreover, using usual arguments we can see that $\bar W_N$ is an $\bar\sF^N_t$-Wiener process (see e.g. \cite{BFH18}).
\iffalse	Since the candidate solution for fluid and structure velocities, $\bar\bU=(\bar\bu,\bar \bv)$, is not regular enough to be a stochastic process in the classical sense we construct the $\sigma-$algebras $(\bar{\mathcal{F}}_t)_{t \geq 0}$ as follows:
		$$
	\sigma_t(\bar\bU):=\bigcap_{s\geq t}\sigma\left(\bigcup_{\bQ\in C^\infty_0([0,s];\sD)}\{(\bar\bU,\bQ)<1\}\cup \mathcal{N} \right).
	$$
	
	Let $\bar\sF_t'$ be the $\sigma-$ field generated by the random variables $\bar{\bfeta}(s),\bar{W}(s)$ for all $0\leq s \leq t$.
	Then we define
	\begin{align}\label{Ft1}
		\bar{\mathcal{F}}^0_t:=\bigcap_{s\ge t}\sigma(\bar{\sF}_s' \cup \mathcal{N}),\qquad
		\bar{\mathcal{F}}_t :=\sigma(\sigma_t(\bar\bU)\cup \bar{\mathcal{F}}^0_t).
	\end{align}
	\fi
Next, relative to the new stochastic basis $(\bar\Omega,\bar\sF,(\bar\sF^{N}_t)_{t\geq0},\bar\bP,\bar W_N)$, 
	thanks to \eqref{newrv} we can see that for each $N$, the following equation holds $\bar\bP$-a.s. for every $t \in [0,T]$ and any $\bQ\in \sD\cap \sU_1$:
	\begin{equation}\begin{split}\label{approxsystem}
			&(\bar{\tilde\bU}_N,\bQ)=
			(\bu_0J_0,\bq)+( \bv_0,\bfpsi)	-\int_0^t\langle \sL_e(\bar\bfeta^+_N),\bfpsi \rangle -\ep\int_0^t\int_0^L\partial_z\bar\bv^{\#}_N\cdot\partial_z\bfpsi  \\
			&-\frac{1}2\int_0^t\int_\sO 
			\partial_t\tilde{ J}_{\bar\bfeta^*_N} \bar\bu^{+}_{N}\cdot \bq  +\int_0^t\int_{\sO}
			\partial_t \tilde{ J}_{\bar\bfeta^*_N} (2\bar{\tilde\bu}_{N}-\bar{\bu}_{N})\cdot\bq \\
			&-\frac12\int_0^t\int_{\sO} J_{\bar\bfeta^*_N}((\bar\bu^+_{N}-\bar \bw^{*}_{N} 
			)\cdot\nabla^{\bar\bfeta_{N}^*}\bar\bu^{+}_{N}\cdot\bq
			- (\bar\bu^{+}_{N}-
			\bar{\bw}^{*}_{N} 
			)\cdot\nabla^{\bar\bfeta_{N}^*}\bq\cdot\bar\bu^{+}_{N})\\
			&-2\nu\int_0^t\int_{\sO}(J_{\bar\bfeta^*_N})\bD^{\bar\bfeta_{N}^*}(\bar\bu^{+}_{N})\cdot \bD^{\bar\bfeta_{N}^*}(\bq) dxds -\frac1{\ep}\int_0^t \int_\sO   \text{div}^{\bar\bfeta^*_{N}}\bar\bu^{+}_{N}\text{div}^{\bar\bfeta^*_{N}}\bq \\
			&+\int_0^t\left( P_{{in}}\int_{0}^1q_z\Big|_{z=0}-P_{{out}}\int_{0}^1q_z\Big|_{z=1}\right)
			+\int_0^t 
			(G({\bar\bu}_{N},\bar \bv_N,\bar\bfeta^*_{N})d\bar W_N, \bQ).
		\end{split}
	\end{equation}
Using the convergence results stated in Theorem \ref{skorohod} and a density argument, we can pass $N \to \infty$ in \eqref{approxsystem} to arrive at the following theorem. 	The proof of the following result is similar to and simpler than the proof of Theorem \ref{exist2} and thus we refer the reader to the proof of Theorem \ref{exist2} for further details. We do mention here how we deal with the convective term. By integrating by parts we obtain
\begin{equation}\begin{split}\label{byparts}
	\int_\sO  J_{\bar\bfeta^*_N} \bar\bw_N^*\cdot\nabla^{\bar\bfeta_N^*}\bar\bu^+_N\cdot\bq&=-\int_{\sO} {J}_{\bar\bfeta^*_N} \nabla^{\bar\bfeta^*_N}\cdot\bar\bw_N^*\bar{\bu}^+_N \cdot\bq-\int_{\sO} {J}_{\bar\bfeta^*_N} \bar\bw^*_N\cdot \nabla^{\bar\bfeta^*_N}\bq \cdot \bar{\bu}^+_N\\
	&+\int_\Gamma S_{\bar\bfeta_N^*}(\bar\bv_N^*\cdot\bn_N^*)(\bar\bu^+_N\cdot\bq),
\end{split}	\end{equation}
	where $S_{\bar\bfeta^*_N}$ is the Jacobian of the transformation from Eulerian to Lagrangian coordinates.
%Observe that due to \eqref{convw}, the trace of $\bar\bw^*_N$ converges weakly in $L^2(0,T;\bH^{\frac12}(\Gamma))$ and thus in $L^2(0,T;\bL^{p}(\Gamma))$ for any $p<\infty$. 
Thus, using the weak and strong convergence results \eqref{convu}, \eqref{vweak}, \eqref{convw} and \eqref{etaunif} we can pass $N\to \infty$ in the terms above.

	We are now ready to state the main result of this section.
	The following theorem establishes the desired existence result for the approximate system containing the penalty term.% except with $\bar{\bfeta}$ replaced by the artificial structure random variable $\bar{\bfeta}^*$ in certain terms. %Thus we obtain the desired martingale solution in the sense of Definition \ref{def:martingale} for as long as $\bar{\bfeta}$ and $\bar{\bfeta}^*$ are equal.
		\begin{theorem}[Existence for the problem with penalized compressibility]\label{exist1} 
		For the stochastic basis $(\bar\Omega,\bar\sF,(\bar\sF_t)_{t \geq 0},\bar\bP,\bar W)$ as found in Theorem \ref{skorohod}, given any fixed $\ep>0$ and $\delta=(\delta_1,\delta_2)$ satisfying \eqref{delta}, the processes $(\bar\bu,\bar\bfeta,\bar\bfeta^*)$ obtained in Theorem \ref{skorohod} are such that $\bar\bfeta^*$ and $(\bar J_{\bfeta^*}\bar\bu,\partial_t\bar\bfeta)$ are $(\bar\sF_t)_{t \geq 0}$-progressively measurable with $\bar\bP$-a.s. continuous paths in $\bH^s(0,L)$, $\frac32<s<2$ and $\sV_1'$ respectively {and such that the following weak formulation 
			holds $\bar\bP$-a.s. for every $t\in[0,T]$ and for every $\bQ\in \sD$}:
		\begin{equation}\begin{split}\label{martingale1}
				&(	J_{\bar\bfeta^*}(t)\bar\bu (t),\bq)+(\partial_t\bar{ \bfeta}(t),\bfpsi)
				=(\bu_0(J_0),\bq)+( \bv_0,\bfpsi)\\
				&-\int_0^t\langle \sL_e(\bar\bfeta),\bfpsi \rangle -\ep\int_0^t\int_0^L\partial_z\partial_t\bar\bfeta\cdot\partial_z\bfpsi  +\frac1{\ep}\int_0^t \int_\sO   \text{div}^{\bar\bfeta^*}\bar\bu\text{ div}^{\bar\bfeta^*}\bq\\
				&-\frac12\int_0^t\int_{\sO}(J_{\bar\bfeta^*})(\bar\bu\cdot\nabla^{\bar\eta ^*}\bar\bu \cdot\bq
				- (\bar\bu^{}-2\bar\bw^*)\cdot\nabla^{\bar\eta ^*}\bq\cdot\bar\bu ) +\frac12\int_0^t\int_\Gamma S_{\bar\bfeta^*}(\partial_t\bar\bfeta^*\cdot\bn^*)(\partial_t\bar\bfeta\cdot\bfpsi)				\\
				&-2\nu\int_0^t\int_{\sO}J_{\bar\bfeta^*}\bD^{\bar\eta ^*}(\bar\bu )\cdot \bD^{\bar\eta ^*}(\bq)  +\int_0^t\left( P_{{in}}\int_{0}^1q_z\Big|_{z=0}-P_{{out}}\int_{0}^1q_z\Big|_{z=1}\right)\\
				&+\int_0^t
				(	G(\bar\bu,\partial_t\bar\bfeta ,\bar\bfeta^*)d\bar W, \bQ),
			\end{split}
		\end{equation}
		where $\bar\bw^*=\partial_tA_{\bar\bfeta^*}$, $J_{\bar\bfeta^*}=det\nabla A_{\bar\bfeta^*}$ and $\bn^*$ is the unit normal to $\Gamma_{\bar\bfeta^*}$.
	\end{theorem}
Before proceeding we make a few comments. First, observe that, to arrive at the form \eqref{martingale1}, we used the fact that
\begin{align}\label{Jform}
	\int_0^t\int_{\sO}\partial_tJ_{\bar\bfeta^*}\bar{\bu} \cdot\bq=\int_0^t\int_{\sO}J_{\bar\bfeta^*} \nabla^{\bfeta^*}\cdot\bw^*\bar{\bu} \cdot\bq.
\end{align}

		Next, we argue that 
		\begin{align}\label{etasequal1}
			\bar{\bfeta}^*(t)=\bar{\bfeta}(t) \quad \text{ for any } t<T^{\bfeta}, \quad \bar\bP-a.s.
		\end{align}	
		 where for a given $\delta=(\delta_1,\delta_2)$,
			$$	T^{\bfeta}  := T\wedge\inf\{t>0:\inf_{\sO}(J_{\bar\bfeta}(t,z,r))\leq \delta_1 \text{ or } \|\bar{\bfeta}(t)\|_{\bH^s(0,L)}\geq \frac1{\delta_2} \}.$$
		Indeed, to show that this is true, let us introduce the following stopping times.
		For $\frac32<s<2$ we define
		\begin{align*}
			T^{\bfeta}_N &:=T\wedge \inf\{t> 0:\inf_{\sO}( J_{\bar\bfeta_N}(t,z,r))\leq \delta_1 \text{ or } \|\bar{\bfeta}_N(t)\|_{\bH^s(0,L)}\geq \frac1{\delta_2}\}.
		\end{align*}
		Then thanks to  \eqref{etaunif}, $T^{\bfeta} \leq \liminf_{N\rightarrow\infty}T^{\bfeta}_N$ a.s. 
		Observe further that for almost any $\omega\in\bar\Omega$ and $t<T^{\bfeta}$, and for any $\epsilon>0$, there exists an $N$ such that
		\begin{align*}
			\|\bar{\bfeta}(t)-\bar{\bfeta}^*(t)\|_{\bH^s(0,L)}&<\|\bar{\bfeta}(t)-\bar{\bfeta}_N(t)\|_{\bH^s(0,L)}+\|\bar{\bfeta}^*_N(t)-\bar{\bfeta}_N(t)\|_{\bH^s(0,L)}+\|\bar{\bfeta}^*(t)-\bar{\bfeta}^*_N(t)\|_{\bH^s(0,L)}\\
			&<\epsilon.
		\end{align*}
		This is true because for any $\epsilon>0$ there exists an $N_1 \in \mathbb{N}$ such that the first and the last terms on the right side of the above inequality
		are each bounded by $\frac{\epsilon}2$ for any $N\geq N_1$ thanks to the uniform convergence \eqref{etaunif}.
		Furthermore, since $t<T^{\bfeta}_N$, for infinitely many $N$'s, the second term is equal to 0. Hence we conclude that indeed
		\begin{align}
			\bar{\bfeta}^*(t)=\bar{\bfeta}(t) \quad \text{ for any } t<T^{\bfeta}, \quad \bar\bP-a.s.
		\end{align}

	\section{Passing to the limit $\ep\to 0$}
		In this section, to emphasize the dependence on the parameter $\ep>0$, we will use the notation $(\bar \bu_\ep,\bar \bv_\ep,\bar \bv^*_\ep, \bar \bfeta_\ep,\bar\bfeta^*_\ep,\bar W_\ep)$ and $(\Omega^\ep,\sF^\ep,(\sF^\ep_t)_{t\geq 0},\bP^\ep)$ to denote the solution and the filtered probability space found in the previous section.  
	In what follows, we will pass $\ep \rightarrow 0$ in \eqref{martingale1} with appropriate test functions. Most of the results in the first half of this section can be proven as in the previous section. Hence we only summarize the important theorems without proof here.
	
	First observe that, thanks to the weak lower-semicontinuity of norm, the uniform estimates obtained in the previous section still hold. That is, as a consequence of Lemmas \ref{bounds} and Theorem \ref{skorohod}, we have the following {uniform boundedness} result. 
	\begin{lem}[Uniform boundedness]\label{boundsep}
		For a fixed $\delta=(\delta_1,\delta_2)$ satisfying \eqref{delta}, we have for some $C>0$ {\bf independent of} $\ep$ that
		\begin{enumerate}
			\item $\bE^\ep\|\bar\bu_\ep\|^2_{L^\infty(0,T;\bL^2(\sO))\cap L^2(0,T;V)}<C.$
			\item $\bE^\ep\|\bar \bv_\ep\|^2_{L^\infty(0,T;\bL^2(0,L))\cap L^2(0,T;\bH^\frac12(0,L))}<C$ and thus $\bE^\ep\|\bar \bv_\ep\|^2_{ L^2(0,T;\bL^p(0,L))}<C$, $\forall p<\infty$.
			\item $\bE^\ep\|\bar \bv^*_\ep\|^2_{L^\infty(0,T;\bL^2(0,L))}<C$.
			\item $\bE^\ep\|\bar \bfeta_\ep\|^2_{L^\infty(0,T;\bH^2_0(0,L)\,\cap\, W^{1,\infty}(0,T;\bL^2(0,L)))}<C.$
			\item $\bE^\ep\|\bar \bfeta^*_\ep\|^2_{L^\infty(0,T;\bH^2_0(0,L)\,\cap\, W^{1,\infty}(0,T;\bL^2(0,L)))}<C.$
			\item $\bE^\ep\|\text{div}^{\bar\bfeta^*_\ep}\bar\bu_\ep\|^2_{L^2(0,T;L^2(\sO))} < C{\ep}$.
			\item $\bE^\ep\|\sqrt{\ep} \partial_z\bar\bv_\ep\|^2_{L^2(0,T;\bL^2(0,L))} < C$.
		\end{enumerate}
	\end{lem}
	Next we state the tightness results:
		\begin{lem}[Tightness of the laws]\label{tightep}
		\begin{enumerate}
			\item The laws of $\bar\bu_\ep$ and $\bar \bv_\ep$ are tight in $L^2(0,T;\bH^\alpha(\sO))$ and $ L^2(0,T;\bH^{\beta}(0,L))$ respectively for any $0\leq \alpha<1$ and $0\leq \beta<\frac12$.
			\item The laws of $\bar \bfeta_\ep$ and that of $\bar \bfeta^*_\ep$ are tight in $C([0,T];\bH^s(0,L))$ for $\frac32<s<2$.
			\item The laws of $\|\bar \bu_\ep\|_{L^2(0,T;V)}$ are tight in $\mathbb{R}$.
			\item The laws of $\|\bar \bv^*_\ep\|_{L^2(0,T;\bL^2(0,L))}$ are tight in $\mathbb{R}$.
		\end{enumerate}
	\end{lem}
		\begin{proof}
		The proof of the first statement follows as the proof of Lemma \ref{tightuv} almost identically. Construction of suitable test functions is the same as \eqref{Qn} and we apply a variant of It\^o's formula as stated in 
		Lemma 5.1 in \cite{BO13}, to "test" \eqref{martingale1} with the continuous-in-time versions of the test functions \eqref{Qn} (i.e. where $(\Delta t)\sum_{n-j+1}^n$ is replaced by $\int_{t-h}^t dt$). Recall that all the bounds obtained in the proof of Lemmas \ref{tightuv}, except in \eqref{ep2}, are independent of $\ep$. However, observe that due to integrating by parts in \eqref{byparts} and applying \eqref{Jform}, we do not have the aforementioned term containing the derivatives of $\bar\bw^*$ in our weak formulation \eqref{martingale1} anymore; and instead we have the boundary term $\int_0^t\int_\Gamma S_{\bar\bfeta_\ep^*}(\bar\bv_\ep^*\cdot\bn_\ep^*)(\bar\bv_\ep\cdot\bfpsi)$. Then, for the test function $\bfpsi_\ep$, which is a suitable regularized approximation of $\bar\bv_\ep$ described above and constructed as in \eqref{Qn}, we can find $\ep-$ independent bounds for this boundary integral by using the fact that $\|\bar\bn^*_\ep\|_{\bL^\infty((0,T)\times(0,L))}<C(\delta)$ and the bounds Lemma \ref{boundsep}(2),(3) for $\bar\bv_\ep$ and $\bar\bv^*_\ep$ respectively.
	\end{proof}
		Now for an infinite denumerable set of indices $\Lambda$, we let 
	$\mu_{\ep}$ be the law of the random variable $\bar{\mathcal{U}}_{\ep}:=(
	\bar{\bu}_{\ep},\bar \bv_{\ep},\bar\bfeta_{\ep}, \bar\bfeta^*_{\ep}, 
	\|\bar \bu_\ep\|_{L^2(0,T;V)},\|\bar \bv^{*}_{\ep}\|_{L^2(0,T;\bL^2(0,L))},
	\bar{W}_\ep)$
	taking values in the phase space
	\begin{align*}
		\sS:=L^2(0,T;\bH^{\frac34}(\sO))\times L^2(0,T; \bH^{\frac14}(0,L)) \times[ C([0,T],\bH^s(0,L))]^2\times \mathbb{R}^2\times
		C([0,T];U),
	\end{align*}
	for $\frac32<s<2$.
	
	Then tightness of $\mu_\ep$ on $\sS$ and an application of the Prohorov theorem and the Skorohod representation theorem gives us the following almost sure representation and convergence.
	\begin{theorem}[Almost sure convergence in $\ep$]\label{skorohod2} There exists a probability space $(\hat\Omega,\hat\sF,\hat\bP)$ and random variables $\hat{\mathcal{U}}_{\ep}=(
		\hat{\bu}_{\ep},\hat \bv_{\ep},\hat\bfeta_{\ep}, \hat\bfeta^*_{\ep}, m_\ep,k_\ep,
		\hat{W}_\ep)$ and
		$\hat{\mathcal{U}}=(
		\hat{\bu},\hat \bv,\hat\bfeta, \hat\bfeta^*,m,k,
		\hat{W})$
		such that
		\begin{enumerate}
			\item $\hat{\mathcal{U}}_{\ep}=^d\bar{\mathcal{U}}_{\ep}$ for every $\ep \in \Lambda$.
			\item $\hat{\mathcal{U}}_{\ep} \rightarrow \hat{\mathcal{U}}$ $\hat\bP$-a.s. in the topology of $\sS$ as $\ep\rightarrow 0$.
			\item $\partial_t\hat\bfeta=\hat \bv$ and $\partial_t\hat\bfeta^*=\hat \bv^*$, in the sense of distributions, almost surely.
		\end{enumerate}
	\end{theorem}
	
	To pass to the limit as $\ep \to 0$ we will need stronger convergence of the fluid velocity random variables to compensate for the fact that our construction of	 test functions presented below does not lead to uniform convergence of the test functions as $\ep\to 0$. See \eqref{convq}-\eqref{convqt}.		For this purpose we start by recalling 
	again that Theorem 1.10.4 in \cite{VW96} implies that the random variables $\hat{\mathcal{U}}_\ep$ can be chosen such that for every $\ep \in \Lambda$,
	\begin{align}\label{newrv2}
		\hat{\mathcal{U}}_\ep(\omega)=\bar{\mathcal{U}}_\ep(\phi_\ep(\omega)), \quad \omega \in \hat\Omega,
	\end{align} and $\hat\bP\circ\phi_\ep^{-1}=\bP^\ep$, where $\phi_\ep:\hat\Omega\rightarrow \Omega^\ep$ is measurable. 
	
	Thanks to these explicit maps we identify the real-valued random variables $m_\ep$ as $m_\ep= \|\hat \bu_\ep\|_{L^2(0,T;V)}$ and notice that $m_\ep$ converge almost surely due to  Theorem~\ref{skorohod2}.}
	The almost sure convergence of $m_\ep$ implies that $\|\hat \bu_\ep\|_{L^2(0,T;V)}$ is bounded a.s. and thus also that, up to a subsequence,
	\begin{align}\label{uweak2}
	\hat \bu_\ep \rightharpoonup \hat\bu \quad\text{ weakly in } {L^2(0,T;V)} \quad \hat\bP-a.s.
	\end{align}
	Similarly,
	\begin{align}\label{vweak2}
	\hat \bv^*_\ep \rightharpoonup \hat \bv^* \quad\text{ weakly in } {L^2(0,T;\bL^2(0,L))} \quad \hat\bP-a.s.
	\end{align}
	\iffalse
	Observe also that, since $\sO$ is a Lipschitz domain, the space ${\bH^\gamma(\sO)}$ is continuously embedded in ${\bL^q(\sO)}$  for any $q\in[1,\frac2{1-\gamma}]$ and  $\gamma<1$ and thus statement (2) implies that
	\begin{align}\label{ul4}
	\hat\bu_\ep \rightarrow \hat\bu \quad \text{in } L^2(0,T;\bL^4(\sO)), \quad \hat\bP-a.s.
	\end{align}
	Similarly, the embedding $H^{\frac14}(0,L) \hookrightarrow L^4(0,L)$ gives us
	\begin{align}\label{vlq}
		\hat\bv_\ep \to \hat\bv \quad \text{in } L^2(0,T;\bL^4(0,L)), \quad \hat\bP-a.s.
	\end{align}
		The convergence results  \eqref{uweak2}, \eqref{ul4}, \eqref{vlq} are required to pass to the limit as $\ep \to 0$ in the weak formulation 
	in conjunction with the convergence of the test functions given below in  \eqref{convq}-\eqref{convqt}.
	\fi
As in the previous section, we also have
\begin{equation}\label{convrest1}
	\begin{split}
		&  A_{\hat\bfeta^*_\ep} \rightarrow  A_{\hat\bfeta^*} \text{ and } ( A_{\hat\bfeta^*_\ep})^{-1} \rightarrow ( A_{\hat\bfeta^*})^{-1} \quad \text{ in } L^\infty(0,T;\bW^{2,p}(\sO))\, a.s. \text{ for any } p<4.\\
		&			 J_{\hat\bfeta^*_\ep} \rightarrow  J_{\hat\bfeta^*} =\text{det}(\nabla A_{\hat\bfeta^*})\quad \text{ in } L^\infty(0,T;C(\hat\sO))\, a.s.
		\\	&			\hat S_{\hat\bfeta^*_\ep} \rightarrow  S_{\hat\bfeta}\quad \text{ in } L^\infty(0,T;C(\hat\Gamma)) \, a.s.\\
	&	\hat\bw^*_\ep \rightharpoonup \hat\bw^*\, \quad \text{ weakly in } L^2(0,T;\bH^{\frac12}(\sO)),\quad \, \hat\bP-a.s,\\
		&\hat	\bn^*_\ep \to \hat\bn^*\, \quad \text{  in } L^\infty(0,T;C(\bar\Gamma)),\quad \, \hat\bP-a.s.
	\end{split}
\end{equation}
Before we pass to the limit $\ep\to 0$, 
	we have one more obstacle to deal with. Namely,
	that the candidate solution for fluid and structure velocities, $\hat\bU=(\hat\bu,\hat \bv)$, is not regular enough in time to be a stochastic process in the classical sense as it only belongs to the space $L^2(0,T;\bL^2(\sO))\times L^2(0,T;\bL^2(0,L))$ (note that the equivalent of Lemma \ref{uvtightC} does not apply). Hence  we construct an appropriate filtration as follows. 
	Define the $\sigma-$fields
	$$
	\sigma_t(\hat\bU):=\bigcap_{s\geq t}\sigma\left(\bigcup_{\bQ\in C^\infty_0((0,s);\sD)}\{(\hat\bU,\bQ)<1\}\cup \mathcal{N} \right),\quad \mathcal{N}=\{\mathcal{A}\in \hat{\mathcal{F}} \ |  \hat\bP(\mathcal{A})=0\}.
	$$
	Let $\hat\sF_t'$ be the $\sigma-$ field generated by the random variables $\hat\bfeta(s),\hat{W}(s)$ for all $0\leq s \leq t$.
	Then we define
	\begin{align}\label{Ft1}
		\hat{\mathcal{F}}^0_t:=\bigcap_{s\ge t}\sigma(\hat{\sF}_s' \cup \mathcal{N}),\qquad
		\hat{\mathcal{F}}_t :=\sigma(\sigma_t(\hat\bU)\cup \hat{\mathcal{F}}^0_t).
	\end{align}
	This gives a complete, right-continuous filtration $(\hat{\mathcal{F}}_t)_{t \geq 0}$, on the probability space $(\hat\Omega,\hat{\mathcal{F}},\hat\bP)$, to which the noise processes and candidate solutions are adapted, see \cite{BFH18}.
	\begin{lem}\label{representative}
		There exists a stochastic process in $
		L^2(0,T;\bL^2(\sO))\times L^2(0,T; \bL^2(0,L))$ a.s. which is a $(\hat\sF_t)_{t\geq 0}$-progressively measurable representative of $\hat\bU=(\hat\bu, \hat \bv)$.
	\end{lem}
	
		\begin{theorem}\label{exist2} {\s\bf (Main result.)}		
		For any fixed $\delta$, for which \eqref{delta} is satisfied, {\s the stochastic processes $(\hat \bu,\hat {\bfeta},\hat{\bfeta}^*)$ constructed in Theorem~\ref{skorohod2}}
		satisfy
		\begin{equation}\begin{split}\label{martingale2}
				&(	{J}_{\hat\bfeta^*}(t)\hat\bu (t),\bq(t))+(\partial_t\hat{ {\bfeta}}(t),\bfpsi(t))
				=((J_0)\bu_0,\bq(0))+( \bv_0,\bfpsi(0))+\int_0^t\int_0^L \partial_t\hat {\bfeta} \,\partial_t{\bfpsi}\\
				&+\int_0^t\int_\sO{ J}_{\hat\bfeta^*}\hat\bu\cdot \partial_t{\bq} -\int_0^t \langle \sL_e(\hat{\bfeta}),\bfpsi \rangle -2\nu\int_0^t\int_{\sO}{J}_{\hat\bfeta^*}\, \bD^{\hat{\bfeta} ^*}(\hat\bu )\cdot \bD^{\hat{\bfeta} ^*}(\bq) \\
			%	&-\frac12\int_0^t\int_{\sO} J_{\hat\bfeta^*}  ((\hat\bu-\hat\bw^*	)\cdot\nabla^{\hat{\bfeta} ^*}\hat\bu \cdot\bq				- (\hat\bu^{}-\hat\bw^*)\cdot\nabla^{\hat{\bfeta} ^*}\bq\cdot\hat\bu -(\nabla^{\hat{\bfeta}^*}\cdot\hat\bw^*)\bq\cdot\hat\bu)\\
					&-\frac12\int_0^t\int_{\sO}J_{\hat\bfeta^*}(\hat\bu\cdot\nabla^{\hat\bfeta ^*}\hat\bu \cdot\bq
				- (\hat\bu^{}-2\hat\bw^*)\cdot\nabla^{\hat\bfeta ^*}\bq\cdot\hat\bu ) 		+\frac12\int_0^t\int_\Gamma S_{\hat\bfeta^*}(\hat\bv^*\cdot\hat\bn^*)(\hat\bv\cdot\bfpsi)\\
				&+ 				\int_0^t\left( P_{{in}}\int_{0}^1q_z\Big|_{z=0}dr-P_{{out}}\int_{0}^1q_z\Big|_{z=1}dr\right)				ds 
			+\int_0^t
				(	G(\hat\bu,\hat \bv ,\hat{\bfeta}^*)d\hat W , \bQ)  ,
			\end{split}
		\end{equation}
		$\hat\bP$-a.s. for almost every $t\in[0,T]$ and for any $(\hat\sF_t)_{t \geq 0}-$adapted process $\bQ=(\bq,\bfpsi)$ with continuous paths in $\sD$ such that $\nabla^{\hat{\bfeta}^*}\cdot\bq=0$ and $\bq|_{\Gamma}=\bfpsi$ a.s. 
	\end{theorem}
	\begin{proof}[Proof of Theorem \ref{exist2}]
We will begin the proof by giving a  construction for $\sD$-valued processes $(\bq_{\ep},\bfpsi_{\ep})$, satisfying appropriate boundary conditions on $\Gamma$ and such that $\bq_\ep$ satisfies the divergence-free condition. This is required so that the penalty terms in the fluid sub-problem drop out. 

We begin by constructing an appropriate test functions for the limiting equation \eqref{martingale2} as follows: Recall that the maximal domain $\sO_\delta=(0,L)\times(0,R_\delta)$ is a rectangular domain comprising of all the moving domains $\sO_{\hat\bfeta_\ep^*}$. Consider a smooth $(\hat\sF_t)_{t\geq0}$-adapted process $\bg=(g_z,g_r)$ on $\bar\sO_{\delta}$ such that $\nabla\cdot \bg=0$ and such that $\bg$ satisfies the required boundary conditions  $g_r=0 \text{ on } z=0,L, r=0$  and $\partial_r g_z=0 \text{ on }\Gamma_{b}$. Assume also that, for some $\bfpsi=(\psi_z,\psi_r)$, on the top lateral boundary of the moving domain associated with $\hat{\bfeta}^*$, $\Gamma_{\hat{\bfeta}^*}$, the function $\bg$ satisfies
$\bg(t)|_{\Gamma_{\hat{\bfeta}^*}(t)}=\bfpsi(t)$.
Next, we define
$$ \bq(t,z,r,\omega) = \bg(t,\omega) \circ\, A^\omega_{\hat{\bfeta}^*}(t)(z,r) .$$	
Now, to observe that $\bq$ is a suitable test function we define, for any $t\in[0,T]$ and given process $\bg$ as mentioned above, $\sC_\bg:\hat\Omega\times \bC([0,L]) \rightarrow \bC^1(\bar\sO)$ as $$\sC_{\bg}(\omega,{\bfeta})
=F_{\bfeta}(\bg(t,\omega)),$$
where $F_{\bfeta}({\bf f}):={\bf f}\circ A^\omega_{{\bfeta}}$ is a well-defined map from $\bC(\bar\sO_{{\bfeta}})$ to $\bC(\bar\sO)$ for  any ${\bfeta}\in \bC([0,L])$. Thanks to the continuity of the composition operator $F_{\bfeta}$ and the assumption that $\bg(t)$ is $\hat\sF_t-$measurable, we obtain that, for any ${\bfeta}$, the $\bC^1(\bar\sO)$-valued map $\omega \mapsto \sC_\bg(\omega,{\bfeta})$ is $\hat\sF_t$-measurable (where $\bC^1(\bar\sO)$ is endowed with Borel $\sigma$-algebra).
Note also that for any fixed $\omega$, the map  ${\bfeta}\mapsto \sC_\bg(\omega,{\bfeta})$ is continuous.
Hence we deduce that $\sC_\bg$ is a Carath\'eodory function.
Now by the construction of the filtration $(\hat\sF_t)_{t\geq 0}$ in \eqref{Ft1}, we know that $\hat{\bfeta}^*$, is $(\hat\sF_t)_{t\geq 0}$-adapted. Therefore, we conclude that the $\bC^1(\bar\sO)$-valued process $\bq$, which by definition is $\bq(t,\omega)=\sC_\bg(\omega,\hat{\bfeta}^*(t,\omega))$ is $(\hat\sF_t)_{t\geq 0}$-adapted as well. The same conclusions for the process $\bfpsi$ follow using the same argument.

We summarize here that the $\{\hat\sF_t\}_{t \geq 0}-$adapted processes $(\bq,\bfpsi)$ have continuous paths in $\sD$ such that $$\nabla^{\hat\bfeta^*}\cdot\bq=0 \text{ and } \bq\Big|_{\Gamma}=\bfpsi.$$
We have that for any $\omega\in\hat\Omega$ that $\bq\in L^\infty(0,T;\bH^{2}(\sO)) \cap H^1(0,T;\bH^1(\sO))$.
Now we define the approximate test functions $(\bq_{\ep},\bfpsi_{\ep})$, using the Piola transformation as in the proof of Lemma \ref{tightuv}, as follows:
\begin{align*}
	&\bq_{\ep}={J^{-1}_{\hat\bfeta^*_\ep}}\nabla A_{\hat\bfeta^*_{\ep}}{J^{}_{\hat\bfeta^*}} \nabla  A^{-1}_{\hat\bfeta^*} \left( \bq-\bfpsi\chi\right) +\bfpsi\chi-\frac{\lambda^{\hat{\bfeta}^*_{\ep}}-\lambda^{\hat\bfeta^*}}{\lambda^{\ep}_0}(\xi_0\chi)\\
	&+{J^{-1}_{\hat\bfeta^*_\ep}}\sB\left( \text{div}\left( (J_{\hat\bfeta^*}(\nabla A_{\hat\bfeta^*})^{-1}-J_{\hat\bfeta_\ep^*}(\nabla A_{\hat\bfeta_\ep^*})^{-1})\bfpsi\chi-\frac{\lambda^{\hat{\bfeta}^*_{\ep}}-\lambda^{\hat\bfeta^*}}{\lambda^{\ep}_0}J_{\hat\bfeta_\ep^*}(\nabla A_{\hat\bfeta_\ep^*})^{-1}\xi_0\chi\right) \right),
\end{align*}	
and,
\begin{align*}	
	&	\bfpsi_{\ep}= \bfpsi-\frac{\lambda^{\hat{\bfeta}^*_{\ep}}-\lambda^{\hat{\bfeta}^*}}{\lambda^{\ep}_0}\xi_0,
\end{align*}	
where we pick an appropriate $\xi_0 \in \bC_0^\infty(\Gamma)$ such that $\lambda_0^\ep$ defined below is not 0 for any $\ep>0$, 
$$\lambda_0^{\ep}(t)=-\int_\Gamma ({\bf id}+\hat{\bfeta}^*_{\ep}(t))\times \partial_z\xi_0\,dz \quad\text{and},$$
$$\lambda^{\hat{\bfeta}^*_{\ep}}(t)=-\int_\Gamma ({\bf id} +\hat{\bfeta}_{\ep}^*(t))\times \partial_z\bfpsi (t)dz,\quad \lambda^{\hat{\bfeta}^*_{}}(t)=-\int_\Gamma ({\bf id} +\hat{\bfeta}_{}^*(t))\times \partial_z\bfpsi (t)dz.$$
Here $\chi(r)$ is a smooth function on $\sO$ such that $\chi(1)=1$ and $\chi(0)=0$.
Finally, $\sB:L^2(\sO) \to \bH^1_0(\sO)$ is Bogovski's operator (see \cite{Galdi}) which is used to correct the divergence of the extra terms appearing in the definition of $\bq_\ep$ due to the extension of structure velocities in the fluid domains. Note that, $\sB$ can be continuously extended to a bounded operator from $W^{l,p}_0(\sO)$ to $\bW^{l+1,p}_0(\sO)$ for any $l>-2+\frac1p$ (see \cite{GHH06}, \cite{Galdi}).

Now observe that, thanks to the properties of the Piola transformation (see e.g. Theorem 1.7 in \cite{C88}), we have
$$\nabla^{\hat\bfeta^*_\ep} \cdot\bq_{\ep}=J^{}_{\hat\bfeta^*}J^{-1}_{\hat\bfeta_\ep^*} \nabla^{\hat\bfeta^*}\cdot\bq=0, \qquad \text{and}\qquad \bq_\ep|_\Gamma=\bfpsi_\ep.$$
Observe that thanks to Theorem \ref{skorohod2}, we have that $\lambda^{\hat{\bfeta}^*_{\ep}}\to \lambda^{\hat{\bfeta}^*}$ in $L^\infty(0,T)$ a.s. 
Additionally, thanks to \eqref{convrest1} 
we obtain
\begin{align}
\notag	\|	\bq_\ep-\bq\|_{L^\infty(0,T;\bH^1(\sO))} &\leq \|A_{\hat\bfeta^*_\ep}-A_{\hat\bfeta^*}\|_{L^\infty(0,T;\bW^{2,3}(\sO))}\left( \|\bq\|_{L^\infty(0,T;\bH^1(\sO))}+\|\bfpsi\|_{L^\infty(0,T;\bH^1(0,L))}\right) \\
	&+\|\lambda^{\hat{\bfeta}^*_{\ep}}-\lambda^{\hat{\bfeta}^*}\|_{L^\infty(0,T)}\to 0 \qquad \hat\bP-\text{ a.s.}\label{convq}
\end{align}
Observe also that
$$|\frac{d}{dt}\lambda^{\hat{\bfeta}^*_{\ep}}|
\leq \|\hat\bv^*_\ep\|_{\bL^{2}(0,L)}\|\bfpsi\|_{\bH^{1}(0,L)}+\|\hat\bfeta^*_\ep\|_{\bL^\infty(0,L)}\|\partial^2_{tz}\bfpsi\|_{\bL^\infty(0,L)}.$$
Hence $\lambda^{\hat{\bfeta}^*_{\ep}}\to \lambda^{\hat{\bfeta}^*}$ weakly 
in $H^1(0,T)$ a.s. 
Furthermore, we have (see e.g. \cite{BH21}), for
 $1<m<\frac32$, that
\begin{align*}
	\|\partial_t\bq_\ep\|_{\bH^{-\frac12}(\sO)} &\leq C(\delta)\|\bw^*_\ep\|_{\bH^{\frac12}(\sO)}\| \nabla A_{\hat\bfeta^*}\|_{\bH^{m}(\sO)}\|\bq\|_{\bH^m(\sO)} + C(\delta)\|\partial_t\bq\|_{\bL^2(\sO)}\\
	&+\| \lambda^{\hat\bfeta^*_\ep}-\lambda^{\hat\bfeta^*}\|_{H^1(0,T)}\leq C.
\end{align*}
Hence, we obtain that,
\begin{align}\label{convqt}
	\partial_t\bq_\ep \rightharpoonup \partial_t\bq
	\quad\text{ weakly in } L^2(0,T;\bH^{-\alpha}(\sO)), \quad \text{ for any } 1/2\leq \alpha\leq 1\quad \hat\bP-a.s. \, 
\end{align}
Similarly, for any $k$ we have that 
\begin{equation}\begin{split}\label{convpsi}
		\bfpsi_{\ep} \rightarrow \bfpsi \quad \text{ in }L^\infty(0,T;\bC^k(\bar\Gamma)),\, \quad \hat\bP-a.s.\\
		\partial_t\bfpsi_{\ep} \rightharpoonup \partial_t \bfpsi \quad \text{ weakly in }L^2(0,T;\bC^k(\Gamma)),\, \quad \hat\bP-a.s.
	\end{split}
\end{equation}
			We are now in a position to take the limit as $\ep\to 0$ in the weak formulation \eqref{martingale2}. Namely, we consider \eqref{martingale2}  
			and use $\bQ_{\ep}=(\bq_{\ep},\bfpsi_\ep)$ as the test functions.
			This, as mentioned earlier, requires a special version of the It\^o formula given in Lemma 5.1 in \cite{BO13}. We obtain that
		\begin{equation}\begin{split}\label{epeq}
				&(	(\hat{  J}_\ep^*(t))\hat{ \bu}_\ep(t), \bq_\ep(t))+(\hat{  \bv}_\ep(t), \bfpsi_\ep(t))
				=(J_0\bu_0,\bq(0))+(\bv_0,\bfpsi(0))\\
				&-\int_0^t\langle \sL_e(\hat{\bfeta}_\ep),\bfpsi_\ep\rangle -\ep\int_0^t\int_0^L\partial_z\hat\bv\cdot\partial_z\bfpsi 
				+\int_0^t\int_{\sO}  {\hat J}_\ep^*\hat{ \bu}_\ep\cdot {\partial_t  \bq_\ep}+\int_0^t\int_0^L {\partial_t { \bfpsi}_\ep}\cdot\hat\bv_\ep\\
			%	& +\frac12\int_0^t\int_{\sO}\frac{\partial  {\hat J}^*_\ep}{\partial t}\hat{ \bu}_\ep \bq_\ep\\
						&-\frac12\int_0^t\int_{\sO}{\hat J}_\ep^*(\hat\bu_\ep\cdot\nabla^{\hat{\bfeta}_\ep^*}\hat\bu^{}_\ep\cdot\bq
				_\ep- (\hat\bu_\ep-
				2\hat\bw^*_\ep)\cdot\nabla^{\hat{\bfeta}_\ep^*}\bq_\ep\cdot\hat\bu^{}_\ep)\\
					&+\frac12\int_0^t\int_\Gamma S_{\hat\bfeta^*_\ep}(\hat\bv^*_\ep\cdot\hat\bn^*_\ep)(\hat\bv_\ep\cdot\bfpsi_\ep)-2\nu\int_0^t\int_{\sO}({\hat J}_\ep^*)\bD^{\hat{\bfeta}_\ep^*}(\hat\bu_\ep)\cdot \bD^{\hat{\bfeta}_\ep^*}(\bq_\ep) \\
				&+ 				\int_0^t\left( P_{{in}}\int_{0}^1(q_\ep)_z\Big|_{z=0}-P_{{out}}\int_{0}^1(q_\ep)_z\Big|_{z=1}\right)
				+\int_0^t
				 (G({\hat \bu}_\ep,\hat\bv_\ep,\hat{\bfeta}^*_\ep)d\hat W_\ep. \bQ_\ep),.
		\end{split}\end{equation}
			We can now pass $\ep\to 0$ in the deterministic terms in \eqref{epeq} using the convergence results obtained in Theorem \ref{skorohod2}, \eqref{convrest1}, \eqref{convq}, \eqref{convqt}, \eqref{convpsi} (see \cite{TC23}). We only demonstrate the passage of $\ep\to 0$ in the stochastic term.		
		Observe that, by construction, $\bQ$ is $(\hat\sF_t)_{t\geq 0}$-adapted.
		\begin{lem}\label{conv_G}
			The processes $
			\left( \int_0^t(G(\hat{\bu}_{\ep}(s),\hat\bv_\ep(s),\hat{\bfeta}^*_{\ep}(s))d\hat{W}_{\ep}(s),\bQ_{\ep}(s))\right) _{t \in [0,T]}$ converge to the processes $\left( \int_0^t(G(\hat{\bu}(s),\hat\bv(s),\hat{\bfeta}^*(s))d\hat{W}(s),\bQ(s))\right) _{t \in [0,T]}$  in $L^1(\hat{ \Omega};L^1(0,T;\mathbb{R}%\bL^2(\sO)\times L^2(0,L)
			))$ as $ \ep\to 0$.
		\end{lem}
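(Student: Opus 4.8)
The plan is to combine the almost sure convergences obtained after the Skorohod representation (Theorem~\ref{skorohod}, \eqref{etaunif}, \eqref{convq}, \eqref{convpsi}) with the now-classical stability theorem for stochastic integrals, and then upgrade the resulting convergence in probability to convergence in $L^1(\bar\Omega;L^1(0,T;\R))$ by a uniform-integrability argument. Throughout, write $\Phi_N(s):=G(\bar\bu_N(s),\bar\bv_N(s),\bar\bfeta^*_N(s))$ and $\Phi(s):=G(\bar\bu(s),\bar\bv(s),\bar\bfeta^*(s))$, and regard the scalar integrand as the Hilbert--Schmidt operator $\bQ_N^{\top}\Phi_N\in L_2(U_0;\R)$ (and analogously in the limit), so that $(\Phi_N\,d\bar W_N,\bQ_N)=\int_0^{\,\cdot}\bQ_N^{\top}\Phi_N\,d\bar W_N$.

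The first step is to show that $\bQ_N^{\top}\Phi_N\to\bQ^{\top}\Phi$ in $L^2(0,T;L_2(U_0;\R))$, $\bar\bP$-a.s. By the triangle inequality and the Lipschitz bounds \eqref{growthG},
\[
\|\Phi_N-\Phi\|_{L_2(U_0;\bL^2)}\le \|\bar\bfeta^*_N\|_{\bL^\infty(0,L)}\|\bar\bu_N-\bar\bu\|_{\bL^2(\sO)}+\|\bar\bv_N-\bar\bv\|_{\bL^2(0,L)}+\|\bar\bfeta^*_N-\bar\bfeta^*\|_{\bL^\infty(0,L)}\|\bar\bu\|_{\bL^2(\sO)} .
\]
Since $\|\bar\bfeta^*_N\|_{\bL^\infty(0,L)}\le C(\delta)$ by \eqref{AS}, $\bar\bfeta^*_N\to\bar\bfeta^*$ in $L^\infty(0,T;\bC^1[0,L])$ by \eqref{etaunif}, and $(\bar\bu_N,\bar\bv_N)\to(\bar\bu,\bar\bv)$ a.s. in $L^2(0,T;\bL^2(\sO))\times L^2(0,T;\bL^2(0,L))$, the right-hand side tends to $0$ in $L^2(0,T)$ a.s.; the same convergences together with \eqref{growthG} and \eqref{AS} also show that $\int_0^T\|\Phi_N\|_{L_2(U_0;\bL^2)}^2\,ds$ is a.s. bounded uniformly in $N$. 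Combining this with $\bQ_N\to\bQ$ a.s. in $L^\infty(0,T;\bL^2)$ (from \eqref{convq}, \eqref{convpsi}) and the bound $\|\bQ_N^{\top}\Phi_N-\bQ^{\top}\Phi\|_{L_2(U_0;\R)}\le\|\bQ_N-\bQ\|_{\bL^2}\|\Phi_N\|_{L_2(U_0;\bL^2)}+\|\bQ\|_{\bL^2}\|\Phi_N-\Phi\|_{L_2(U_0;\bL^2)}$ gives the claim, in particular convergence in probability.

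In the second step I would apply the stability theorem for stochastic integrals (see e.g. \cite{BFH18}): since $\bar W_N\to\bar W$ a.s. in $C([0,T];U)$, and since for each $N$ the integrand $\bQ_N^{\top}\Phi_N$ is progressively measurable with respect to a filtration $(\bar\sF^N_t)_{t\ge0}$ for which $\bar W_N$ is a Wiener process --- this is where the representation maps $\phi_N$ of Theorem~\ref{skorohod}, the identity \eqref{newrv}, the equality of laws $\bar\bP\circ\phi_N^{-1}=\bP$, the definition \eqref{Ft}, and the adaptedness of the prelimit objects on $\Omega$ all enter --- the first step yields $\int_0^{\,\cdot}(\Phi_N\,d\bar W_N,\bQ_N)\to\int_0^{\,\cdot}(\Phi\,d\bar W,\bQ)$ in probability in $C([0,T];\R)$; the limiting integral is meaningful because $\bQ$ is $(\bar\sF_t)$-adapted (by the Carath\'eodory construction above) and $(\bar\bu,\bar\bv,\bar\bfeta^*)$ is $(\bar\sF_t)$-progressively measurable by Lemma~\ref{representative} and \eqref{Ft1}.

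Finally, to pass from convergence in probability to convergence in $L^1(\bar\Omega;L^1(0,T;\R))$ I would establish uniform integrability on $\bar\Omega\times(0,T)$. Using the Burkholder--Davis--Gundy inequality, the uniform essential bound on $\bQ_N$ in $L^\infty(0,T;\bH^1(\sO))$ (which follows from the construction of $\bQ_N$ and the deterministic bounds \eqref{AS}, \eqref{boundJ}), the growth bound \eqref{growthG}, and Theorem~\ref{energythm} (which continues to hold for the barred variables), one obtains $\bar\bE\int_0^T\bigl|\int_0^t(\Phi_N\,d\bar W_N,\bQ_N)\bigr|^2dt\le C\,\bar\bE\int_0^T\|\Phi_N\|_{L_2(U_0;\bL^2)}^2\,ds\le C$ with $C$ independent of $N$; hence the family $\{\int_0^\cdot(\Phi_N\,d\bar W_N,\bQ_N)\}_N$ is bounded in $L^2(\bar\Omega;L^2(0,T;\R))$, therefore uniformly integrable, and Vitali's convergence theorem gives the desired $L^1$ convergence. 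The main obstacle is the second step: verifying, through the maps $\phi_N$ and the definitions \eqref{Ft}--\eqref{Ft1}, the filtration and adaptedness hypotheses of the stochastic-integral stability lemma, with the intricate $\omega$-dependence of the test functions $\bQ_N$ making the bookkeeping somewhat delicate.
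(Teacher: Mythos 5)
Your proposal is correct and follows essentially the same route as the paper: first, almost sure convergence of the integrands $(G(\bar\bu_N,\bar\bv_N,\bar\bfeta^*_N),\bQ_N)$ in $L^2(0,T;L_2(U_0;\R))$ via the Lipschitz/growth assumptions \eqref{growthG}, the deterministic bound on $\bar\bfeta^*_N$, and the Skorohod a.s. convergences; then the classical stability lemma for stochastic integrals (the paper cites Bensoussan and Lemma 2.1 of \cite{DGHT}) to obtain convergence in probability; and finally uniform second-moment bounds (It\^o isometry in the paper, BDG in your write-up) plus Vitali to upgrade to $L^1(\bar\Omega;L^1(0,T;\R))$. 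The differences (choice of reference for the stability lemma, $C([0,T];\R)$ versus $L^2(0,T;\R)$ as the ambient space for the convergence in probability) are cosmetic.
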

		\begin{proof}
			First, by using \eqref{growthG} we observe that for $\frac32<s<2$ we have
			\begin{align*}
				&	\int_0^T\|(G(\hat{\bu}_{\ep},\hat\bv_\ep,\hat{\bfeta}^*_{\ep}),\bQ_{\ep})-( G(\hat \bu,\hat\bv,\hat{\bfeta}^*),\bQ)\|^2_{L_2(
					U_0,\R)}d s\\
				&\leq \int_0^T \|(G(\hat{\bu}_{\ep},\hat\bv_\ep,\hat{{\bfeta}}^*_{\ep})-G(\hat{\bu},\hat\bv,\hat{{\bfeta}}^*_{\ep}),\bQ_{\ep})\|^2_{L_2(U_0;\mathbb{R})} + \int_0^T\|(G(\hat{\bu},\hat\bv,\hat{{\bfeta}}^*_{\ep}),\bQ_{\ep}-\bQ)\|^2_{L_2(U_0;\mathbb{R})}\\
				&\qquad +\int_0^T\|(G(\hat\bu,\hat\bv,\hat{\bfeta}^*_{\ep})-G(\hat\bu,\hat\bv,\hat{\bfeta}^*),\bQ)\|^2_{L_2(U_0;\mathbb{R})}\\
				&\le \int_0^T\left( \|\hat{\bfeta}^*_{\ep}\|^2_{\bH^s(0,L)}\|\hat{\bu}_{\ep}-\hat\bu\|^2_{\bL^2(\sO)}+\|\hat\bv_{\ep}-\hat\bv\|_{\bL^2(0,L)}^2\right) \|\bQ_{\ep}\|_{\bL^2}^2\\
				&+  \int_0^T\left( \|\hat{\bfeta}^*_{\ep}\|^2_{\bH^s(0,L)}\|\hat\bu\|^2_{\bL^2(\sO)}+\|\hat\bv\|_{\bL^2(0,L)}^2\right) \|\bQ_{\ep}-\bQ\|_{\bL^2}^2+ \int_0^T\|\hat{\bfeta}^*_{\ep}-\hat{\bfeta}^*\|^2_{\bL^\infty(0,L)}\|\hat{\bu}\|^2_{\bL^2(\sO)}\|\bQ\|_{\bL^2}^2\\
				&\leq \frac1{\delta}\|\bQ_{\ep}\|^2_{L^2(0,T;\bL^2)}\|\hat\bu_{\ep}-\hat\bu\|^2_{L^2(0,T;\bL^2(\sO))}\\
				&+\frac1{\delta}\|\bQ_{\ep}-\bQ\|^2_{L^2(0,T;\bL^2)}\left( \|\hat{\bu}\|_{L^2(0,T;\bL^2(\sO))}^2 +\|\hat\bv\|^2_{L^2(0,T;\bL^2(0,L))}\right) \\
				&+ \|\hat{\bfeta}^*_{\ep}-\hat{\bfeta}^*\|_{L^\infty(0,T;\bH^s(0,L))}\|\hat{\bu}\|^2_{L^2(0,T;\bL^2(\sO))}\|\bQ\|_{L^2(0,T;\bL^2)}^2.
			\end{align*}	
			Thanks to Theorem \ref{skorohod2}, \eqref{convq} and \eqref{convpsi} the right hand side of the inequality above converges to 0, $\hat\bP$-a.s. as $\ep \to 0$.	Hence we have proven that 
			\begin{align}\label{g1}
				(G(\hat{\bu}_{\ep},\hat\bv_\ep,\hat{\bfeta}^*_{\ep}),\bQ_{\ep})\rightarrow( G(\hat \bu,\hat\bv,\hat{\bfeta}^*),\bQ), \qquad \text{$\hat \bP$-a.s \quad in $L^2(0,T;L_2(U_0,\R		)).$ }\end{align}
			Now using classical ideas from \cite{Ben} (see also Lemma 2.1 of \cite{DGHT} for a proof), the convergence \eqref{g1} implies that
			\begin{align}\label{G2}
				\int_0^t(G(\hat{\bu}_{\ep},\hat\bv_\ep,\hat{\bfeta}^*_{\ep})d\hat W_\ep,\bQ_{\ep}) \rightarrow \int_0^t( G(\hat \bu,\hat\bv,\hat{\bfeta}^*)d\hat{W},\bQ)
			\end{align}
			in probability in $L^2(0,T;\R)$.
			
			Furthermore observe that for some $C>0$ independent of $\ep$ we have the following bounds which follow from the It\^{o} isometry:
			\begin{align}
				\hat \bE\int_0^T |\int_0^t (G(\hat{\bu}_{\ep},\hat\bv_\ep,\hat{\bfeta}^*_{\ep})d\hat W_\ep(s),\bQ_{\ep})&|^2d t=\int_0^T\hat \bE\int_0^t\|(G(\hat{\bu}_{\ep},\hat\bv_\ep,\hat{\bfeta}^*_{\ep}),\bQ_{\ep})\|^2_{L_2(U_0,\R)}d s d t \notag\\
				& \le  T
				\hat \bE\left(\int_0^T\left( \|{\hat{\bfeta}^*_{\ep}}\|_{\bH^s(0,L)}^2\|\hat{\bu}_{\ep}\|^2_{\bL^2(\sO)}+\|\hat\bv_{\ep}\|^2_{\bL^2(0,L)}\right) d s\right)\label{G3}\\
				&\le   C(\delta).\notag
			\end{align}
			Here we also used the fact that $\|\bq_\ep\|_{L^\infty(0,T;\bL^2(\sO))} \leq C(\delta)(\|\bq\|_{L^\infty(0,T;\bL^2(\sO))}+\|\bfpsi\|_{L^\infty(0,T;\bL^2(0,L))})$ a.s.
			Combining \eqref{G2}, \eqref{G3} and using the Vitali convergence theorem, we thus conclude the proof of Lemma \ref{conv_G}.
			
		\end{proof}			
%The rest of the convergence results follows as in \cite{TC23} and the only term requiring explanation is the boundary integral. Observe that, due to the embedding $H^{\frac14}(\Gamma)\hookrightarrow L^4(\Gamma)$, Theorem \ref{skorohod2} and \eqref{convq}, $\hat\bu_\ep\cdot\bq_\ep$ converges to $\hat\bu\cdot\bq$ in $L^2(0,T;L^2(\Gamma))$. Combining this with \eqref{vweak2} and \eqref{convrest1}$_5$, we obtain the convergence of $\int_0^t\int_\Gamma S_{\hat\bfeta^*_\ep}(\hat\bv_\ep^*\cdot\hat\bn_\ep^*)(\hat\bu_\ep\cdot\bq_\ep)$ a.s. This completes the proof of Theorem \ref{exist2}
	
	%Next we will approximate $(\bq,\bfpsi)$ that satisfy $\bq|_{\Gamma}\cdot\bn^{\hat\bfeta^*}
		
	\end{proof}
	
	{\s Notice that in the statement of Theorem~\ref{exist2} we still have the function $\hat{\bfeta}^*(t)$ in the weak formulation, which keeps the displacement uniformly bounded. 
		We now show that in fact $\hat{\bfeta}^*(t)$ can be replaced by the limiting stochastic process $\hat{\bfeta}(t)$ to obtain the desired weak formulation 
		and martingale solution until some stopping time 
		$T^{\bfeta}$, which we show is strictly greater than zero almost surely.
		
		\begin{lem} \label{StoppingTime}{\bf{(Stopping time.)}}
			Let the deterministic initial data ${\bfeta}_0$ satisfy the assumptions \eqref{etainitial}.
			Then, for any $\delta =(\delta_1,\delta_2)$ satisfying \eqref{delta} and for a given $\frac32<s<2$,
			there exists an almost surely positive stopping time $T^{\bfeta}$, given by
			\begin{equation}\label{stoppingT}
				T^{\bfeta}:=T\wedge\inf\{t>0:\inf_{\sO}  J_{\hat\bfeta}(t)\leq \delta_1\} \wedge\inf\{t>0:\|\hat{\bfeta}(t)\|_{\bH^s(0,L)}\geq \frac1{\delta_2} \},
			\end{equation}
			such that
			\begin{align}\label{etasequal2}
				\hat	{\bfeta}^*(t)=\hat{\bfeta}(t) \quad \text{for } t<T^{\bfeta}.
			\end{align}
		\end{lem}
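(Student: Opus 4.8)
The plan is to verify the three assertions packed into the statement: that the random time $T^{\bfeta}$ of \eqref{stoppingT} is an $(\bar\sF_t)_{t\ge0}$-stopping time, that $T^{\bfeta}>0$ almost surely, and that $\bar{\bfeta}^*(t)=\bar{\bfeta}(t)$ for every $t<T^{\bfeta}$. The first two will follow from the pathwise regularity $\bar{\bfeta}\in C([0,T];\bH^s(0,L))$ furnished by Theorem \ref{skorohod} and \eqref{etaunif1}, from the continuity of $\bfeta\mapsto J_{\bfeta}$ already exploited in deriving \eqref{convvarphi}--\eqref{convrest}, and from the deterministic initial condition \eqref{etainitial}. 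The third is the limiting counterpart of the deterministic freezing rule \eqref{theta}--\eqref{eta*} and has essentially been carried out in the discussion preceding the statement; I would only tidy it up.

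For the stopping-time property and positivity I would set $g(t):=\inf_{\sO}J_{\bar{\bfeta}}(t)$ and $h(t):=\|\bar{\bfeta}(t)\|_{\bH^s(0,L)}$, so that $T^{\bfeta}=T\wedge\inf\{t>0:g(t)\le\delta\}\wedge\inf\{t>0:h(t)\ge1/\delta\}$. On the interval where $h<1/\delta$ the non-degeneracy condition \eqref{delta} holds, hence there $\varphi_{\bfeta}^{-1}$ is well defined and, for $s>3/2$, the map $\bfeta\mapsto\varphi_{\bfeta}^{-1}$ is continuous from the ball $\{\|\bfeta\|_{\bH^s(0,L)}\le1/\delta\}$ into $C^1([0,L])$ — this is the estimate behind \eqref{convvarphi} — so that $\bfeta\mapsto J_{\bfeta}=1+\eta_r\circ\varphi_{\bfeta}^{-1}$ (see \eqref{jacobian}) is continuous into $C(\bar\sO)$; composed with $\bar{\bfeta}\in C([0,T];\bH^s(0,L))$ this makes $t\mapsto g(t)$, and likewise $t\mapsto h(t)$, have continuous paths, and both are $(\bar\sF_t)_{t\ge0}$-adapted because $\bar{\bfeta}$ is, by the construction \eqref{Ft1} of the filtration. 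Since $(\bar\sF_t)_{t\ge0}$ is complete and right-continuous, $T^{\bfeta}$ is the first exit time of a continuous adapted process from an open set, hence a stopping time. Positivity is immediate from $\bar{\bfeta}(0)=\bfeta_0$: \eqref{etainitial} places $g(0)=\inf_{\sO}J_{\bfeta_0}$ and $h(0)=\|\bfeta_0\|_{\bH^s(0,L)}$ strictly inside the admissible region cut out by $\delta$, so the a.s. continuity of $g,h$ at $0$ produces an a.s. positive $\varepsilon(\omega)$ on which $g>\delta$ and $h<1/\delta$, whence $T^{\bfeta}(\omega)\ge\varepsilon(\omega)>0$.

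For the identification \eqref{etasequal2}, recall from \eqref{theta}--\eqref{eta*} that the discrete artificial displacement is the frozen sequence $\bfeta^n_*=\bfeta^{\min(n,n^*)}$, where $n^*(\omega):=\max\{n\le N:\theta_\delta(\bfeta^n(\omega))=1\}$ is the last step up to which $\inf_{\sO}J^k>\delta_1$ and $\|\bfeta^k\|_{\bH^s}<1/\delta_2$ for all $k\le n$; pushing forward under $\phi_N$, the piecewise-constant interpolants therefore satisfy $\bar{\bfeta}^*_N(t)=\bar{\bfeta}_N(t)$ for every $t$ before the first time $T^{\bfeta}_N$ at which the true discrete displacement leaves the admissible set, $T^{\bfeta}_N:=T\wedge\inf\{t>0:\inf_{\sO}\bar J_N(t)\le\delta_1\ \text{or}\ \|\bar{\bfeta}_N(t)\|_{\bH^s(0,L)}\ge1/\delta_2\}$, $\bar J_N(t):=\det\nabla A_{\bar{\bfeta}_N(t)}$. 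Now fix $\omega$ off a null set and $t<T^{\bfeta}(\omega)$; on the compact interval $[0,t]$ the continuous quantities $g,h$ stay strictly inside the admissible region, hence with a uniform gap, so by the uniform-in-time convergences $\bar{\bfeta}_N\to\bar{\bfeta}$ in $L^\infty(0,T;\bC^1[0,L])$ from \eqref{etaunif} and the attendant $\bar J_N\to J_{\bar{\bfeta}}$ in $L^\infty(0,T;C(\bar\sO))$ (obtained exactly as in \eqref{convrest}), the true discrete displacement $\bar{\bfeta}_N$ still satisfies $\inf_{\sO}\bar J_N>\delta_1$ and $\|\bar{\bfeta}_N\|_{\bH^s(0,L)}<1/\delta_2$ throughout $[0,t]$ for all $N$ large, i.e. $t<T^{\bfeta}_N$; in particular $T^{\bfeta}\le\liminf_N T^{\bfeta}_N$ a.s. For such $N$ the freezing in \eqref{eta*} has not activated on $[0,t]$, so $\bar{\bfeta}^*_N(t)=\bar{\bfeta}_N(t)$, and hence
\[
\|\bar{\bfeta}(t)-\bar{\bfeta}^*(t)\|_{\bH^s(0,L)}\le\|\bar{\bfeta}(t)-\bar{\bfeta}_N(t)\|_{\bH^s(0,L)}+\|\bar{\bfeta}^*_N(t)-\bar{\bfeta}^*(t)\|_{\bH^s(0,L)},
\]
whose right-hand side tends to $0$ as $N\to\infty$ by \eqref{etaunif1}. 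This yields $\bar{\bfeta}^*(t)=\bar{\bfeta}(t)$ for $t<T^{\bfeta}$, which is \eqref{etasequal2}.

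The step I expect to be the main obstacle is the identification: one must transport the purely discrete, deterministic freezing mechanism of \eqref{eta*} through the double passage to the limit — the Skorohod change of probability space, then $N\to\infty$ — across a \emph{random} time interval, and the argument closes only because the ALE Jacobian converges \emph{uniformly in $t$}, not merely weakly or in an $L^2_t$ sense, which is exactly what \eqref{etaunif} together with the continuity of $\bfeta\mapsto A_{\bfeta}$ deliver, and because the forbidden region is closed, so that strict admissibility on $[0,t]$ is stable under small perturbations. A secondary point to handle with care is that $J_{\bfeta}$, and hence $\bar J_N$ and $T^{\bfeta}_N$, are meaningful only while \eqref{delta} holds, so all of the above should be read on the stopped processes, where the uniform deterministic bounds \eqref{AS}--\eqref{boundJ} are in force.
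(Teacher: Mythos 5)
Your proposal is correct, but it splits from the paper in one place. The identification $\bar{\bfeta}^*(t)=\bar{\bfeta}(t)$ for $t<T^{\bfeta}$ is essentially the paper's own argument (which it gives just before Theorem \ref{exist2} and then invokes in the lemma): introduce the discrete exit times $T^{\bfeta}_N$, note that the freezing in \eqref{theta}--\eqref{eta*} has not activated before $T^{\bfeta}_N$ so $\bar{\bfeta}^*_N=\bar{\bfeta}_N$ there, use the uniform-in-time convergences to get $T^{\bfeta}\le\liminf_N T^{\bfeta}_N$ a.s., and close with the triangle inequality; your two-term version is the paper's three-term version. Where you genuinely diverge is the a.s.\ positivity of $T^{\bfeta}$: the paper proves it quantitatively, showing $\bar\bP[T^{\bfeta}_2=0,\ \|\bfeta_0\|_{\bH^2}<1/\delta_0]=0$ and $\bar\bP[T^{\bfeta}_1=0,\ \inf_\sO J_0>\delta_0]=0$ via Chebyshev, the interpolation $\|\cdot\|_{\bH^s}\lesssim\|\cdot\|_{\bL^2}^{1-s/2}\|\cdot\|_{\bH^2}^{s/2}$, and the bound $\|\bar{\bfeta}(t)-\bfeta_0\|_{\bL^2}\le t\,\sup_s\|\bar\bv(s)\|_{\bL^2}$ coming from $\partial_t\bar{\bfeta}=\bar\bv$, whereas you argue softly from $\bar{\bfeta}\in C([0,T];\bH^s)$ a.s., $\bar{\bfeta}(0)=\bfeta_0$, and the strict inequalities in \eqref{etainitial}, so that the continuous paths of $\inf_\sO J_{\bar{\bfeta}}$ and $\|\bar{\bfeta}\|_{\bH^s}$ cannot leave the open admissible region instantaneously. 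Your route is shorter and also covers the stopping-time property (first hitting time of a closed set by a continuous adapted process, which the paper does not spell out); the paper's route is more robust, since it needs only the energy-type bounds on $\bar\bv$ and $\bar{\bfeta}$ rather than the $C([0,T];\bH^s)$ path regularity delivered by the Skorohod step, and it would survive, e.g., random initial data through the conditioning on $\{\inf_\sO J_0>\delta_0\}$, $\{\|\bfeta_0\|_{\bH^2}<1/\delta_0\}$. One small imprecision: in the "uniform gap" step you cite \eqref{etaunif} (the $\bC^1$ convergence) to keep $\|\bar{\bfeta}_N\|_{\bH^s}<1/\delta_2$ on $[0,t]$; the $\bH^s$ control actually requires the uniform convergence \eqref{etaunif1} in $L^\infty(0,T;\bH^s(0,L))$, which you do invoke in the final triangle inequality, so this is a matter of citation rather than a gap.
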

		
		\begin{proof}
			To prove this lemma we write the stopping time as 
			$$T^{\bfeta} = T\wedge T^{\bfeta}_1+T^{\bfeta}_2, $$
			where $T^{\bfeta}_1$ and $T^{\bfeta}_2$ are defined by:
			$$	
			T^{\bfeta}=T\wedge\inf\{t>0:\inf_{\sO}  J_{\hat\bfeta}(t)\leq \delta_1\} \wedge\inf\{t>0:\|\hat{\bfeta}(t)\|_{\bH^s(0,L)}\geq \frac1{\delta_2} \}=:T\wedge T^{\bfeta}_1+T^{\bfeta}_2.
			$$
			
			We start with $T^{\bfeta}_2$. Observe that using the triangle inequality, for any $\delta_0>{\delta_2}$, we obtain
			\begin{align*}
				\hat\bP[T^{\bfeta}_2=0, \|{\bfeta}_0\|_{\bH^2(0,L)}&<\frac1{\delta_0}] =\lim_{\epsilon\rightarrow 0}\hat\bP[T^{\bfeta}_2<\epsilon,\|{\bfeta}_0\|_{\bH^2(0,L)}<\frac1{\delta_0}]\\
				&\leq \limsup_{\epsilon \rightarrow 0^+}\hat\bP[\sup_{t\in[0,\epsilon)}\|\hat{\bfeta}(t)\|_{\bH^s(0,L)}>\frac1{\delta_2},\|{\bfeta}_0\|_{\bH^2(0,L)}<\frac1{\delta_0}]\\
				&\leq \limsup_{\epsilon \rightarrow 0^+}\hat\bP[\sup_{t\in[0,\epsilon)}\|\hat{\bfeta}(t)-{\bfeta}_0\|_{\bH^s(0,L)}>\frac1{\delta_2}-\frac1{\delta_0}] \\
				&\leq \frac1{(\frac1{\delta_2}-\frac1{\delta_0})}\limsup_{\epsilon \to 0}\hat\bE[\sup_{t\in[0,\epsilon)}\|\hat{\bfeta}(t)-{\bfeta}_0\|_{\bH^s(0,L)}]\\
				&\leq \frac1{(\frac1{\delta_2}-\frac1{\delta_0})}\limsup_{\epsilon\rightarrow 0}\hat\bE[\sup_{t\in[0,\epsilon)}\|\hat{\bfeta}(t)-{\bfeta}_0\|^{1-\frac{s}2}_{\bL^2(0,L)}\|\hat{\bfeta}(t)-{\bfeta}_0\|^{\frac{s}2}_{\bH^2(0,L)}]\\
				&\leq \frac1{(\frac1{\delta_2}-\frac1{\delta_0})}\limsup_{\epsilon\rightarrow 0}\hat\bE[\sup_{t\in[0,\epsilon)}\epsilon\|\hat\bv(t)\|^{1-\frac{s}{2}}_{\bL^2(0,L)}\|\hat{\bfeta}(t)-{\bfeta}_0\|^{\frac{s}{2}}_{\bH^2(0,L)}]\\
				&\leq \limsup_{\epsilon\rightarrow 0} \frac{\epsilon}{(\frac1{\delta_2}-\frac1{\delta_0})}\left( \hat\bE[\sup_{t\in[0,\epsilon)}\|\hat\bv(t)\|^{2}_{\bL^2(0,L)}]\right)^{\frac{2-s}4} \left( \hat\bE[\sup_{t\in(0,\epsilon)}\|\hat{\bfeta}(t)-{\bfeta}_0\|^{2}_{\bH^2(0,L)}]\right)^{\frac{s}4} \\
				&=0.
			\end{align*}
			Hence, by continuity from below, we deduce that for any $\delta_2>0$, 
			\begin{align}
				\hat\bP[T^{\bfeta}_2=0, \|{\bfeta}_0\|_{\bH^2(0,L)}<\frac1{\delta_2}]=0.
			\end{align}
			
			To estimate $T^{\bfeta}_1$ we observe that, similarly, since for any $t\in[0,T]$ we have that $\inf_{\sO}  J_{\hat\bfeta}(t)\geq \inf_{\sO} \hat J_0-\| J_{\hat\bfeta}(t)-J_0\|_{C(\sO)}$, we write for any $\delta_0>\delta_1$ 
			\begin{align*}
				\hat\bP[T^{\bfeta}_1=0, \inf_{\sO}J_0>\delta_0] &\leq \limsup_{\epsilon \rightarrow 0^+}\hat\bP[\inf_{t\in[0,\epsilon)}\inf_{\bar\sO}  J_{\hat\bfeta}(t)<\delta_1,\inf_{\sO} \hat J_0>\delta_0]\\
				&\leq \limsup_{\epsilon \rightarrow 0^+}\hat\bP[\sup_{t\in[0,\epsilon)}\| J_{\hat\bfeta}(t)-J_0\|_{C(\sO)}>\delta_0-\delta_1] \\
				&\leq \frac1{(\delta_0-\delta_1)^2}\limsup_{\epsilon\rightarrow 0}\hat\bE[\sup_{t\in[0,\epsilon)}\| J_{\hat\bfeta}(t)-J_0\|^2_{C(\sO)}]\\
				&=0.
			\end{align*}
			Hence for given $\delta_1>0$, 
			\begin{align}
				\hat\bP[T^{\bfeta}_1=0, \inf_{\sO}J_0>\delta_1]=0.
			\end{align}
			That is we have,
			\begin{align}
				\hat\bP[T^{\bfeta}=0, \inf_{\sO}J_0>\delta_1, \|{\bfeta}_0\|_{\bH^2(0,L)}<\frac1{\delta_2}]=0.
			\end{align}
			
		\end{proof}
	}

	Thus, by combining Theorem \ref{exist2}, Lemma~\ref{StoppingTime}, and  \eqref{etasequal2}, we finish the proof of the main result:
	
			\begin{thm}\label{MainTheorem}{\bf{(Main result.)}}
			For any given $\delta=(\delta_1,\delta_2)$, where $\delta_1>0$ and $\delta_2$ satisfies \eqref{delta}, if the deterministic initial data  ${\bfeta}_0$ satisfies \eqref{etainitial}, then the stochastic processes $(\hat\bu,\hat{\bfeta},T^{\bfeta})$
			obtained in the limit specified in Theorem~\ref{skorohod2},  along with the stochastic basis constructed in Theorem \ref{skorohod2}, 
			determine a martingale solution 
			in the sense of Definition \ref{def:martingale} of the stochastic FSI problem \eqref{u}-\eqref{ic}, with the stochastic forcing given by \eqref{StochasticForcing}.\end{thm}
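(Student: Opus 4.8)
The statement is a synthesis of the results already in place. The plan is to read off, one by one, the structural requirements (1)--(4) of Definition~\ref{def:martingale} for the tuple $(\mathscr{S},\bar\bu,\bar{\bfeta},T^{\bfeta})$ — where $\mathscr{S}=(\bar\Omega,\bar\sF,(\bar\sF_t)_{t\geq 0},\bar\bP,\bar W)$ is the stochastic basis produced in Theorem~\ref{skorohod} equipped with the completed right‑continuous filtration \eqref{Ft1}, and $T^{\bfeta}$ is the stopping time of Lemma~\ref{StoppingTime} — and then to establish the weak identity \eqref{weaksol}, requirement (5), by restricting the identity \eqref{martingale2} of Theorem~\ref{exist2} to the interval $[0,T^{\bfeta})$ and substituting $\bar{\bfeta}$ for the artificial displacement $\bar{\bfeta}^*$, which is legitimate by Lemma~\ref{StoppingTime} and \eqref{etasequal2}.

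For the structural conditions: the filtration \eqref{Ft1} satisfies the usual conditions by construction, and $\bar W$ is a $U$‑valued $(\bar\sF_t)$‑Wiener process because the approximations $\bar W_N$ have, via the identification \eqref{newrv}, increments past any time $s$ that are independent of the $\sigma$‑field generated by the approximate unknowns up to time $s$, a property preserved in the limit (this is the standard argument of the Skorohod‑representation approach, cf.\ \cite{BFH18}); this gives requirement (1). Requirement (2), $(\bar\bu,\bar{\bfeta})\in\sW(0,T)$, follows from the uniform bounds collected in \eqref{sols1}; the transformed divergence‑free constraint is obtained by passing to the limit in the penalty bound of Lemma~\ref{bounds}(5), which forces $\text{div}^{\bar{\bfeta}^*_N}\bar\bu^+_N\to 0$ in $L^2(\bar\Omega;L^2(0,T;L^2(\sO)))$; combining this with the weak convergence \eqref{uweak} of $\bar\bu^+_N$ in $L^2(0,T;\bH^1(\sO))$ and the strong convergence $(\nabla A_{\bar{\bfeta}^*_N})^{-1}\to(\nabla A_{\bar{\bfeta}^*})^{-1}$ of \eqref{convrest} yields $\text{div}^{\bar{\bfeta}^*}\bar\bu=0$, which on $[0,T^{\bfeta})$ reads $\nabla^{\bar{\bfeta}}\cdot\bar\bu=0$ by \eqref{etasequal2}; the kinematic coupling $\bar\bu|_\Gamma=\partial_t\bar{\bfeta}$ passes to the limit from the scheme as in Theorem~\ref{exist2}. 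Requirement (3) is exactly the $\bar\bP$‑a.s.\ strict positivity of $T^{\bfeta}$ furnished by Lemma~\ref{StoppingTime}, and requirement (4) follows from Lemma~\ref{representative} together with $\partial_t\bar{\bfeta}=\bar\bv$ and the progressive measurability of the continuous adapted process $\bar{\bfeta}$.

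For requirement (5), fix $t<T^{\bfeta}$ in \eqref{martingale2}. By \eqref{etasequal2} every occurrence of $\bar{\bfeta}^*$ — inside $J_{\bar{\bfeta}^*}$, $\nabla^{\bar{\bfeta}^*}$, $\bD^{\bar{\bfeta}^*}$, $\bar\bw^*$, inside the noise coefficient $G(\bar\bu,\bar\bv,\bar{\bfeta}^*)$, and inside the constraint $\nabla^{\bar{\bfeta}^*}\cdot\bq=0$ on the test functions — may be replaced by $\bar{\bfeta}$; in particular the class of admissible test processes for \eqref{martingale2} on $[0,T^{\bfeta})$ coincides with that of Definition~\ref{def:martingale}(5). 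The ALE advection and transport contributions of \eqref{martingale2} are then reorganised, using the definition of the trilinear form $b^{\bar{\bfeta}}$ from Section~\ref{sec:ale}, the geometric conservation law $\partial_t J_{\bar{\bfeta}}=J_{\bar{\bfeta}}\,(\nabla^{\bar{\bfeta}}\cdot\bar\bw)$, and the incompressibility constraint, into the skew‑symmetric combination $-\int_0^t b^{\bar{\bfeta}}(\bar\bu,\bar\bu,\bq)+\int_0^t\int_{\sO}J_{\bar{\bfeta}}\,(\nabla^{\bar{\bfeta}}\cdot\bar\bw)\,\bar\bu\cdot\bq$ that appears in \eqref{weaksol}. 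Identifying $\partial_t\bar{\bfeta}=\bar\bv$ in the structure and noise terms, and recalling from Lemma~\ref{conv_G} that the limiting stochastic term is the genuine It\^o integral $\int_0^t(\bQ,G(\bar\bu,\partial_t\bar{\bfeta},\bar{\bfeta})\,d\bar W)$, the right‑hand side of \eqref{martingale2} becomes term‑by‑term the right‑hand side of \eqref{weaksol}; the initial terms already match since $J_0=\det\nabla A_{{\bfeta}_0}$ is determined by ${\bfeta}_0$. Thus \eqref{weaksol} holds $\bar\bP$‑a.s.\ for a.e.\ $t\in[0,T^{\bfeta})$ and for every admissible test process, completing the verification of Definition~\ref{def:martingale}.

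The substantive difficulties have all been discharged earlier — the geometric limit and the construction of adapted, divergence‑free approximate test functions in Theorem~\ref{exist2}, and the almost sure positivity of $T^{\bfeta}$ in Lemma~\ref{StoppingTime} — so in this concluding assembly the only point that genuinely requires care is the interchange between the $\bar{\bfeta}^*$‑ and $\bar{\bfeta}$‑formulations of both the weak equation and the constraint $\nabla^{\bullet}\cdot\bq=0$ on the test functions, which is precisely what the identity \eqref{etasequal2}, valid on $[0,T^{\bfeta})$, licenses.
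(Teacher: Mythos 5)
Your proposal is correct and follows essentially the same route as the paper, whose own proof is precisely the assembly you describe: take the weak identity \eqref{martingale2} from Theorem \ref{exist2}, invoke Lemma \ref{StoppingTime} for the almost sure positivity of $T^{\bfeta}$ and the identification $\bar{\bfeta}^*=\bar{\bfeta}$ on $[0,T^{\bfeta})$ via \eqref{etasequal2}, and substitute to recover \eqref{weaksol} in the sense of Definition \ref{def:martingale}. Your additional verification of the structural clauses (stochastic basis, regularity class, recovery of the divergence constraint from the penalty bound, progressive measurability via Lemma \ref{representative}) only makes explicit what the paper leaves implicit, so there is no substantive difference in approach.
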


	%\section*{Acknowledgment}

	\bibliography{stochfsi}

\newcommand{\noop}[1]{}
\begin{thebibliography}{10}

\bibitem{BH21}
A.~Behzadan and M.~Holst.
\newblock Multiplication in {S}obolev spaces, revisited.
\newblock {\em Ark. Mat.}, 59(2):275--306, 2021.

\bibitem{Ben}
A.~Bensoussan.
\newblock Stochastic {N}avier-{S}tokes equations.
\newblock {\em Acta Appl. Math.}, 38(3):267--304, 1995.

\bibitem{BFH18}
D.~Breit, E.~Feireisl, and M.~Hofmanov\'{a}.
\newblock {\em Stochastically forced compressible fluid flows}, volume~3 of
  {\em De Gruyter Series in Applied and Numerical Mathematics}.
\newblock De Gruyter, Berlin, 2018.

\bibitem{BO13}
Z.~Brze\'{z}niak and M.~Ondrej\'{a}t.
\newblock Stochastic geometric wave equations with values in compact
  {R}iemannian homogeneous spaces.
\newblock {\em Ann. Probab.}, 41(3B):1938--1977, 2013.

\bibitem{CDEG}
A.~Chambolle, B.~Desjardins, M.~J. Esteban, and C.~Grandmont.
\newblock Existence of weak solutions for the unsteady interaction of a viscous
  fluid with an elastic plate.
\newblock {\em J. Math. Fluid Mech.}, 7(3):368--404, 2005.

\bibitem{C88}
P.~Ciarlet.
\newblock {\em Mathematical elasticity, Vol. I}.
\newblock volume 20 of Studies in Mathematics and its Applications.
  North-Holland Publishing Co., Amsterdam, 1988.

\bibitem{DGHT}
A.~Debussche, N.~Glatt-Holtz, and R.~Temam.
\newblock Local martingale and pathwise solutions for an abstract fluids model.
\newblock {\em Phys. D}, 240(14-15):1123--1144, 2011.

\bibitem{D96}
Z.~Ding.
\newblock A proof of the trace theorem of {S}obolev spaces on {L}ipschitz
  domains.
\newblock {\em Proc. Amer. Math. Soc.}, 124(2):591--600, 1996.

\bibitem{Galdi}
G.~P. Galdi.
\newblock {\em An introduction to the mathematical theory of the
  {N}avier-{S}tokes equations}.
\newblock Springer Monographs in Mathematics. Springer, New York, second
  edition, 2011.
\newblock Steady-state problems.

\bibitem{GHH06}
M.~Gei\ss~ert, H.~Heck, and M.~Hieber.
\newblock On the equation {${\rm div}\,u=g$} and {B}ogovski\u{\i}'s operator in
  {S}obolev spaces of negative order.
\newblock In {\em Partial differential equations and functional analysis},
  volume 168 of {\em Oper. Theory Adv. Appl.}, pages 113--121. Birkh\"{a}user,
  Basel, 2006.

\bibitem{GT83}
D.~Gilbarg and N.~Trudinger.
\newblock {\em Elliptic partial differential equations of second order}, volume
  224 of {\em Grundlehren der mathematischen Wissenschaften [Fundamental
  Principles of Mathematical Sciences]}.
\newblock Springer-Verlag, Berlin, second edition, 1983.

\bibitem{G08}
C.~Grandmont.
\newblock Existence of weak solutions for the unsteady interaction of a viscous
  fluid with an elastic plate.
\newblock {\em SIAM J. Math. Anal.}, 40(2):716--737, 2008.

\bibitem{GH16}
C.~Grandmont and M.~Hillairet.
\newblock Existence of global strong solutions to a beam-fluid interaction
  system.
\newblock {\em Arch. Ration. Mech. Anal.}, 220(3):1283--1333, 2016.

\bibitem{G11}
P.~Grisvard.
\newblock {\em Elliptic problems in nonsmooth domains}, volume~69 of {\em
  Classics in Applied Mathematics}.
\newblock Society for Industrial and Applied Mathematics (SIAM), Philadelphia,
  PA, 2011.
\newblock Reprint of the 1985 original [MR0775683], With a foreword by Susanne
  C. Brenner.

\bibitem{KSS23}
M.~Kampschulte, S.~Schwarzacher, and G.~Sperone.
\newblock Unrestricted deformations of thin elastic structures interacting with
  fluids.
\newblock {\em J. Math. Pures Appl. (9)}, 173:96--148, 2023.

\bibitem{KC23}
J.~Kuan and S.~Canic.
\newblock Well-posedness of solutions to stochastic fluid-structure
  interaction.
\newblock {\em to appear}, 2023.

\bibitem{KT12}
I.~Kukavica and A.~Tuffaha.
\newblock Solutions to a fluid-structure interaction free boundary problem.
\newblock {\em Discrete Contin. Dyn. Syst.}, 32(4):1355--1389, 2012.

\bibitem{MC13}
B.~Muha and S.~\v{C}ani\'{c}.
\newblock Existence of a weak solution to a nonlinear fluid-structure
  interaction problem modeling the flow of an incompressible, viscous fluid in
  a cylinder with deformable walls.
\newblock {\em Arch. Ration. Mech. Anal.}, 207(3):919--968, 2013.

\bibitem{MC16}
B.~Muha and S.~\v{C}ani\'{c}.
\newblock Existence of a weak solution to a fluid-elastic structure interaction
  problem with the {N}avier slip boundary condition.
\newblock {\em J. Differential Equations}, 260(12):8550--8589, 2016.

\bibitem{MC19}
B.~Muha and S.~\v{C}ani\'{c}.
\newblock A generalization of the {A}ubin-{L}ions-{S}imon compactness lemma for
  problems on moving domains.
\newblock {\em J. Differential Equations}, 266(12):8370--8418, 2019.

\bibitem{OPW22}
M.~Ondrejat, A.~Prohl, and N.~Walkington.
\newblock Numerical approximation of nonlinear spde’s.
\newblock {\em Stochastics and Partial Differential Equations: Analysis and
  Computations}, 2022.

\bibitem{StochasticHeartFSI}
Z.~Qu, G.~Hu, A.~Garfinkel, and J.N. Weiss.
\newblock Nonlinear and stochastic dynamics in the heart.
\newblock {\em Phys Rep.}, 543(2):61--162, 2014.

\bibitem{TC23}
K.~Tawri and S.~\v{C}ani\'{c}.
\newblock Existence of martingale solutions to a nonlinearly coupled stochastic
  fluid-structure interaction problem.
\newblock {\em to appear}, 2023.
\newblock arXiv 2310.03961.

\bibitem{Tem95}
R.~Temam.
\newblock {\em Navier-{S}tokes equations and nonlinear functional analysis},
  volume~66 of {\em CBMS-NSF Regional Conference Series in Applied
  Mathematics}.
\newblock Society for Industrial and Applied Mathematics (SIAM), Philadelphia,
  PA, second edition, 1995.

\bibitem{VW96}
A.~van~der Vaart and J.~Wellner.
\newblock {\em Weak convergence and empirical processes}.
\newblock Springer Series in Statistics. Springer-Verlag, New York, 1996.
\newblock With applications to statistics.

\bibitem{MCG20}
S.~\v{C}ani\'{c}, M.~Gali\'{c}, and B.~Muha.
\newblock Analysis of a 3{D} nonlinear, moving boundary problem describing
  fluid-mesh-shell interaction.
\newblock {\em Trans. Amer. Math. Soc.}, 373(9):6621--6681, 2020.

\bibitem{V12}
I.~Vel\v{c}i\'{c}.
\newblock Nonlinear weakly curved rod by {$\Gamma$}-convergence.
\newblock {\em J. Elasticity}, 108(2):125--150, 2012.

\end{thebibliography}
	
	\bibliographystyle{plain}
\end{document}